\numberwithin{equation}{section}
\newcommand{\R}{\mathbb{R}}
\newcommand{\HC}{\mathcal{H}}
\newcommand{\E}{\mathbb{E}}
\newcommand{\PP}{\mathbb{P}}
\newcommand{\stepid}[1]{\mathbf{1}_{[0, #1]}}
\newcommand{\norm}[1]{\left\lVert#1\right\rVert}
\newcommand{\inner}[2]{\langle #1, #2 \rangle}
\newtheorem{theorem}{Theorem}[section]
\newtheorem{lemma}[theorem]{Lemma}
\newtheorem{proposition}[theorem]{Proposition}
\newtheorem{corollary}[theorem]{Corollary}
\theoremstyle{definition}
\newtheorem{definition}[theorem]{Definition}
\theoremstyle{remark}
\newtheorem{remark}[theorem]{Remark}
\newtheorem{assumption}{Assumption}
\numberwithin{equation}{section}
\begin{document}
\title[Berry-Esseen bounds for Gaussian process estimators]{Berry-Esseen bounds of second moment estimators for Gaussian processes observed at high frequency}
\author[S. Douissi]{Soukaina Douissi}
\address{National School of Applied Sciences, Marrakech, Morocco}
\email{s.douissi@uca.ma}
\author[K. Es-Sebaiy]{ Khalifa Es-Sebaiy}
\address{Department of Mathematics, Faculty of Science, Kuwait University, Kuwait}
\email{khalifa.essebaiy@ku.edu.kw}
\author[G. Kerchev]{George Kerchev}
\address{University of Luxembourg, Department of Mathematics, Luxembourg}
 \email{george.kerchev@uni.lu}
\author[I. Nourdin]{Ivan Nourdin}
\address{University of Luxembourg, Department of Mathematics, Luxembourg}
 \email{ivan.nourdin@uni.lu}

\begin{abstract} Let $Z \coloneqq \{Z_t,t\geq0\}$ be  a stationary Gaussian process. We study two estimators of $\E [Z_0^2]$, namely $\widehat{f}_T(Z)\coloneqq \frac{1}{T} \int_{0}^{T} Z_{t}^{2}dt$, and $\widetilde{f}_n(Z) \coloneqq \frac{1}{n} \sum_{i =1}^{n} Z_{t_{i}}^{2}$, where $ t_{i} = i \Delta_{n}$, $ i=0,1,\ldots, n $, $\Delta_{n}\rightarrow 0$ and  $ T_{n} \coloneqq n \Delta_{n}\rightarrow \infty$. We prove  that the two estimators are strongly consistent and establish Berry-Esseen bounds for a central limit theorem involving $\widehat{f}_T(Z)$ and $\widetilde{f}_n(Z)$. We apply these results to asymptotically stationary Gaussian  processes and estimate the drift parameter   for Gaussian Ornstein-Uhlenbeck processes.
\end{abstract}

\maketitle

\medskip\noindent
{\bf Mathematics Subject Classifications (2010)}: Primary 60F05;  Secondary: 60G15; 60G10; 62F12; 62M09.

\medskip\noindent
{\bf Keywords:} Parameter estimation, Strong
consistency, rate of normal convergence of the estimators,
stationary Gaussian processes, continuous-time observation, high
frequency data.

\allowdisplaybreaks

\renewcommand{\thefootnote}{\arabic{footnote}}

\section{Introduction}

 Statistical inference for stochastic processes is of great importance for theoreticians and practitioners alike. While for some processes like It\^o-type diffusions and semimartingales, there is extensive literature, the statistical analysis for fractional Gaussian processes is relatively recent.

In this paper, we are interested in the parametric estimation of the variance  of stationary Gaussian process which is not necessarily a semimartingale.  Let  $Z = \{Z_{t},   t \geq0 \}$ be a continuous centered stationary Gaussian process and $f_Z \coloneqq E(Z_0^2)>0$.   We  consider the following estimators of $f_Z$:
\begin{itemize}
\item When a complete path of the process over a large finite interval is observable, we use the estimator:
\begin{equation}\label{estim-cont-STAT}
\widehat{f}_T(Z) \coloneqq \frac{1}{T} \int_{0}^{T} Z_{t}^{2}dt,\quad T>0.
\end{equation}
\item   A more practical assumption is that the process $Z$ is observed at discrete time instants $t_i = i \Delta_n$, where $i = 0, \ldots, n$ and $\Delta_n$ is the step size.  Then we consider the following estimator over the \emph{observation window} $T_n \coloneqq n \Delta n$:
\begin{equation}\label{estim-disc-STAT}
\widetilde{f}_n(Z)  \coloneqq \frac{1}{n} \sum_{i =1}^{n} Z_{t_{i}}^{2},\quad n\geq1.
\end{equation}
\end{itemize}

These estimators are unbiased and we show that they are strongly consistent and admit a central limit theorem. Moreover, we bound the rate of convergence to the normal distribution in terms of total variation distance and Wasserstein distance. Recall that, for two random variables $X$ and $Y$, the former metrics are respectively given by
\begin{align}
\label{eq:def_tv} d_{TV}\left( X,Y\right) \coloneqq \sup_{A\in \mathcal{B}({\mathbb{R}})}\left\vert \PP\left[ X\in A\right] -\PP\left[ Y\in A\right] \right\vert,
\end{align}
where the supremum is over all Borel sets, and
\begin{align}
\label{eq:def_w} d_{W}\left( X,Y\right) \coloneqq \sup_{f\in Lip(1)}\left\vert \E [f(X)]-\E [f(Y)]\right\vert,
\end{align}
where $Lip(1)$ is the set of all Lipschitz functions with Lipschitz constant $\leqslant 1$.

Let $\rho(t) = \rho(-t) \coloneqq \E[Z_0 Z_t]$ for $ t \geq 0$. The central result for $\widehat{f}_T(Z)$, whose proof  follows the lines of the approach developed in \cite[Chapter 7]{NP-book}, is the following.
\begin{theorem}\label{CLT-hat-STAT}  Assume $ \int_\R \rho^2(r) dr  < \infty$. Let   $\mathcal{N} \sim\mathcal{N}(0,1)$ be the standard normal random variable. Then for all $T>0$,
\begin{align}
d_{TV}\left(\frac{\widehat{f}_T(Z) - f_Z}{\sqrt{ Var(\widehat{f}_T(Z) - f_Z)} }, \mathcal{N}\right) \leq \varphi_T(Z),\label{d_TV-of-hat{f}_T}
\end{align}
where
\begin{eqnarray}
\label{eq:varphi_bound}\varphi_T(Z) = C\max\left\{ \frac{8}{\sqrt{T}} \left(\int_{-T}^T |\rho(t)|^{3/2} dt\right)^2,  \frac{48}{T} \left(\int_{-T}^T |\rho(t)|^{4/3} dt\right)^3\right\},
\end{eqnarray}
for some absolute constant $C > 0$. The same result holds for the Wasserstein distance.
\end{theorem}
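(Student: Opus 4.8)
The plan is to recognise $\widehat{f}_T(Z)-f_Z$ as an element of the second Wiener chaos and then run the Malliavin--Stein scheme of \cite[Chapter 7]{NP-book}. Since $Z$ is centred Gaussian, realise it over an isonormal Gaussian process on a real Hilbert space $\HC$: write $Z_t=I_1(h_t)$ with $\inner{h_s}{h_t}_{\HC}=\rho(s-t)$, so that $\norm{h_t}_{\HC}^2=\rho(0)=f_Z$. The product formula for multiple integrals gives $Z_t^2-f_Z=I_2(h_t\otimes h_t)$, hence, integrating over $[0,T]$,
\begin{equation*}
\widehat{f}_T(Z)-f_Z=I_2(f_T),\qquad f_T:=\frac1T\int_0^T h_t\otimes h_t\,dt\in\HC^{\odot 2}.
\end{equation*}
Its variance is $\sigma_T^2:=Var(\widehat{f}_T(Z)-f_Z)=2\norm{f_T}_{\HC^{\otimes 2}}^2=\frac{2}{T^2}\int_{-T}^T(T-|u|)\rho^2(u)\,du$, which is finite by the hypothesis $\int_\R\rho^2<\infty$ and, since $\rho$ is continuous with $\rho(0)>0$, satisfies $\sigma_T^2\gtrsim 1/T$ for large $T$ --- a lower bound I will use at the end.

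Next I would invoke the quantitative CLT in the second chaos: for $F=I_2(g)$ with $\E[F^2]=\sigma^2$ one has
\begin{equation*}
d_{TV}\!\left(\frac{F}{\sigma},\mathcal{N}\right)\le C\max\left\{\frac{|\kappa_3(F)|}{\sigma^3},\ \frac{\kappa_4(F)}{\sigma^4}\right\},
\end{equation*}
and an estimate of the same form for $d_W$; this is the third/fourth cumulant bound obtained along the lines of \cite[Chapter 7]{NP-book}. Using the classical identities $\kappa_3(I_2(g))=8\inner{g}{g\otimes_1 g}_{\HC^{\otimes 2}}$ and $\kappa_4(I_2(g))=48\norm{g\otimes_1 g}_{\HC^{\otimes 2}}^2$ (valid for symmetric $g$, which $f_T$ is), together with $\kappa_j(F/\sigma)=\kappa_j(F)/\sigma^j$, the problem reduces to estimating the two contraction quantities $\inner{f_T}{f_T\otimes_1 f_T}$ and $\norm{f_T\otimes_1 f_T}$.

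The core of the argument is that both are \emph{cyclic} integrals of $\rho$:
\begin{align*}
\inner{f_T}{f_T\otimes_1 f_T}&=\frac{1}{T^3}\int_{[0,T]^3}\rho(s_1-s_2)\rho(s_2-s_3)\rho(s_3-s_1)\,ds_1\,ds_2\,ds_3,\\
\norm{f_T\otimes_1 f_T}^2&=\frac{1}{T^4}\int_{[0,T]^4}\rho(s_1-s_2)\rho(s_2-s_3)\rho(s_3-s_4)\rho(s_4-s_1)\,ds_1\cdots ds_4.
\end{align*}
For each, I would freeze one integration variable (gaining a factor $T$), pass to the difference variables, which all stay in $[-T,T]$ on the cube, replace $\rho$ there by $|\rho|\,\mathbf{1}_{[-T,T]}$, recognise the remaining integral as an $(m-1)$-fold convolution of $|\rho|\,\mathbf{1}_{[-T,T]}$ evaluated at $0$ (with $m=3$, resp. $m=4$), and apply the generalised Young inequality $\norm{g_1*\cdots*g_m}_{L^\infty}\le\prod_i\norm{g_i}_{L^{p_i}}$ under $\sum_i 1/p_i=m-1$, with the balanced choices $p_i\equiv 3/2$, resp. $p_i\equiv 4/3$. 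This yields
\begin{equation*}
\left|\inner{f_T}{f_T\otimes_1 f_T}\right|\le\frac{1}{T^2}\left(\int_{-T}^T|\rho(t)|^{3/2}\,dt\right)^{2},\qquad \norm{f_T\otimes_1 f_T}^2\le\frac{1}{T^3}\left(\int_{-T}^T|\rho(t)|^{4/3}\,dt\right)^{3}.
\end{equation*}
Plugging these into the cumulant identities and using $\sigma_T^2\gtrsim 1/T$ turns the two terms of the displayed bound into the two terms of $\varphi_T(Z)$ (the constants $8$ and $48$ being exactly those of the cumulant formulas).

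I expect the cyclic-integral estimate to be the main obstacle: extracting the precise exponents ($3/2$ for the triangle, $4/3$ for the square) and the precise powers of $T$ from the finite-interval Young (equivalently Brascamp--Lieb) inequality, and justifying the truncation $\rho\rightsquigarrow|\rho|\,\mathbf{1}_{[-T,T]}$ cleanly. The lower bound $\sigma_T^2\gtrsim 1/T$, though elementary, also has to be in place since it is what turns the chaos-level cumulants into cumulants of the normalised statistic. The Wasserstein statement then needs nothing new: the Malliavin--Stein bound for $d_W$ on the second chaos is controlled by the same contractions, so the identical chain of inequalities applies.
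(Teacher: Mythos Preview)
Your proposal is correct and follows essentially the same route as the paper: represent the centred statistic as a second-chaos element, invoke the optimal fourth-moment (Malliavin--Stein) bound $d_{TV}\le C\max\{|\kappa_3|/\sigma^3,\kappa_4/\sigma^4\}$, reduce the cumulants to the cyclic $\rho$-integrals, and control those by truncation plus Young's inequality with exponents $3/2$ and $4/3$. The only cosmetic differences are that the paper works with the rescaled quantity $V_T(Z)=\sqrt{T}(\widehat f_T(Z)-f_Z)$ (so the variance converges to a positive constant and the lower bound you flag as $\sigma_T^2\gtrsim 1/T$ becomes simply $\E[V_T^2]\uparrow\sigma_Z^2$), and it reaches the Young bound via a H\"older step followed by the binary Young inequality rather than your one-shot $m$-fold Young at $L^\infty$; the resulting constants $8$ and $48$ and the powers of $T$ are identical.
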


For the discrete estimator $\widetilde{f}_n(Z)$ we have that:
\begin{theorem}\label{CLT-tilde-STAT}  Assume $ \int_\R \rho^2(r) dr  < \infty$ and that
\begin{align}
\notag \E [ | Z_t - Z_s|^2 ] \leq c |t -s |^{2\alpha}
\end{align}  for some $c>0$ and $\alpha  \in (0,1 )$ and when $|t - s|$ is small enough.
Let   $\mathcal{N} \sim\mathcal{N}(0,1)$ be the standard normal random variable.
If  $  \Delta_n \to 0$ and $n\Delta_n \to \infty$ as $n \to \infty$, then there is $C > 0$ such that, for every $n\geq1$,
\begin{eqnarray*}
 d_{TV}\left(\frac{\widetilde{f}_n(Z) - f_Z}{\sqrt{Var(\widetilde{f}_n(Z) - f_Z) } } , \mathcal{N} \right) \leq \varphi_{T_n}(Z)+  2 \left| 1-  \frac{Var(\widehat{f}_{T_n}(Z) - f_Z) }{Var(\widetilde{f}_n(Z) - f_Z) }\right| + C \left[n\Delta_n^{2\alpha+1}\right]^{1/4},
\end{eqnarray*}
where $\varphi_{T_n}(Z)$ satisfies~\eqref{eq:varphi_bound}. The same result holds for the Wasserstein distance.
\end{theorem}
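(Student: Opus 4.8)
The plan is to deduce the stated inequality from Theorem~\ref{CLT-hat-STAT} by a transfer argument: the continuous estimator is already controlled at $T=T_n$, and $\widetilde f_n(Z)$ is a discretisation of it. Write $\widehat F\coloneqq\widehat f_{T_n}(Z)-f_Z$ and $\widetilde F\coloneqq\widetilde f_n(Z)-f_Z$; both belong to the second Wiener chaos of $Z$, and set $\widehat\sigma^2\coloneqq Var(\widehat F)$, $\widetilde\sigma^2\coloneqq Var(\widetilde F)$. Since $T_n=n\Delta_n$, one has the exact identity
\begin{equation*}
\widetilde F-\widehat F=\widetilde f_n(Z)-\widehat f_{T_n}(Z)=\frac1n\sum_{i=1}^n\frac1{\Delta_n}\int_{t_{i-1}}^{t_i}\bigl(Z_{t_i}^2-Z_t^2\bigr)\,dt .
\end{equation*}
The three terms of the asserted bound will come, respectively, from Theorem~\ref{CLT-hat-STAT} applied at $T=T_n$, from the change of normalising variance from $\widehat\sigma^2$ to $\widetilde\sigma^2$, and from the $L^2$-size of $\widetilde F-\widehat F$.

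Next I would estimate the gap in $L^2$. For $t\in[t_{i-1},t_i]$, Gaussian hypercontractivity on the first chaos ($\norm{X}_{L^4}\le 3^{1/4}\norm{X}_{L^2}$) together with the hypothesis $\E[|Z_t-Z_s|^2]\le c|t-s|^{2\alpha}$ gives
\begin{equation*}
\norm{Z_{t_i}^2-Z_t^2}_{L^2}\le\norm{Z_{t_i}-Z_t}_{L^4}\norm{Z_{t_i}+Z_t}_{L^4}\le C\,|t_i-t|^\alpha\le C\Delta_n^\alpha ,
\end{equation*}
so Minkowski's inequality applied to the identity above yields $\norm{\widetilde F-\widehat F}_{L^2}\le C\Delta_n^\alpha$. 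On the other hand, from $Cov(Z_{t_i}^2,Z_{t_j}^2)=2\rho((i-j)\Delta_n)^2$ one has $\widetilde\sigma^2=\tfrac{2}{n^2}\sum_{|k|<n}(n-|k|)\rho(k\Delta_n)^2$; retaining only $|k|\le\lfloor1/\Delta_n\rfloor$ and recognising a Riemann sum of $\rho^2$ over $[-1,1]$ (using that $\rho$ is continuous, by the Hölder hypothesis, with $\rho(0)=f_Z>0$) shows $\widetilde\sigma^2\ge c'/T_n$ for $n$ large. Combining the two estimates,
\begin{equation*}
\E\!\left[\Bigl(\tfrac{\widetilde F-\widehat F}{\widetilde\sigma}\Bigr)^{\!2}\right]\le\frac{C^2\Delta_n^{2\alpha}}{c'/T_n}\le C''\,n\Delta_n^{2\alpha+1} .
\end{equation*}

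To conclude, set $a\coloneqq\widehat\sigma/\widetilde\sigma$ and decompose $\widetilde F/\widetilde\sigma=(\widetilde F-\widehat F)/\widetilde\sigma+a\,(\widehat F/\widehat\sigma)$. By the triangle inequality for $d_{TV}$, with intermediate points $a\,(\widehat F/\widehat\sigma)$ and $a\mathcal N$,
\begin{equation*}
d_{TV}\!\left(\tfrac{\widetilde F}{\widetilde\sigma},\mathcal N\right)\le d_{TV}\!\left(\tfrac{\widetilde F}{\widetilde\sigma},\,a\tfrac{\widehat F}{\widehat\sigma}\right)+d_{TV}\!\left(a\tfrac{\widehat F}{\widehat\sigma},\,a\mathcal N\right)+d_{TV}\!\left(a\mathcal N,\mathcal N\right).
\end{equation*}
The middle term equals $d_{TV}(\widehat F/\widehat\sigma,\mathcal N)$ because $d_{TV}$ is invariant under the bijection $x\mapsto ax$, hence is $\le\varphi_{T_n}(Z)$ by Theorem~\ref{CLT-hat-STAT}; the last term is $d_{TV}(\mathcal N(0,a^2),\mathcal N(0,1))\le 2|1-a^2|=2\bigl|1-\widehat\sigma^2/\widetilde\sigma^2\bigr|$, the middle summand of the claimed bound. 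For the first term, note that $a\,(\widehat F/\widehat\sigma)=\widehat F/\widetilde\sigma$ lies in the second chaos with normalised fourth cumulant at most a constant times the quantity $\tfrac{48}{T_n}\bigl(\int_{-T_n}^{T_n}|\rho(t)|^{4/3}\,dt\bigr)^3$ from~\eqref{eq:varphi_bound}; hence, for all $n$ beyond some $N_0$ (below which the claimed inequality is made trivial by enlarging $C$), its density is bounded by an absolute constant, and a smoothing inequality of the form $d_{TV}(X,Y)\le C\sqrt{d_W(X,Y)}$ — valid when $Y$ has bounded density — combined with $d_W(X,Y)\le\norm{X-Y}_{L^2}$ gives
\begin{equation*}
d_{TV}\!\left(\tfrac{\widetilde F}{\widetilde\sigma},\,a\tfrac{\widehat F}{\widehat\sigma}\right)\le C\,\norm{\tfrac{\widetilde F-\widehat F}{\widetilde\sigma}}_{L^2}^{1/2}\le C\bigl[n\Delta_n^{2\alpha+1}\bigr]^{1/4} ,
\end{equation*}
by the preceding display. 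Summing the three contributions proves the theorem; for the Wasserstein distance the argument is the same (and in fact simpler, since $d_W(X,Y)\le\norm{X-Y}_{L^2}$ bounds the first term directly, with no density estimate needed).

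The main obstacle is the first term: a total-variation distance cannot be controlled by an $L^2$-approximation on its own, so one must exploit the regularity of the density of $\widehat F/\widehat\sigma$ — this is exactly where the second-chaos structure and the already-quantified smallness of its fourth cumulant are used, and it is what turns the $L^2$-rate $n\Delta_n^{2\alpha+1}$ into its fourth root. A secondary technical point is the uniform lower bound $\widetilde\sigma^2\gtrsim1/T_n$, which rests on the Riemann-sum comparison and the non-degeneracy $f_Z>0$; one must also check that the constants in the hypercontractivity and smoothing steps are absolute, so that the estimate holds for every $n\ge1$.
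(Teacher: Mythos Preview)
Your overall architecture matches the paper's: split via the triangle inequality, apply Theorem~\ref{CLT-hat-STAT} for the continuous piece, absorb the change of normalising variance into the term $2|1-\widehat\sigma^2/\widetilde\sigma^2|$, and control the remaining $d_{TV}$-distance between the discrete and continuous second-chaos variables by an $L^2$-estimate. Your $L^2$ bound $\|\widetilde F-\widehat F\|_{L^2}\le C\Delta_n^\alpha$ via Minkowski and hypercontractivity is correct and in fact slightly slicker than the paper's Cauchy--Schwarz computation; the lower bound $\widetilde\sigma^2\gtrsim 1/T_n$ is also fine (and can be obtained even more directly by comparing with the paper's $\E[V_{T_n}^2]\to\sigma_Z^2$).

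The gap is in the step you flag yourself as the ``main obstacle''. The inequality
\[
d_{TV}(X,Y)\le C\sqrt{d_W(X,Y)}\qquad\text{whenever $Y$ has bounded density}
\]
is \emph{false}. A clean counterexample: take $Y\sim\mathcal N(0,1)$ and let $X$ have density $\phi(x)(1+\sin(nx))$; then $d_{TV}(X,Y)=\tfrac12\int\phi(x)|\sin(nx)|\,dx$ stays bounded away from $0$, while $d_W(X,Y)=\int|F_X-F_Y|=O(1/n)$ by integration by parts. (An even simpler one: $Y$ uniform on $[0,1]$ and $X$ uniform on $\{i/n:1\le i\le n\}$ gives $d_{TV}=1$ and $d_W\le 1/n$.) Bounding the density of one of the two variables---or even of both---does not let you pass from Wasserstein to total variation; you genuinely need the algebraic structure of the chaos. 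The paper closes this step by invoking a result of Kosov (Lemma~\ref{dTV} in the paper, from \cite{kosov}): for any two multiple integrals $F,G$ of order $2$,
\[
d_{TV}(F,G)\le C\left(\frac{\E[(F-G)^2]}{\E[F^2]}\right)^{1/4}.
\]
Applied with $F=V_{T_n}(Z)$ and $G=U_n(Z)$, together with $\E[V_{T_n}^2]\to\sigma_Z^2$ and the $L^2$-bound on $\delta_n=U_n-V_{T_n}$, this gives exactly the $[n\Delta_n^{2\alpha+1}]^{1/4}$ term. Once you replace your smoothing heuristic by this lemma, the rest of your argument goes through unchanged.
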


Thanks to the robustness of our approach, we can extend Theorem~\ref{CLT-hat-STAT} and Theorem~\ref{CLT-tilde-STAT} to the case when the process is only an asymptotically stationary Gaussian process. In particular we prove the rate of convergence of the second moment estimators for $X \coloneqq Z + Y$, where $Z$ is the stationary Gaussian process as above and $Y$ is a stochastic process with $\norm{Y_t}_{L^1} = \mathcal{O}(t^{- \gamma})$ for some absolute $\gamma > 1$. The bounds will be the same up to an extra term $CT^{(1 - \gamma)/2}$ (or $CT_n^{(1 - \gamma)/2}$ for the discrete case).

Moreover, we explore applications to drift estimation for the Ornstein-Uhlenbeck process. Let $X^\theta = (X_t^\theta)_{t \geq 0}$ be an ergodic type Gaussian Ornstein-Uhlenbeck process given by the differential equation
\begin{align}
\notag d X_t^\theta = - \theta X_t^\theta dt + d G_t, \quad X_0^\theta = 0,
\end{align}
where $\theta$ is the drift parameter and $(G_t)_{t \geq 0}$ is an arbitrary mean-zero Gaussian process. One can show that $X^\theta$ is asymptotically stationary and write $X^\theta = Z^\theta + Y^\theta$ where $Z^\theta$ is a stationary Gaussian process and $\norm{Y_t}_{L^1} = \mathcal{O}(t^{- \gamma})$ for any $\gamma > 1$. Then we provide Berry-Esseen bounds for the estimators $\widehat{\theta} \coloneqq g_{Z^\theta} \left( \widehat{f}_T (X^\theta) \right)$ and $\widetilde{\theta} \coloneqq g_{Z^\theta} \left( \widetilde{f}_n (X^\theta) \right)$. The function $g_{Z^\theta}$ is given via $g_{Z^\theta}^{-1} (\theta) = \E [(Z_0^\theta)^2]$ and $\widehat{f}_T$, $\widetilde{f}_{n}$ are as in~\eqref{estim-cont-STAT} and~\eqref{estim-disc-STAT}. Concrete bounds are  computed for the cases when the process $X^\theta$ is of the first and second kind, i.e., when $(G_t)_{t \geq 0}$ is a particular Gaussian process, following the terminology in~\cite{KS}.

Parameter estimation for stationary Gaussian process is usually done via the Maximum likelihood estimator because of its asymptotic optimality, see~\cite{schervish} and~\cite{PDT}. For instance, the MLE estimator of a stationary ARMA process is strongly consistent and asymptotically efficient~\cite[Section 10.8]{BD}.
The method of moments is more computationally tractable especially when one considers discrete estimators. Some recent studies include~\cite{DEV,EV,HS} where the mesh in time $\Delta_n=1$ in~\eqref{estim-disc-STAT}, which is akin to the discretization of a least-squares method for fractional Gaussian processes using fixed-time-step observations. See also~\cite{CTL} for an application of the second moment method to an $AR(1)$ model.

 In the last few years the estimators~\eqref{estim-cont-STAT} and~\eqref{estim-disc-STAT} have been used, in a number of instance, to study parameter estimation problems in various fractional Gaussian models.  Some of these results can be summarized below.
\begin{itemize}
\item \emph{The case of  continuous-time observations for ergodic-type Gaussian processes, using~\eqref{estim-cont-STAT}:}
The work~\cite{SV} derived a central limit theorem and a Berry-Esseen bound in Kolmogorov distance for the second moment estimator~\eqref{estim-cont-STAT}  of   the limiting variance of an
Ornstein-Uhlenbeck (OU) process driven by stationary-increment Gaussian noise. In~\cite{HNZ}, the authors considered the estimator~\eqref{estim-cont-STAT} (called ``Alternative estimator'' there) to estimate the drift parameter of  an  OU process driven by fBm with  Hurst parameter $H\in(0,1)$. They proved a central limit theorem  when   $H\in(0, 3/4]$ and a noncentral limit theorem  for $H\in (3/4, 1)$. However, they did not give speed of convergence for these limit theorems. Their approach used a least
 squares estimator (LSE) to study the second moment estimator. Berry-Esseen bounds for a central limit theorem with the  Kolmogorov distance were given first in~\cite{ES2013} and then further improved upon in~\cite{JLW} when $H \in (1/2, 3/4]$.

Moreover, drift parameter estimation using~\eqref{estim-cont-STAT} was employed for  an  OU process driven  by a Hermite process in~\cite{NT} and driven by an $\alpha-$stable L\'evy motion in~\cite{CHL}.
On the other hand, the consistency and speed of convergence in the TV and Wasserstein norms for the  estimator~\eqref{estim-cont-STAT}  of the drift parameter in infinite dimensional linear stochastic equations driven by a fBm are studied by \cite{KM}.

\item \emph{The case of  discrete-time observations  for ergodic-type Gaussian processes, using
~\eqref{estim-disc-STAT}:} In the case when the mesh in time  $\Delta_n=1$, the consistency and speed of convergence in the TV and Wasserstein distance for the  estimator~\eqref{estim-disc-STAT} of the limiting variance of of general Gaussian sequences were recently developed in the papers \cite{EV, DEV}. Also, the drift parameter in linear stochastic evolution equation driven by a fBm is considered
in \cite{KM}. On the other hand, in the case of high frequency data corresponding to $\Delta_n \to 0$ in~\eqref{estim-disc-STAT}, the statistical inference for several ergodic-type fractional Ornstein-Uhlenbeck (fOU) models, using the estimator~\eqref{estim-disc-STAT}, was recently studied in the papers in~\cite{EEV,HNZ,SV}. However, these papers did not provide speed of convergence for the asymptotic distribution of~\eqref{estim-disc-STAT}.
 \end{itemize}
For the drift parameter estimation for non-ergodic fractional-noise-driven Ornstein-Uhlenbeck processes we refer the interested readers to \cite{EEO,EAA,AAE,EE} and references therein.

For our proofs we employ tools from the analysis on Wiener space, including Wiener chaos calculus and Malliavin calculus. The main theorem regarding normal approximations is the so-called Optimal Fourth Moment Theorem due to Nourdin and Peccati~\cite{NP2015}. A review of these tools and results can be found in Section~\ref{sec:review}. Then the proofs for the stationary and asymptotically stationary Gaussian case are outlined in Sections~\ref{sec:stationary} and~\ref{sec:asym} respectively. The application to drift estimation for Ornstein-Uhlenbeck processes is carried out in Section~\ref{sec:OU}.
Some of our more technical calculations can be seen in Section~\ref{sec:technical}; note that some of results in this section, for instance Proposition~\ref{prop:bound_ft_p}, are of independent interest and use novel techniques.

\section{Elements of Malliavin calculus on Wiener space}\label{sec:review}

This section gives a brief overview of some useful facts from the Malliavin calculus on Wiener space. Some of the results presented here are essential for the proofs in the present paper. For our purposes we focus on special cases that are relevant for our setting and omit the general high-level theory. We direct the interested reader to~\cite[Chapter 1]{nualart-book}and~\cite[Chapter 2]{NP-book}.

Fix $\left( \Omega ,\mathcal{F},\PP \right) $ for the Wiener space of a standard Wiener process $W = (W_t)_{t \geq 0}$. The first step is to identify the general centered Gaussian process $(Z_t)_{\geq 0}$ with an \emph{isonormal Gaussian process} $X = \{ X(h), h \in \mathcal{H}\}$ for some Hilbert space $\mathcal{H}$. Recall that for such processes $X$, for every $h_1, h_2 \in \mathcal{H}$, one has $\E [ X(h_1) X(h_2) ] = \inner{h_1}{h_2}_\HC$.

One can define $\HC$ as the closure of real-valued step functions on $[0, \infty)$ with respect to the inner product
$\inner{\stepid{t}}{\stepid{s}}_\HC = \E[ Z_t Z_s]$. Then the isonormal process $X$ is given by Wiener integral $X(h) \coloneqq \int_{\R^+} h(s) dW_s$. Note, that, in particular $X(\stepid{t}) \overset{d}{=} Z_t$.

The next step involves the \emph{multiple Wiener-It\^o integrals}. The formal definition involves the concepts of Malliavin derivative and divergence. We refer the reader to~\cite[Chapter 1]{nualart-book}and~\cite[Chapter 2]{NP-book}. For our purposes we define the multiple Wiener-It\^o integral $I_p$ via the Hermite polynomials $H_p$. In particular, for $h \in \HC$ with $\norm{h}_\HC = 1$, and any $p \geq 1$,
\begin{align}
\notag H_p(X(h)) = I_p(f^{\otimes p}).
\end{align}
For $p = 1$ and $p = 2$ we have the following:
\begin{align}
\label{eq:z_rep} H_1(X(\stepid{t})) = & X(\stepid{t}) = I_1(\stepid{t}) = Z_t \\
\label{eq:var_rep} H_2(X (\stepid{t})) = & X(\stepid{t})^2 - \E[X(\stepid{t})^2] = I_2(\stepid{t}^{\otimes 2}) = Z_t^2 - \E[Z_t]^2.
\end{align}
Note also that $I_0$ can be taken to be the identity operator.

\begin{remark}\emph{Some notation for Hilbert spaces.} Let $\HC$ be a Hilbert space. Given an integer $q \geq 2$ the Hilbert spaces $\HC^{\otimes q}$ and $\HC^{\odot q}$ correspond to the $q$th \emph{tensor product} and $q$th \emph{symmetric tensor product} of $\HC$. If $f \in \HC^{\otimes q}$ is given by $f = \sum_{j_1, \ldots, j_q} a(j_1, \ldots, j_q) e_{j_1} \otimes \cdots e_{j_q}$, where $(e_{j_i})_{i \in [1, q]}$ form an orthonormal basis of $\HC^{\otimes q}$, then the symmetrization $\tilde{f}$ is given by
\begin{align}
\notag \tilde{f} = \frac{1}{q!} \sum_{\sigma} \sum_{j_1, \ldots, j_q} a(j_1, \ldots, j_q) e_{\sigma(j_1)} \otimes \cdots e_{\sigma(j_q)},
\end{align}
where the first sum runs over all permutations  $\sigma$ of $\{1, \ldots, q\}$. Then $\tilde{f}$ is an element of $\HC^{\odot q}$.
We also make use of the concept of contraction. The $r$th \emph{contraction} of two tensor products $e_{j_1} \otimes \cdots \otimes e_{j_p}$ and $e_{k_1} \otimes \cdots e_{k_q}$ is an element of $\HC^{\otimes (p + q - 2r)}$ given by
\begin{align}
\notag (e_{j_1} & \otimes \cdots \otimes e_{j_p}) \otimes_r (e_{k_1} \otimes \cdots \otimes e_{k_q}) \\
\label{eq:contraction} =  & \quad \left[ \prod_{\ell =1}^r \inner{e_{j_\ell}}{e_{k_\ell}} \right] e_{j_{r+1}} \otimes \cdots \otimes e_{j_q} \otimes e_{k_{r+1}} \otimes \cdots \otimes e_{k_q}.
\end{align}

\end{remark}
The main motivation for introducing the multiple integrals comes from the following properties:
\begin{itemize}
\item \textit{Isometry property of integrals~\cite[Proposition 2.7.5]{NP-book}} Fix integers $p, q \geq 1$ as well as $f \in \HC^{\odot p}$ and $g \in \HC^{\odot q}$.
\begin{align}
\label{eq:isometry}  \E [ I_q(f) I_q(g) ] = \left\{ \begin{array}{ll} p! \inner{f}{g}_{\HC^{\otimes p}} & \mbox{ if } p = q \\ 0 & \mbox{otherwise.} \end{array} \right.
\end{align}
\item \textit{Product formula~\cite[Proposition 2.7.10]{NP-book}} Let $p,q \geq 1$. If $f \in \HC^{\odot p}$ and $g \in \HC^{\odot q}$ then
\begin{align}
\label{eq:product} I_p(f) I_q(g) = \sum_{r = 0}^{p \wedge q} r! {p \choose r} {q \choose r} I_{p + q -2r}(f \widetilde{\otimes}_r g).
\end{align}
\item \textit{Hypercontractivity in Wiener Chaos.} For every $q\geq 1$, ${\mathcal{H}}_{q}$ denotes the $q$th Wiener chaos of $W$, defined as the closed linear subspace of $L^{2}(\Omega )$ generated by the random variables $\{H_{q}(W(h)),h\in {{\mathcal{H}}},\Vert h\Vert _{{\mathcal{H}}}=1\}$ where $H_{q}$ is the $q$th Hermite polynomial. For any $F \in  \oplus_{l=1}^{q}{\mathcal{H}}_{l}$ (i.e. in a fixed sum of Wiener chaoses), we have
\begin{equation}
\left( \E\big[|F|^{p}\big]\right) ^{1/p}\leqslant c_{p,q}\left( \E\big[|F|^{2}\big]\right) ^{1/2}\ \mbox{ for any }p\geq 2.
\label{hypercontractivity}
\end{equation}
It should be noted that the constants $c_{p,q}$ above are known with some precision when $F$ is a single chaos term: indeed, by~\cite[Corollary 2.8.14]{NP-book}, $c_{p,q}=\left( p-1\right) ^{q/2}$.

\end{itemize}

The second part of important results we borrow from Malliavin calculus concerns estimates on the distance between random variables. There are two key estimates linking total variation distance and the Malliavin calculus, which were both obtained by Nourdin and Peccati. The first one is an observation relating an integration-by-parts formula on Wiener space with a classical result of Ch. Stein. The second is a quantitatively sharp version of the famous fourth moment theorem of Nualart and Peccati.

Let $\mathcal{N}$ denote the standard normal law. For each integer $n$, let $F_{n}\in {\mathcal{H}}_{q}$. Assume $Var\left[ F_{n}\right] =1$ and $\left( F_{n}\right) _{n}$ converges in distribution to a normal law. It is known (the \emph{fourth moment theorem} in~\cite{NP}) that this convergence is equivalent to $\lim_{n}\E\left[ F_{n}^{4}\right] =3$. The following optimal estimate for $d_{TV}\left( F_n,\mathcal{N}\right) $, known as the optimal fourth moment theorem, was proved in \cite{NP2015}: with the sequence $F$ as above, assuming convergence, there exist two constants $c,C>0$ depending only on the type of the process $F$ but not on $n$, such that
\begin{equation}
c\max \left\{ \E\left[ F_{n}^{4}\right] -3,\left\vert \E\left[ F_{n}^{3}\right] \right\vert \right\} \leqslant d_{TV}\left(F_{n},\mathcal{N}\right) \leqslant C\max \left\{ \E\left[F_{n}^{4}\right] -3,\left\vert \E\left[ F_{n}^{3}\right]\right\vert \right\} . \label{optimal berry esseen}
\end{equation}

Recall that for a standardized random variable $F$, i.e., with $\E[F] = 0$ and $\E[F^2] = 1$, the third and fourth cumulants are respectively
\begin{align}
\notag \kappa_3 (F) \coloneqq & \E[F^3], \\
\notag  \kappa _{4}\left( F\right)  \coloneqq & \E\left[ F^{4}\right] -3.
\end{align}
Throughout the paper we use the notation $\mathcal{N} \sim \mathcal{N}(0,1)$. We also use the notation $C$ for any positive real constant, independently of its value which may change from line to line when this does not lead to ambiguity.
\begin{remark}\label{rem:wass_tv} We note that the optimal bound~\eqref{optimal berry esseen} holds with $d_{TV}$ replaced by $d_W$. Indeed, by~\cite[Theorem 3.5.2]{NP-book}:
\begin{align}
\notag d_W(F,\mathcal{N} )\leq \sup_{\varphi\in \mathcal{F}} |\E[\varphi'(F)]-\E[F\varphi(F)]|
\end{align}
where $\mathcal{F}$ is the set of $C^1$ functions $\varphi$ such that $|\varphi'|_\infty \leq \sqrt{2/\pi}$. Now using the concepts of the Malliavin derivative operator $D$ and Ornstein-Uhlenbeck generator $L$, see~\cite[Theorem 4.15]{Nourdin2013}, one has $|\E[\varphi'(F)]-\E[F\varphi(F)]|  = | \E[ \varphi'(F) \E[(1-\langle DF,-DL^{-1}F\rangle) | \mathcal{F} ] ] |$, and thus:
\begin{align}
\notag d_W(F,\mathcal{N} )\leq \sqrt{2/\pi} \E |\E[1-\langle DF,-DL^{-1}F \rangle | \mathcal{F} ] |
\end{align}
However, this is the same bound one has (up to the constant $\sqrt{2/\pi}$) in the proof of the optimal fourth moment theorem~\cite[Proof of Theorem 1.2]{NP2015}.

\end{remark}

\section{Parameter estimation for stationary Gaussian processes}\label{sec:stationary}

In this section we present a general framework for the parameter estimation  of the variance  of a stationary Gaussian process.
 We prove the consistency and provide upper bounds in the total variation and Wasserstein distances  for the rate of normal
convergence of the MC estimators (\ref{estim-cont-STAT}) and (\ref{estim-disc-STAT}).\\

 Let $Z \coloneqq \{Z_{t},   t\geq0 \}$ be a continuous centered stationary Gaussian process that can be represented as a Wiener-It\^o (multiple) integral $Z_{t} = I_{1}(\stepid{t})$ for every $t\geq 0$, as  in~\eqref{eq:z_rep}.  Let $\rho(r)=E(Z_rZ_0)$ denote the covariance of $Z$ for every $r\geq0$, and  let $\rho(r)=\rho(-r)$ for all $r<0$. Our main assumption throughout the paper is that
\begin{align}
\notag \sigma_Z^2 \coloneqq 4 \int_{\R}\rho^2(r)dr<\infty.
\end{align}

\subsection{Continuous-time observations}
We estimate the variance $f_Z \coloneqq \E(Z_0^2)$, when the whole trajectory  of $Z$ is observed up to time $T> 0$. We consider the estimator~\eqref{estim-cont-STAT} given by
\begin{equation*}
\widehat{f}_T(Z)=\frac{1}{T} \int_{0}^{T} Z_{t}^{2}dt,\quad T>0
\end{equation*}
as a statistic to estimate   $f_Z$, based on the continuous-time observation of $Z$. Our goal is to establish Theorem~\ref{CLT-hat-STAT}, i.e.,  a Berry-Esseen bound on the the convergence of $\widehat{f}_T(Z) - f_Z$. First we show some simpler properties of $\widehat{f}_T(Z)$.

\begin{lemma}\label{lem:ft_strongc} The estimator $\widehat{f}_T(Z)$ is unbiased and strongly consistent. In particular,
\begin{align}
\label{eq:VT2_finite} \sqrt{T} \norm{\widehat{f}_T(Z) - f_Z}_{L^2} \uparrow \sigma_Z \mbox{ as }  T\to \infty.
\end{align}
\end{lemma}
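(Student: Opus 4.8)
The plan is to represent $\widehat f_T(Z) - f_Z$ as a single element of the second Wiener chaos and then extract everything from the isometry formula and from hypercontractivity. Unbiasedness is immediate from stationarity: $\E[\widehat f_T(Z)] = \frac1T\int_0^T \E[Z_t^2]\,dt = f_Z$. For the remaining assertions, I would first invoke~\eqref{eq:var_rep} to write $Z_t^2 - f_Z = Z_t^2 - \E[Z_t^2] = I_2(\stepid{t}^{\otimes 2})$ and then pull $I_2$ through the integral to obtain
\begin{align}
\notag \widehat f_T(Z) - f_Z = I_2(g_T), \qquad g_T \coloneqq \frac1T\int_0^T \stepid{t}^{\otimes 2}\,dt \in \HC^{\odot 2}.
\end{align}
This interchange is justified since $t \mapsto \stepid{t}^{\otimes 2}$ is continuous from $[0,T]$ into $\HC^{\otimes 2}$ --- indeed $\norm{\stepid{t}^{\otimes 2} - \stepid{s}^{\otimes 2}}_{\HC^{\otimes 2}}^2 = 2\rho(0)^2 - 2\rho(t-s)^2 \to 0$ as $s\to t$ by continuity of $\rho$ --- so the integral exists as a Bochner integral in $\HC^{\otimes 2}$ and commutes with the bounded operator $I_2$ (equivalently, one approximates by Riemann sums in $L^2(\Omega)$).

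Next, by the isometry~\eqref{eq:isometry} and the identity $\inner{\stepid{t}}{\stepid{s}}_\HC = \rho(t-s)$,
\begin{align}
\notag T\,\norm{\widehat f_T(Z) - f_Z}_{L^2}^2 = 2T\,\norm{g_T}_{\HC^{\otimes 2}}^2 = \frac{2}{T}\int_0^T\!\!\int_0^T \rho(t-s)^2\,ds\,dt = 2\int_{-T}^{T}\Big(1 - \tfrac{|r|}{T}\Big)\rho(r)^2\,dr ,
\end{align}
the last equality being the substitution $r = t-s$. For each fixed $r$ the integrand $(1 - |r|/T)\mathbf{1}_{\{|r|\le T\}}\rho(r)^2$ is nondecreasing in $T$ with limit $\rho(r)^2$, so the monotone convergence theorem gives that $T\,\norm{\widehat f_T(Z) - f_Z}_{L^2}^2$ increases, as $T\to\infty$, to $2\int_\R\rho(r)^2\,dr<\infty$; this is precisely~\eqref{eq:VT2_finite}, and in particular $\norm{\widehat f_T(Z)-f_Z}_{L^2} = O(T^{-1/2})$, so $\widehat f_T(Z)\to f_Z$ in $L^2$.

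It remains to upgrade $L^2$-convergence to almost sure convergence, which is the only step requiring more than bookkeeping. Since $\widehat f_T(Z) - f_Z$ lies in the fixed chaos $\mathcal{H}_2$, hypercontractivity~\eqref{hypercontractivity} gives $\norm{\widehat f_T(Z) - f_Z}_{L^4} \le 3\,\norm{\widehat f_T(Z) - f_Z}_{L^2} \le C T^{-1/2}$, whence by Markov's inequality $\sum_{n\ge1}\PP\big(|\widehat f_n(Z) - f_Z| > \varepsilon\big) \le C\varepsilon^{-4}\sum_{n\ge1}n^{-2} < \infty$; Borel--Cantelli then yields $\widehat f_n(Z)\to f_Z$ a.s.\ along the integers. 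To cover intermediate times I would use $Z_t^2 \ge 0$: for $T \in [n,n+1]$,
\begin{align}
\notag \tfrac{n}{T}\,\widehat f_n(Z) = \tfrac1T\int_0^n Z_t^2\,dt \;\le\; \widehat f_T(Z) \;\le\; \tfrac1T\int_0^{n+1} Z_t^2\,dt = \tfrac{n+1}{T}\,\widehat f_{n+1}(Z),
\end{align}
and letting $T\to\infty$ with $n = \lfloor T\rfloor$ both outer terms tend to $f_Z$ a.s., so squeezing gives $\widehat f_T(Z)\to f_Z$ a.s. (Alternatively, almost sure convergence follows from Birkhoff's ergodic theorem, since $\rho \in L^2(\R)$ together with continuity of $\rho$ forces $\rho(r)\to 0$, so that the stationary Gaussian process $Z$ is mixing, hence ergodic.)
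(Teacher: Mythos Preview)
Your proof is correct and matches the paper's argument for unbiasedness and for the variance computation leading to~\eqref{eq:VT2_finite}; both reduce to the change of variable $r=t-s$ and monotone convergence. The one genuine difference is in the passage from almost sure convergence along integers to almost sure convergence along real $T$. The paper invokes a separate technical lemma (Lemma~\ref{continuous-Borel-Cantelli}), which controls $\frac{1}{T}\int_{\lfloor T\rfloor}^{T} u_t\,dt$ via uniform moment bounds and a further Borel--Cantelli argument; your squeeze argument using $Z_t^2\ge 0$ is more elementary and avoids that lemma entirely, at the cost of being specific to nonnegative integrands (the paper's lemma is stated for general $u_t$ and is reused later for the non-stationary case). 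One minor caution on your parenthetical alternative: continuity together with $\rho\in L^2(\R)$ does \emph{not} by itself force $\rho(r)\to 0$, so that sentence needs a different justification---e.g., $\rho\in L^2$ implies the spectral measure has a density, hence no atoms, which is what actually gives ergodicity of the Gaussian process. Your main argument, however, does not rely on this.
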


\begin{proof}
By stationarity, $\E [Z_0^2] = \E[Z_s^2]$ for every $s \geq 0$ and thus $\E[ \widehat{f}_T(Z) ] = f_Z$, so the estimator $\widehat{f}_T(Z)$ is unbiased.

The next step is to show that the estimator $\widehat{f}_T(Z)$ is strongly consistent, i.e.,
\begin{align}
\notag \widehat{f}_T(Z) \to f_Z \mbox{ almost surely as }T \to \infty.
\end{align}
 Let
\begin{align}
\label{chaos-V} V_{T}(Z) \coloneqq\sqrt{T} \left(\widehat{f}_T(Z) - f_Z\right) =  \frac{1}{\sqrt{T}} \int_{0}^{T} \left( Z_{t}^{2} - E[Z_{t}^{2}] \right) dt =\frac{1}{\sqrt{T}} \int_{0}^{T} I_2(\stepid{t}^{\otimes 2}) dt,
\end{align}
where we have used~\eqref{eq:var_rep}. Then
\begin{align}
\label{eq:second_exp} \E[V_T(Z)^2] =  \frac{1}{T} \E \left[ \left( \int_0^T I_2(\stepid{t}^{\otimes 2}) dt \right)^2 \right] = \frac{1}{T} \int_{[0, T]^2} \E \left[ I_2(\stepid{t}^{\otimes 2}) I_2(\stepid{s}^{\otimes 2})\right] dt ds
\end{align}
By~\eqref{eq:isometry},
\begin{align}
\notag  \E \left[ I_2(\stepid{t}^{\otimes 2}) I_2(\stepid{s}^{\otimes 2})\right] = 2! \inner{\stepid{t}^{\otimes 2}}{\stepid{s}^{\otimes 2}}_{\HC^{\otimes 2}} = 2 \left( \E[ Z_t Z_s] \right)^2 = 2 \rho(t-s)^2.
\end{align}
Therefore,
\begin{align}
\notag \E[V_T(Z)^2] =  \frac{2}{T} \int_{[0, T]^2} \rho(t-s)^2 dt ds = \frac{4}{T} \int_0^T \int_u^T \rho(u)^2 dt du \\
\label{eq:vt_conv} = 4 \int_0^\infty \left(1 - \frac{u}{T}\right)_+ \rho(u)^2 \leq 4 \int_0^T \rho(u)^2 du < \infty.
\end{align}
where $(x)_+ = \max \{ x, 0\}$. Note that the above also implies that $\E [V_T(Z)^2] \uparrow \sigma_Z^2$, establishing~\eqref{eq:VT2_finite}. Alternatively,  $\norm{\widehat{f}_T(Z)  - f_Z}_{L^2} \leq \sigma_Z/ \sqrt{T}$. In particular, this shows that $\widehat{f}_T(Z)$ converges to $f_Z$ in $L^2$. At this point we recall~\cite[Lemma 2.1]{KN}):
\begin{lemma}
\label{Borel-Cantelli} Let $\gamma >0$. Let $(Z_{n})_{n\in \mathbb{N}}$ be a sequence of random variables. If for every $p\geq 1$ there exists a constant $c_{p}>0$ such that for all $n\in \mathbb{N}$,
\begin{equation*}
\Vert Z_{n}\Vert _{L^{p}(\Omega )}\leqslant c_{p}\cdot n^{-\gamma },
\end{equation*}
then for all $\varepsilon >0$ there exists a random variable $\alpha _{\varepsilon }$ which is almost surely finite such that
\begin{equation*}
|Z_{n}|\leqslant \alpha _{\varepsilon }\cdot n^{-\gamma +\varepsilon }\quad \mbox{almost surely}
\end{equation*}
for all $n\in \mathbb{N}$. Moreover, $\E|\alpha _{\varepsilon}|^{p}<\infty $ for all $p\geq 1$.
\end{lemma}
We can apply Lemma~\ref{Borel-Cantelli} as soon as $\norm{\widehat{f}_T(Z)  - f_Z}_{L^p} \leq C_p / \sqrt{T}$ for every $p \geq 1$.  We have shown that  for $p = 2$. However, using the inequality between $L^p$ norms and $L^q$ norms, one has that the same bound holds for $p \in [1, 2)$. Finally, one can apply the hypercontractivity property~\eqref{hypercontractivity} in Wiener chaos to get the result for all $p \geq 2$.

Therefore, $\widehat{f}_n(Z)$ converges almost surely to $f_Z$ as $n \to \infty$ (and $n \in \mathbb{N})$. We now use the following more technical result. Its proof is delayed to the Section~\ref{sec:technical}.

\begin{lemma}\label{continuous-Borel-Cantelli} Let $\{u_t, t\geq0\}$ be a continuous stochastic process such that for any $p\geq1$, there is a positive constant $C_p>0$ such that $\sup_{t\geq0}\E[|u_t|^p]<C_p$. In addition, we assume
 \begin{align}
\notag \frac{1}{n}\int_0^n u_t dt\longrightarrow0\quad \mbox{almost surely as } n\rightarrow\infty.
\end{align}
 Then,
 \[\frac{1}{T}\int_0^T u_t dt\longrightarrow0\quad \mbox{almost surely as } T\rightarrow\infty.\]
\end{lemma}
Note that by stationarity $\sup_{t \geq 0} \E[|Z_t|^p] = \E[|Z_0|^p < C_p$ for some $C_p > 0$. Therefore, by Lemma~\ref{continuous-Borel-Cantelli}, $\widehat{f}_T(Z)$ converges almost surely to $f_Z$ as $T \to \infty$ (and $T \in \R_+$).
\end{proof}

Now, we turn to the proof of Theorem~\ref{CLT-hat-STAT}.
\subsection{Proof of Theorem~\ref{CLT-hat-STAT}.} The random variable
\begin{align}
\notag \frac{\widehat{f}_T(Z) - f_Z}{\sqrt{Var(\widehat{f}_T(Z) - f_Z)}} = \frac{V_T(Z)}{ \sqrt{\E [V_T(Z)^2]}}
\end{align}
 is centered and normalized.  Then the fourth moment theorem~\eqref{optimal berry esseen} applies and then
\begin{align}
\notag d_{TV}\left(\frac{\widehat{f}_T(Z) - f_Z}{\sqrt{Var(\widehat{f}_T(Z) - f_Z)}} ,\mathcal{N}\right) = & \quad d_{TV} \left(\frac{V_T(Z)}{ \sqrt{\E [V_T(Z)^2]}}, \mathcal{N} \right)\\
\label{eq:bound_tv}  \leq & \quad C\max \left\{ \frac{\kappa_3
(V_T(Z))}{\E[V_T(Z)^2]^{3/2}},
\frac{\kappa_4(V_T(Z))}{\E[V_T(Z)^2]^2} \right\}.
\end{align}
We are left to study the third and fourth cumulants of $V_T(Z)$. The following technical result is a slight modification of~\cite[Propositions 6.3, 6.4]{BBNP}:
\begin{lemma}\label{rate-cumulants} For every $T>0$,
\begin{align}
\label{third-cumulant} \left|\kappa_{3}(V_{T}(Z))\right| \leq & \quad \frac{8}{\sqrt{T}}\left( \int_{-T}^{T} \left| \rho(t)\right|^{3/2}dt \right)^{2}, \\
 \label{fourth-cumulant} \left|\kappa_{4}(V_{T}(Z))\right|\leq & \quad  \frac{48}{T} \left( \int_{-T}^{T} \left| \rho(t)\right|^{4/3} dt\right)^{3}.
\end{align}
\end{lemma}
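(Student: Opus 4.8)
The plan is to use that $V_T(Z)=I_2(g_T)$ lies in the second Wiener chaos, with symmetric kernel $g_T\coloneqq\frac{1}{\sqrt T}\int_0^T\stepid{t}^{\otimes2}\,dt\in\HC^{\odot2}$. For a double integral, the cumulant formulas (which follow from the product formula~\eqref{eq:product} and the isometry~\eqref{eq:isometry}, cf.~\cite[Chapter 2]{NP-book}) read
\[
\kappa_3(I_2(g))=8\,\inner{g\otimes_1 g}{g}_{\HC^{\otimes2}},\qquad \kappa_4(I_2(g))=48\,\norm{g\otimes_1 g}_{\HC^{\otimes2}}^2,
\]
so that the constants $8$ and $48$ of~\eqref{third-cumulant}--\eqref{fourth-cumulant} appear automatically and the task is to estimate these two contraction quantities for $g=g_T$. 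Using $\inner{\stepid{t}}{\stepid{s}}_\HC=\rho(t-s)$, hence $\stepid{t}^{\otimes2}\otimes_1\stepid{s}^{\otimes2}=\rho(t-s)\,\stepid{t}\otimes\stepid{s}$, and Fubini (valid since $\rho$ is bounded and continuous on a bounded domain), one gets
\begin{align}
\notag \inner{g_T\otimes_1 g_T}{g_T}_{\HC^{\otimes2}}&=\frac{1}{T^{3/2}}\int_{[0,T]^3}\rho(t-s)\rho(t-u)\rho(s-u)\,dt\,ds\,du,\\
\notag \norm{g_T\otimes_1 g_T}_{\HC^{\otimes2}}^2&=\frac{1}{T^{2}}\int_{[0,T]^4}\rho(t_1-t_2)\rho(t_3-t_4)\rho(t_1-t_3)\rho(t_2-t_4)\,dt_1\,dt_2\,dt_3\,dt_4,
\end{align}
i.e.\ a ``triangle'' integral of $\rho$ governs $\kappa_3$ and a ``$4$-cycle'' integral governs $\kappa_4$.

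Next I would reduce the dimension of each integral. Bounding the integrand by its modulus and replacing the differences of consecutive arguments by new variables, while keeping one original variable free (its admissible range has Lebesgue measure $\le T$, and the remaining difference variable is likewise confined to $[-T,T]$), the evenness of $\rho$ gives
\begin{align}
\notag |\kappa_3(V_T(Z))|&\le\frac{8}{\sqrt T}\int_{[-T,T]^2}|\rho(a)\rho(b)\rho(a-b)|\,da\,db,\\
\notag |\kappa_4(V_T(Z))|&\le\frac{48}{T}\int_{[-T,T]^3}|\rho(a)\rho(b)\rho(c)\rho(a+b+c)|\,da\,db\,dc.
\end{align}
Now put $\phi\coloneqq|\rho|\cdot\mathbf{1}_{[-T,T]}$, an even nonnegative function with $\norm{\phi}_{L^r(\R)}^r=\int_{-T}^T|\rho(t)|^r\,dt$. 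For the first integral, $\int_{\R^2}\phi(a)\phi(b)\phi(a-b)\,da\,db=\inner{\phi}{\phi*\phi}_{L^2(\R)}\le\norm{\phi}_{L^{3/2}}\norm{\phi*\phi}_{L^{3}}\le\norm{\phi}_{L^{3/2}}^3$, by H\"older with exponents $(3/2,3)$ and Young's inequality $\norm{\phi*\phi}_{L^3}\le\norm{\phi}_{L^{3/2}}^2$ (since $\tfrac13=\tfrac23+\tfrac23-1$); as $\norm{\phi}_{L^{3/2}}^3=(\int_{-T}^T|\rho|^{3/2})^2$, this is~\eqref{third-cumulant}. For the second, expanding the convolutions and using that $\phi$ is even gives $\int_{\R^3}\phi(a)\phi(b)\phi(c)\phi(a+b+c)\,da\,db\,dc=\norm{\phi*\phi}_{L^2(\R)}^2$, and Young's inequality $\norm{\phi*\phi}_{L^2}\le\norm{\phi}_{L^{4/3}}^2$ (since $\tfrac12=\tfrac34+\tfrac34-1$) yields $\norm{\phi}_{L^{4/3}}^4=(\int_{-T}^T|\rho|^{4/3})^3$, which is~\eqref{fourth-cumulant}.

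I expect the only real difficulty to be bookkeeping: tracking the powers of $T$ through the normalization and the change of variables, and selecting the Young exponents so that a single $L^{3/2}$- (resp.\ $L^{4/3}$-) norm of $\rho$ survives on the right-hand side. Since the statement is declared a slight modification of~\cite[Propositions 6.3, 6.4]{BBNP}, one could instead cite those results after noting that their cumulant estimates depend on the Gaussian structure only through the covariance $\rho$; but the self-contained computation above seems cleaner.
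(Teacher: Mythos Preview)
Your proposal is correct and follows essentially the same route as the paper's proof: both identify $\kappa_3$ and $\kappa_4$ with the ``triangle'' and ``$4$-cycle'' integrals of $\rho$ over $[0,T]^3$ and $[0,T]^4$, reduce one dimension by a change of variables (gaining a factor $T$ and restricting the remaining differences to $[-T,T]$), introduce $\phi=\rho_T=|\rho|\,\mathbf{1}_{[-T,T]}$, and finish with H\"older plus Young's convolution inequality with exponents $3/2$ and $4/3$ respectively. The only cosmetic differences are that you invoke the second-chaos cumulant identities $\kappa_3(I_2(g))=8\langle g\otimes_1 g,g\rangle$ and $\kappa_4(I_2(g))=48\|g\otimes_1 g\|^2$ directly, whereas the paper rederives them from the product formula~\eqref{eq:product}; and for $\kappa_4$ you reduce fully to a three-variable integral $\int_{\R^3}\phi(a)\phi(b)\phi(c)\phi(a+b+c)=\|\phi*\phi\|_{L^2}^2$, while the paper keeps $(s,t)\in[0,T]^2$ and integrates $(u,v)$ over $\R^2$ to reach $\int_{[0,T]^2}((\rho_T*\rho_T)(t-s))^2\,ds\,dt\le T\|\rho_T*\rho_T\|_{L^2}^2$.
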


\begin{proof}
The proof follows the same approach as in~\cite{BBNP} and is included in Section~\ref{sec:technical}. We  note that our proof is more detailed than the one in~\cite{BBNP}.
\end{proof}

The corresponding bound for the Wasserstein distance follows from Remark~\ref{rem:wass_tv}.
Thus Theorem~\ref{CLT-hat-STAT} is established. At this point we present two corollaries. First, we study the asymptotic behavior of the bound~\eqref{eq:varphi_bound} under the additional assumption of the decay of correlations of $Z_t$. We have the following:

\begin{corollary}\label{cor:explicit_bounds} Assume  there exists  $0<\beta<\frac34$ such that, $|\rho(t)| = \mathcal{O}(t^{2 \beta -2})$. Then,
\begin{align}
\label{eq:bound_tv_explicit} d_{TV}\left(\frac{\widehat{f}_T(Z) - f_Z}{\sqrt{ Var(\widehat{f}_T(Z) - f_Z)} }, \mathcal{N}\right) \leq C \left\{ \begin{array}{ll}
T^{-1/2} & \mbox{ if }0<\beta <\frac{2}{3}, \\
\\
\log^2(T) T^{-1/2} & \mbox{ if }\beta =\frac23, \\ ~~ &  \\
T^{6\beta-\frac92} & \mbox{ if }\frac23<\beta<\frac34.
\end{array}%
\right.
\end{align}

\end{corollary}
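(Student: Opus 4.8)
The plan is to feed the decay hypothesis $|\rho(t)| = \mathcal{O}(t^{2\beta-2})$ into the bound~\eqref{eq:varphi_bound} from Theorem~\ref{CLT-hat-STAT}, and track which of the two terms in the $\max$ dominates as $T \to \infty$. First I would record that, since $\beta < 3/4$, we have $\int_\R \rho^2(r)\,dr < \infty$: indeed $\rho(t)^2 = \mathcal{O}(t^{4\beta-4})$ and $4\beta - 4 < -1$, so Theorem~\ref{CLT-hat-STAT} applies and $\E[V_T(Z)^2] \to \sigma_Z^2 > 0$; hence the normalization by $\E[V_T(Z)^2]^{3/2}$ and $\E[V_T(Z)^2]^2$ only contributes a bounded factor and can be absorbed into $C$. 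So it suffices to estimate the two numerators, i.e. $T^{-1/2}\big(\int_{-T}^T |\rho(t)|^{3/2}\,dt\big)^2$ and $T^{-1}\big(\int_{-T}^T |\rho(t)|^{4/3}\,dt\big)^3$.

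The computation is then a matter of elementary calculus on power integrals, split at the point where the exponent crosses $-1$. For the third-cumulant term, $|\rho(t)|^{3/2} = \mathcal{O}(t^{3\beta-3})$; the exponent $3\beta-3$ equals $-1$ exactly when $\beta = 2/3$. For $\beta < 2/3$ the integral $\int_{1}^T t^{3\beta-3}\,dt$ converges, so $\big(\int_{-T}^T |\rho(t)|^{3/2}\,dt\big)^2 = \mathcal{O}(1)$ and the term is $\mathcal{O}(T^{-1/2})$; for $\beta = 2/3$ the integral is $\mathcal{O}(\log T)$, giving $\mathcal{O}(\log^2(T)\,T^{-1/2})$; for $\beta > 2/3$ the integral is $\mathcal{O}(T^{3\beta-2})$, so the term is $T^{-1/2} \cdot \mathcal{O}(T^{6\beta-4}) = \mathcal{O}(T^{6\beta - 9/2})$. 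For the fourth-cumulant term, $|\rho(t)|^{4/3} = \mathcal{O}(t^{(8\beta-8)/3})$, and $(8\beta-8)/3 = -1$ when $\beta = 5/8 < 2/3$. So for $\beta < 5/8$ this integral converges and the term is $\mathcal{O}(T^{-1})$; for $\beta \geq 5/8$ (with a logarithmic borderline at $5/8$ that is harmless since $\beta = 5/8 < 2/3$ and the other term already gives $T^{-1/2}$) the integral is $\mathcal{O}(T^{(8\beta-5)/3})$, so the term is $T^{-1} \cdot \mathcal{O}(T^{8\beta-5}) = \mathcal{O}(T^{8\beta-6})$.

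Finally I would compare the two exponents in each regime to see which governs the $\max$. For $\beta < 2/3$: the first term is $T^{-1/2}$ (or $\log^2 T \cdot T^{-1/2}$ at $\beta = 2/3$), while the second is $T^{-1}$ when $\beta < 5/8$ and $T^{8\beta-6} \leq T^{-5/6}$ when $5/8 \leq \beta \leq 2/3$; in all cases $T^{-1/2}$ dominates, matching the claimed bound. For $2/3 < \beta < 3/4$: the first term is $T^{6\beta-9/2}$ and the second is $T^{8\beta-6}$; since $6\beta - 9/2 > 8\beta - 6 \iff 3/2 > 2\beta \iff \beta < 3/4$, the first term dominates on the whole range, giving $T^{6\beta-9/2}$. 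This exhausts the three cases in~\eqref{eq:bound_tv_explicit}. There is no real obstacle here; the only points requiring care are keeping the $\mathcal{O}$-constants uniform (they are, since the decay hypothesis is uniform in $t$ and the finite-$T$ contribution from $|t| \leq 1$ is bounded by continuity of $\rho$) and checking that the borderline exponent $\beta = 5/8$ for the fourth cumulant never becomes the dominant contribution, which it does not because it lies strictly inside the regime where the third-cumulant term is already $T^{-1/2}$.
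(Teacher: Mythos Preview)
Your proof is correct and follows essentially the same route as the paper: bound $\int_{-T}^{T}|\rho|^{3/2}$ and $\int_{-T}^{T}|\rho|^{4/3}$ by splitting at $|t|=1$ and integrating the power $t^{p(2\beta-2)}$, then feed these into the two terms of~\eqref{eq:varphi_bound} and compare exponents. One minor slip: for $5/8\le\beta\le 2/3$ the fourth-cumulant term is at most $T^{-2/3}$, not $T^{-5/6}$, but since $-2/3<-1/2$ your conclusion that the third-cumulant term dominates is unaffected.
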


\begin{proof}Note that there is a constant $C>0$ such that $|\rho(s)| < C$ for $s \in [-1,1]$. Then, since
\begin{align}
\notag \int_{-T}^T |\rho(t)|^p dt \leq C_p \left( 1 + \int_1^T |\rho(t)|^p dt \right),
\end{align}
for $p =3/2$ and $p = 4/3$, one can establish the following bounds on $\kappa_3(V_T(Z))$ and $\kappa_4(V_T(Z))$:
\begin{eqnarray}
\left|\kappa_{3}(V_{T}(Z))\right| & \leq   C \left\{ \begin{array}{ll} T^{-1/2} & \mbox{ if }0<\beta <\frac{2}{3}, \\
\\
\log^2(T) T^{-1/2} & \mbox{ if }\beta =\frac23, \\ ~~ &  \\
T^{6\beta-\frac92} & \mbox{ if }\frac23<\beta<\frac34,
\end{array}%
\right.\label{explicit-third-cumulant}
\end{eqnarray}
and
 \begin{eqnarray}
 \left|\kappa_{4}(V_{T}(Z))\right| & \leq   C \left\{ \begin{array}{ll}
T^{-1} & \mbox{ if }0<\beta <\frac58, \\
\\
\log^3(T) T^{-1} & \mbox{ if }\beta =\frac58, \\ ~~ &  \\
T^{8\beta-6} & \mbox{ if }\frac58<\beta<\frac34.
\end{array}%
\right.\label{explicit-fourth-cumulant}
\end{eqnarray}
The result follows by a direct application of Theorem~\ref{CLT-hat-STAT}.
\end{proof}

Next, using the convergence of $\E[V_T(Z)^2]$, one can also establish the following corollary to Theorem~\ref{CLT-hat-STAT}.

\begin{corollary}\label{cor:sigmaz}  There exists a constant $C>0$ such that, for all $T>0$,
\begin{align}
\notag d_{TV}\left(\frac{\sqrt{T}}{\sigma_Z} (\widehat{f}_T(Z) - f_Z), \mathcal{N} \right) \leq \varphi_T(Z) + 2\left| 1- \frac{\sigma_Z^2}{E(V_{T}(Z)^2)}\right|
\end{align}
where $\varphi_T(Z)$ is as in Theorem~\ref{CLT-hat-STAT}.
Moreover, if  there exists  $0<\beta<\frac34$ such that, $|\rho(t)| = \mathcal{O}(t^{2 \beta -2})$,
\begin{align}
\label{eq:explicit_sz} d_{TV}\left(\frac{\sqrt{T}}{\sigma_Z} (\widehat{f}_T(Z) - f_Z), \mathcal{N} \right) \leq C \left\{ \begin{array}{ll}
T^{-1/2} & \mbox{ if }0<\beta \leq \frac58, \\ ~~ &  \\
T^{4\beta-3} & \mbox{ if }\frac58<\beta<\frac34.
\end{array} \right.
\end{align}
The same result holds for the Wasserstein distance.
\end{corollary}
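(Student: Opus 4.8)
The plan is to derive Corollary~\ref{cor:sigmaz} from Theorem~\ref{CLT-hat-STAT} by a triangle-inequality argument that exchanges the random normalization $\sqrt{Var(\widehat f_T(Z)-f_Z)}=\sqrt{\E[V_T(Z)^2]}$ for the deterministic one $\sigma_Z/\sqrt T$. First I would recall from~\eqref{chaos-V} that $V_T(Z)=\sqrt T(\widehat f_T(Z)-f_Z)$ lives in the second Wiener chaos, so $\tfrac{\sqrt T}{\sigma_Z}(\widehat f_T(Z)-f_Z)=V_T(Z)/\sigma_Z$ and $V_T(Z)/\sqrt{\E[V_T(Z)^2]}$ differ only by the scalar factor $\lambda_T\coloneqq \sqrt{\E[V_T(Z)^2]}/\sigma_Z$, which by Lemma~\ref{lem:ft_strongc} (specifically~\eqref{eq:VT2_finite}) satisfies $\lambda_T\uparrow 1$. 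The key elementary estimate is that for any random variable $F$ with $\E[F^2]=1$ and any $\lambda>0$,
\begin{align}
\notag d_{TV}(\lambda F,\mathcal N)\leq d_{TV}(F,\mathcal N)+d_{TV}(\lambda\mathcal N,\mathcal N)\leq d_{TV}(F,\mathcal N)+C|1-\lambda^2|,
\end{align}
where the second inequality is the standard total-variation bound between two centered Gaussians with variances $\lambda^2$ and $1$ (and the analogous Wasserstein bound $d_W(\lambda\mathcal N,\mathcal N)=|1-\lambda|\,\E|\mathcal N|\leq|1-\lambda^2|$, using $|1-\lambda|\le|1-\lambda^2|$ when $\lambda\le1$). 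Applying this with $F=V_T(Z)/\sqrt{\E[V_T(Z)^2]}$ and $\lambda=1/\lambda_T$, and noting $|1-\lambda^{-2}|=|1-\sigma_Z^2/\E[V_T(Z)^2]|$, gives the first displayed bound of the corollary with $\varphi_T(Z)$ supplied by Theorem~\ref{CLT-hat-STAT}.

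For the explicit rate~\eqref{eq:explicit_sz} under the hypothesis $|\rho(t)|=\mathcal O(t^{2\beta-2})$, I would estimate the extra term $|1-\sigma_Z^2/\E[V_T(Z)^2]|$. From~\eqref{eq:vt_conv}, $\sigma_Z^2-\E[V_T(Z)^2]=4\int_0^\infty\big[\mathbf 1_{u>T}+\tfrac uT\mathbf 1_{u\le T}\big]\rho(u)^2\,du$, i.e.\ $\sigma_Z^2-\E[V_T(Z)^2]=\tfrac4T\int_0^T u\,\rho(u)^2\,du+4\int_T^\infty\rho(u)^2\,du$. Plugging in $\rho(u)^2=\mathcal O(u^{4\beta-4})$ and splitting at $u=1$: the tail integral is $\mathcal O(T^{4\beta-3})$ (the exponent $4\beta-3<0$ since $\beta<3/4$), and $\tfrac1T\int_1^T u^{4\beta-3}\,du$ is $\mathcal O(T^{4\beta-3})$ when $4\beta-3>-1$ i.e.\ $\beta>1/2$, and $\mathcal O(T^{-1})$ (up to a $\log$ at $\beta=1/2$, which is absorbed) when $\beta\le1/2$. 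Since $\sigma_Z^2>0$ and $\E[V_T(Z)^2]\uparrow\sigma_Z^2$, the denominator is bounded below for large $T$, so $|1-\sigma_Z^2/\E[V_T(Z)^2]|=\mathcal O(\max\{T^{-1},T^{4\beta-3}\})$; for $\beta\le 5/8$ this is $\mathcal O(T^{-1/2})$ and for $5/8<\beta<3/4$ it is $\mathcal O(T^{4\beta-3})$. Combining with the bound~\eqref{eq:bound_tv_explicit} on $\varphi_T(Z)$ from Corollary~\ref{cor:explicit_bounds} and observing that in each regime the $\varphi_T$ term is dominated by the new term (note $6\beta-\tfrac92<4\beta-3$ for $\beta<3/4$, and $\log^2(T)T^{-1/2}$ at $\beta=2/3$ is $\mathcal O(T^{4\beta-3})=\mathcal O(T^{-1/3})$), yields~\eqref{eq:explicit_sz}.

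The only mildly delicate point is the Gaussian-versus-Gaussian total-variation estimate $d_{TV}(\lambda\mathcal N,\mathcal N)\le C|1-\lambda^2|$; I would justify it by direct computation of the $L^1$ distance of the two densities, or simply cite it, and likewise check the small bookkeeping that $|1-\lambda^{-2}|\le C|1-\lambda^2|$ for $\lambda$ bounded away from $0$ (valid here since $\lambda_T\to1$). The Wasserstein version follows verbatim via Remark~\ref{rem:wass_tv} and the explicit formula $d_W(\lambda\mathcal N,\mathcal N)=|1-\lambda|\sqrt{2/\pi}$. I do not anticipate a genuine obstacle; the main care is ensuring the case split thresholds ($\beta=5/8$ for the new term versus $\beta=5/8$ already appearing in the fourth-cumulant bound) line up, which they do.
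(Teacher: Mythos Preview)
Your approach is essentially the paper's: reduce to Theorem~\ref{CLT-hat-STAT} plus a rescaling cost, then for~\eqref{eq:explicit_sz} estimate $\sigma_Z^2-\E[V_T(Z)^2]$ from the representation~\eqref{eq:vt_conv} and combine with the bounds of Corollary~\ref{cor:explicit_bounds}, exactly as the paper does. The only difference is that the paper packages the rescaling step as a direct citation of Lemma~\ref{upper-d_TV-lemma} (\cite[Lemma~5.1]{DEV}), which immediately produces the term $2\bigl|1-\sigma_Z^2/\E[V_T(Z)^2]\bigr|$ with the stated constant and form, whereas you re-derive the needed special case via the triangle inequality and a Gaussian--Gaussian total-variation bound.

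One small bookkeeping slip: since $V_T(Z)/\sigma_Z=\lambda_T\,F$ with $\lambda_T=\sqrt{\E[V_T(Z)^2]}/\sigma_Z\le 1$, the correct scaling factor is $\lambda=\lambda_T$, not $1/\lambda_T$; your bound then comes out as $C\bigl|1-\E[V_T(Z)^2]/\sigma_Z^2\bigr|$ rather than $2\bigl|1-\sigma_Z^2/\E[V_T(Z)^2]\bigr|$. As you anticipate at the end, the two are comparable once $\lambda_T$ is bounded away from~$0$, so this does not affect~\eqref{eq:explicit_sz}, but to get the first displayed inequality of the corollary in its exact stated form you would want to invoke Lemma~\ref{upper-d_TV-lemma} as the paper does.
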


\begin{proof} The first part follows from the following technical result:

\begin{lemma}[{\cite[Lemma 5.1]{DEV}}]\label{upper-d_TV-lemma}Let  $\mu \in \R$ and $\sigma>0$. Then , for every integrable real-valued random variable $F$,
\begin{align}
\notag d_{TV}\left( \mu+\sigma F,\mathcal{N}\right) \leq d_{TV}\left( F,\mathcal{N} \right)+ \sqrt{\frac{\pi}{2}}|\mu|+2\left|1-\frac{1}{\sigma^2}\right|.
\end{align}
\end{lemma}
Therefore,
\begin{align}
\notag d_{TV}\left(\frac{\sqrt{T}}{\sigma_Z} (\widehat{f}_T(Z) - f_Z), \mathcal{N} \right) \leq d_{TV}\left(\frac{\widehat{f}_T(Z) - f_Z}{\sqrt{Var(\widehat{f}_T(Z) - f_Z)}}, \mathcal{N} \right) + 2 \left| 1 -  \frac{\sigma_Z^2}{\E[ V_T(Z)^2] }\right|,
\end{align}
by the definition of $V_T(Z)$ and the fact that $\E[ \widehat{f}_T(Z)] = f_Z$. Recall that $\E[V_T(Z)^2] \uparrow \sigma_Z^2$. Then the second term on the right-hand side above is bounded by $C |\E[V_T(Z)^2 - \sigma_Z^2|$ for some $C > 0$.

Next, since $|\rho(t)| = \mathcal{O}(t^{2 \beta -2})$, and using the representation~\eqref{eq:vt_conv},
\begin{align}
\notag | \E[ V_T(Z)^2 - \sigma_Z^2| = & \quad 4 \int_T^\infty \rho(u)^2 du + 4 \int_0^T \frac{u}{T} \rho(u)^2 du \\
\notag \leq & \quad 4C \left( \int_T^\infty u^{4 \beta - 4} du + \frac{1}{T} \int_0^1 \rho(u)^2 du + \frac{1}{T} \int_1^T u^{4 \beta - 3} du \right).
\end{align}
A direct computation yields that,
\begin{align}
\label{eq:bound_vt_sz} | \E[ V_T(Z)^2 - \sigma_Z^2|  \leq C \left\{
\begin{array}{ll}
T^{-1} & \mbox{ if }0<\beta <\frac{1}{2}, \\
\\
\log(T) T^{-1} & \mbox{ if }\beta =\frac12, \\
~~ &  \\
T^{4\beta-3} & \mbox{ if }\frac12<\beta<\frac34,
\end{array}
\right.
\end{align}
Finally, the bound~\eqref{eq:explicit_sz}  follows from~\eqref{eq:bound_tv_explicit} and~\eqref{eq:bound_vt_sz}.

\end{proof}

\subsection{Discrete-time observations}
In this section we estimate the limiting variance $f_Z$ based on discrete high-frequency data in time of $Z$, by considering the discrete version $\widetilde{f}_n(Z)$ of the estimator $\widehat{f}_T(Z)$:
\begin{align}
\notag \widetilde{f}_n(Z) \coloneqq \frac{1}{n} \sum_{i =1}^{n} Z_{t_{i}}^{2},
\end{align}
where $t_{i} = i \Delta_{n}$, $  i=0,\ldots, n $, $\Delta_{n}\rightarrow0$ and  $ T_{n} \coloneqq n \Delta_{n}\rightarrow \infty$.

We will assume an additional property of the process $Z$. It would allow us to compare quantitatively the two estimators $\widehat{f}_T(Z)$ and $\widetilde{f}_n(Z)$:

\begin{assumption}\label{assume:helix} For all $s,t \in \R_+$ such that $|s-t|$ is small enough,
\begin{eqnarray}\label{helix-property}
\E[|Z_{t}-Z_{s}|^{2}] \leq C |t-s|^{2 \alpha},
\end{eqnarray}
for some   constant $0<\alpha<1$.
\end{assumption}

Similarly to the continuous alternative $\widehat{f}_T(Z)$, we first show the following:

\begin{lemma} The estimator $\widetilde{f}_n(Z)$ is unbiased. Assume that Assumption~\ref{assume:helix} holds. Then,
\begin{align}
\label{estimate-difference}  \E |\widehat{f}_T(Z) - \widetilde{f}_n(Z)|^2 \leq C_\alpha \Delta_n^{2 \alpha},
\end{align}
where $C_\alpha > 0$ is a constant that depends only on $\alpha$. Moreover, if $n \Delta_n^{\eta} \to 0, \mbox {as } n \to \infty$ for some $\eta > 1$, then $\widetilde{f}_n(Z)$ is strongly consistent.
\end{lemma}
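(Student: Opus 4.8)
The plan is to establish the three assertions in turn, the middle one being the real content.

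\textbf{Unbiasedness.} By stationarity $\E[Z_{t_i}^2]=\E[Z_0^2]=f_Z$ for every $i$, so $\E[\widetilde{f}_n(Z)]=f_Z$; nothing more is required.

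\textbf{The $L^2$ comparison~\eqref{estimate-difference} (with $T=T_n$).} I would first realise $\widehat{f}_{T_n}(Z)-\widetilde{f}_n(Z)$ as a single sum over the mesh intervals: since $1/n=\Delta_n/T_n$ and $t_0=0$, $t_n=T_n$,
\[
\widehat{f}_{T_n}(Z)-\widetilde{f}_n(Z)=\frac{1}{T_n}\sum_{i=1}^n\int_{t_{i-1}}^{t_i}\bigl(Z_t^2-Z_{t_i}^2\bigr)\,dt .
\]
Minkowski's inequality, applied both to the finite sum and to each integral, reduces the problem to the pointwise bound $\norm{Z_t^2-Z_{t_i}^2}_{L^2}\leq C\Delta_n^{\alpha}$ valid for $t\in[t_{i-1},t_i]$, since then
\[
\norm{\widehat{f}_{T_n}(Z)-\widetilde{f}_n(Z)}_{L^2}\leq\frac{1}{T_n}\sum_{i=1}^n\int_{t_{i-1}}^{t_i}C\Delta_n^{\alpha}\,dt=\frac{C\Delta_n^{\alpha}}{T_n}\,n\Delta_n=C\Delta_n^{\alpha},
\]
and squaring yields~\eqref{estimate-difference}. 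For the pointwise bound I would factor $Z_t^2-Z_{t_i}^2=(Z_t-Z_{t_i})(Z_t+Z_{t_i})$, apply Cauchy--Schwarz to obtain $\norm{Z_t^2-Z_{t_i}^2}_{L^2}\leq\norm{Z_t-Z_{t_i}}_{L^4}\norm{Z_t+Z_{t_i}}_{L^4}$, and then use that both factors are centred Gaussian: the explicit fourth Gaussian moment (a special case of~\eqref{hypercontractivity}) together with Assumption~\ref{assume:helix} gives $\norm{Z_t-Z_{t_i}}_{L^4}\leq 3^{1/4}\bigl(\E|Z_t-Z_{t_i}|^2\bigr)^{1/2}\leq C|t-t_i|^{\alpha}$ as soon as $\Delta_n$ is small enough (hence for $n$ large), while $\norm{Z_t+Z_{t_i}}_{L^4}\leq 2\norm{Z_0}_{L^4}$ is finite by stationarity. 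The finitely many remaining values of $n$ are absorbed into the constant, using that $\norm{\widehat{f}_{T_n}(Z)}_{L^2}$ and $\norm{\widetilde{f}_n(Z)}_{L^2}$ are bounded uniformly in $n$ (the former by~\eqref{eq:VT2_finite}, the latter by stationarity and the Gaussian fourth moment).

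\textbf{Strong consistency.} By~\eqref{eq:var_rep} one has $Z_t^2-Z_{t_i}^2=I_2\bigl(\stepid{t}^{\otimes 2}-\stepid{t_i}^{\otimes 2}\bigr)$, so $\widehat{f}_{T_n}(Z)-\widetilde{f}_n(Z)$ lies in the second Wiener chaos; hypercontractivity~\eqref{hypercontractivity} therefore upgrades the $L^2$ estimate above to $\norm{\widehat{f}_{T_n}(Z)-\widetilde{f}_n(Z)}_{L^p}\leq C_p\Delta_n^{\alpha}$ for every $p\geq1$ (the range $p<2$ being covered by monotonicity of $L^p$-norms). The hypothesis $n\Delta_n^{\eta}\to0$ with $\eta>1$ gives $\Delta_n\leq n^{-1/\eta}$ for $n$ large, hence $\norm{\widehat{f}_{T_n}(Z)-\widetilde{f}_n(Z)}_{L^p}\leq C_p\,n^{-\alpha/\eta}$ with exponent $\alpha/\eta>0$. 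Lemma~\ref{Borel-Cantelli} then forces $\widehat{f}_{T_n}(Z)-\widetilde{f}_n(Z)\to0$ almost surely; since $T_n\to\infty$, Lemma~\ref{lem:ft_strongc} gives $\widehat{f}_{T_n}(Z)\to f_Z$ almost surely, and combining the two limits yields $\widetilde{f}_n(Z)\to f_Z$ almost surely.

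\textbf{Expected main obstacle.} The one genuinely technical step is the uniform pointwise estimate $\norm{Z_t^2-Z_{t_i}^2}_{L^2}\leq C\Delta_n^{\alpha}$, and in particular checking that the constant there does not depend on $n$; the other steps are bookkeeping. The assumption $\eta>1$ enters only to convert the rate $\Delta_n^{\alpha}$ into a negative power of $n$, which is precisely the form required by the Borel--Cantelli-type Lemma~\ref{Borel-Cantelli}.
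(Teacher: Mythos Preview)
Your proof is correct and follows essentially the same route as the paper: express the difference as a sum of integrals over the mesh intervals, control each summand via the factorisation $Z_t^2-Z_{t_i}^2=(Z_t-Z_{t_i})(Z_t+Z_{t_i})$ together with Cauchy--Schwarz, hypercontractivity and Assumption~\ref{assume:helix}, and then pass to almost-sure convergence by hypercontractivity plus Lemma~\ref{Borel-Cantelli}. The one minor difference is that where you invoke Minkowski's inequality directly on the sum and integral, the paper first applies the arithmetic--quadratic mean inequality $(\sum a_i)^2\leq n\sum a_i^2$ to the sum and then expands the square of each integral; your version is slightly cleaner but otherwise equivalent.
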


\begin{proof} The first property follows from the fact that the process $Z$ is stationary. To show strong consistency define, similarly to~\eqref{chaos-V},
\begin{align}
\label{eq:def_U} U_{n}(Z) \coloneqq \sqrt{T_{n}} \left( \widetilde{f}_n(Z) - f_Z\right) = \frac{1}{\sqrt{T_{n}}} \sum_{i =1}^{n} (Z_{t_{i}}^{2}-\E Z_{t_{i}}^{2}).
\end{align}
Then
\begin{align}
\label{eq:diff} \widetilde{f}_n(Z) - f_Z =  \frac{U_n(Z)}{\sqrt{T_n}} = \frac{V_{T_n}(Z)}{\sqrt{T_n}} + \frac{U_n(Z) - V_{T_n}(Z)}{\sqrt{T_n}}.
\end{align}
We know from~\eqref{eq:VT2_finite} and Lemma~\ref{Borel-Cantelli} that $V_{T_n} / \sqrt{T_n}$ converges almost surely to $0$. Let $\delta_n(Z) \coloneqq U_n(Z) - V_{T_n}(Z) = \sqrt{T_n} (\widetilde{f}_n(Z) - \widehat{f}_T(Z))$. We estimate the second moment of $\delta_n(Z)$:
\begin{align}
\notag \E[ \delta_n(Z)^2] \leq   \E \left[ \left(\frac{1}{\sqrt{T_n}} \sum_{i = 1}^n \int_{t_{i-1}}^{t_i} |Z_{t_i}^2 - Z_t^2| dt \right)^2 \right]  \leq   \frac{n}{T_n} \sum_{i = 1}^n \E \left[ \left( \int_{t_{i-1}}^{t_i}|Z_{t_i}^2 - Z_t^2| dt \right)^2 \right],
\end{align}
where we have applied the inequality between arithmetic mean and quadratic mean. Next, by the Cauchy-Schwarz inequality, for $s,t \in [t_{i-1}, t_i]$,
\begin{align}
\notag \E[ |Z_{t_i}^2 - Z_t^2| |Z_{t_i}^2 - Z_s^2| ] \leq & \quad  \left( \E [ (Z_{t_i}^2 - Z_t^2)^2  ] \E [ (Z_{t_i} - Z_s)^2 ]  \right)^{1/2} \\
\notag  \leq & \quad  \left( \E [ (Z_{t_i} - Z_t)^4  ] \E [ (Z_{t_i} - Z_s)^4 ] \E [ (Z_{t_i} + Z_t)^4  ] \E [ (Z_{t_i} + Z_s)^4 ]  \right)^{1/4}.
\end{align}
Now, by the hypercontractivity property~\eqref{hypercontractivity} and~\eqref{helix-property}
\begin{align}
\notag  \E [ (Z_{t_i} - Z_t)^4]^{1/4} \leq C \E [ (Z_{t_i} - Z_t)^2]^{1/2}  \leq C |t_i - t|^{ \alpha}.
\end{align}
Next, using stationarity
\begin{align}
\notag \E[ (Z_{t_i} + Z_t)^4 ] \leq 4 \E[ (Z_{t_i}^2 + Z_t)^2 ] \leq 16 \E[Z_0^4].
\end{align}
 Therefore, for some constant $C > 0$ depending on the fourth moments of $Z_0$,
\begin{align}
\notag \E[ \delta_n(Z)^2] \leq  & \quad \frac{n}{T_n} \sum_{i = 1}^n \int_{t_{i-1}}^{t_i} \int_{t_{i-1}}^{t_i} C |t_i - t|^{\alpha} |t_i - s|^{\alpha} ds dt  \\
\notag = & \quad \frac{C \Delta_n^{2\alpha + 2}}{T_{n}} \sum_{i=1}^{n} \int_{0}^{1}  \int_{0}^{1} (uv)^{\alpha}du dv\\
\notag \leq  & \quad C n\Delta_n^{2\alpha+1},
\end{align}
with  $u=\frac{t-t_{i-1}}{\Delta_n}$, $v=\frac{s-t_{j-1}}{\Delta_n}$. Thus,~\eqref{estimate-difference}  is established.

Now, using that $n \Delta_n^\eta \to 0$ for some $\eta > 1$,
\begin{align}
\E \left[ \left( \frac{\delta_{n}(Z)}{\sqrt{T_n}} \right)^2 \right] \leq C \Delta_n^{2\alpha} \leq  C n^{-2\alpha/\eta}\left(n\Delta_n^{\eta}\right)^{2\alpha/\eta} \leq C n^{-2\alpha/\eta}.
\end{align}
By the hypercontractivity property~\eqref{hypercontractivity} and Lemma~\ref{Borel-Cantelli}, we obtain
$ \frac{\delta_{n}(Z)}{\sqrt{T_n}} \to 0,$ and thus $\widehat{f}_{T_n}(Z) - \widetilde{f}_n(Z) \to 0$ almost surely. By Lemma~\ref{lem:ft_strongc} $\widehat{f}_{T_n}  - f_Z \to 0$ almost surely. Therefore, $\widetilde{f}_n(Z) - f_Z \to 0$ almost surely and  the discrete estimator is strongly consistent.

\end{proof}

We now turn to the proof of Theorem~\ref{CLT-tilde-STAT}.

\subsection{Proof of Theorem~\ref{CLT-tilde-STAT}.}
The discrete estimator $\widetilde{f}_n(Z)$ is unbiased, so $Var(\widetilde{f}_n(Z) - f_Z) = \E [( \widetilde{f}_n(Z) - f_Z)^2]$. Next, by the triangle inequality one has,
\begin{align}
\notag d_{TV} \left( \frac{\widetilde{f}_n(Z)  - f_Z}{\sqrt{ \E [( \widetilde{f}_n(Z) - f_Z)^2] }}, \mathcal{N}\right) \leq \quad &  d_{TV} \left( \frac{\widetilde{f}_n(Z)  - f_Z}{\sqrt{ \E [( \widetilde{f}_n(Z) - f_Z)^2] }}, \frac{\widehat{f}_{T_n}(Z)  - f_Z}{\sqrt{ \E [( \widetilde{f}_n(Z) - f_Z)^2] }} \right) \\
\label{eq:tv_triangle} &+  d_{TV} \left( \frac{\widehat{f}_n(Z)  - f_Z}{\sqrt{ \E [( \widetilde{f}_{T_n}(Z) - f_Z)^2] }}, \mathcal{N}\right) .
\end{align}
By Lemma~\ref{upper-d_TV-lemma},
\begin{align}
\notag  d_{TV} \left( \frac{\widehat{f}_n(Z)  - f_Z}{\sqrt{ \E [( \widetilde{f}_{T_n}(Z) - f_Z)^2] }}, \mathcal{N}\right) \leq d_{TV} \left( \frac{\widehat{f}_n(Z)  - f_Z}{\sqrt{ \E [( \widehat{f}_{T_n}(Z) - f_Z)^2] }}, \mathcal{N}\right) + 2 \left| 1 - \frac{Var(\widehat{f}_{T_n}(Z) - f_Z)}{Var(\widetilde{f}_n(Z) - f_Z)} \right|.
\end{align}
Then, by Theorem~\ref{CLT-hat-STAT} the first term on the right-hand side is further bounded by $\varphi(T_n)$ as defined in~\eqref{eq:varphi_bound}.

Thus, we are left to bound the first term in~\eqref{eq:tv_triangle}. By the definition of total variation distance one has
\begin{align}
\notag d_{TV} \left( \frac{\widetilde{f}_n(Z)  - f_Z}{\sqrt{ \E [( \widetilde{f}_n(Z) - f_Z)^2] }}, \frac{\widehat{f}_{T_n}(Z)  - f_Z}{\sqrt{ \E [( \widetilde{f}_n(Z) - f_Z)^2] }} \right) = d_{TV} (U_n(Z), V_{T_n}(Z) ),
\end{align}
with the two quantities given in~\eqref{chaos-V} and~\eqref{eq:def_U}. We recall the following technical result
\begin{lemma}[{\cite[Theorem 3.5]{kosov}}]\label{dTV}   There is a positive constant $C>0$ such that, for all multiple integrals $F$ and $G$ of order 2,
\begin{eqnarray*}
d_{TV}\left(F,G\right) \leq C \left( \frac{ \E [ (F- G)^2]}{\E[ F^2 ] } \right)^{1/4}.
\end{eqnarray*}
\end{lemma}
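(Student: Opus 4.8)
We briefly indicate the strategy one would follow to prove this estimate, which is~\cite[Theorem 3.5]{kosov} and which we otherwise use as a black box. Since both sides of the inequality are unchanged under the simultaneous scaling $(F,G)\mapsto(tF,tG)$ with $t>0$, the plan is first to reduce to the normalization $\E[F^2]=1$, so that the claim becomes $d_{TV}(F,G)\le C\,\norm{F-G}_{L^2}^{1/2}$; set $\varepsilon\coloneqq\norm{F-G}_{L^2}$. One may also assume $\varepsilon$ small (for $\varepsilon$ bounded below the bound is trivial once $C\ge1$, since $d_{TV}\le1$) and $F\neq0$, $G\neq0$, so that, being nonzero elements of the second Wiener chaos, $F$ and $G$ have absolutely continuous laws with densities $p_F$ and $p_G$.

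The core of the argument is a regularization. Let $\mathcal N\sim\mathcal N(0,1)$ be independent of $F$ and $G$, fix a scale $\delta\in(0,1)$, and put $F_\delta\coloneqq F+\delta\mathcal N$ and $G_\delta\coloneqq G+\delta\mathcal N$. By the triangle inequality,
\[ d_{TV}(F,G)\le d_{TV}(F,F_\delta)+d_{TV}(F_\delta,G_\delta)+d_{TV}(G_\delta,G). \]
The middle term is handled by Fourier analysis: $F_\delta$ and $G_\delta$ have the smooth densities $p_F*\phi_\delta$ and $p_G*\phi_\delta$, where $\phi_\delta$ is the $\mathcal N(0,\delta^2)$ density, so $d_{TV}(F_\delta,G_\delta)=\tfrac12\norm{p_F*\phi_\delta-p_G*\phi_\delta}_{L^1}$; splitting this $L^1$-norm into its restriction to an interval $[-R,R]$ (bounded via Cauchy--Schwarz and Plancherel, using $|\E[e^{i\xi F}]-\E[e^{i\xi G}]|\le|\xi|\,\E|F-G|\le|\xi|\varepsilon$ together with the Gaussian factor $e^{-\delta^2\xi^2/2}$) and a tail (bounded via hypercontractivity, which also gives $\E[G^2]\le2+2\varepsilon^2$, hence uniform moments) leads to an estimate of the form $d_{TV}(F_\delta,G_\delta)\le C\,\delta^{-a}\varepsilon$ for a suitable $a>0$.

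The two outer terms are controlled by anti-concentration. Because $F$ is a degree-two polynomial in a Gaussian family with $\E[F^2]=1$, the Carbery--Wright inequality yields $\sup_{x\in\R}\PP[F\in[x,x+t]]\le C\sqrt t$ for all $t>0$, with $C$ absolute (equivalently, $p_F\in L^p(\R)$ for every $p\in[1,2)$ with norm depending only on $p$, uniformly over $F$); the same holds for $G$. Since $d_{TV}(F,F_\delta)=\tfrac12\norm{p_F-p_F*\phi_\delta}_{L^1}\le\tfrac12\,\E_{\mathcal N}\big[\norm{p_F-p_F(\cdot-\delta\mathcal N)}_{L^1}\big]$, this estimate---combined with the structural regularity of second-chaos densities (smooth away from the endpoints of the support, where at worst a square-root singularity occurs)---gives $d_{TV}(F,F_\delta)\le C\sqrt\delta$, and similarly $d_{TV}(G,G_\delta)\le C\sqrt\delta$. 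Inserting these three bounds and optimizing over $\delta$, i.e.\ balancing $\sqrt\delta$ against $\delta^{-a}\varepsilon$, yields $d_{TV}(F,G)\le C\varepsilon^{\theta}$ for an explicit exponent $\theta>0$.

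The hardest part is to make these two estimates sharp enough. The naive forms---namely $a=1$ for $d_{TV}(F_\delta,G_\delta)$ and exponent $1/2$ in $\delta$ for $d_{TV}(F,F_\delta)$---only give $\theta=1/3$, which falls short of the claim. Reaching $\theta=1/2$, equivalently the exponent $1/4$ on $\E[(F-G)^2]/\E[F^2]$ in the statement, requires using the full structure of the laws of second-chaos variables: in particular uniform $L^p$-density bounds with $p\uparrow2$ and controlled growth of the constants, uniform over the spectrum of the Hilbert--Schmidt operator attached to $F$, together with a careful choice of $\delta$ as a power of $\varepsilon$. This delicate analysis is carried out in~\cite{kosov}; here we apply the result directly.
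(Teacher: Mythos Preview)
The paper does not prove this lemma at all: it is simply quoted as a black-box result from \cite[Theorem 3.5]{kosov}, introduced with ``We recall the following technical result'' and used without further justification. Your proposal is therefore consistent with the paper's treatment---you explicitly say you ``otherwise use [it] as a black box''---and in fact goes further by sketching the regularization/anti-concentration strategy behind Kosov's argument. Since there is no proof in the paper to compare against, there is nothing to add; your sketch is a reasonable outline of the ideas (smoothing by an independent Gaussian, Carbery--Wright for small-ball probabilities, optimization over the smoothing scale), and you are honest that the sharp exponent $1/4$ requires the finer analysis carried out in \cite{kosov}.
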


Therefore, there is a positive constant $C>0$ such that, for every $n \geq 1$
\begin{align}
\label{d_TV-continuous-discrete} d_{TV}\left(U_{n}(Z),V_{T_n}(Z)\right) \leq C \left( \frac{ \E[ \delta_n(Z)^2 ] }{\E [ V_{T_n}(Z)^2 ] } \right)^{1/4} \leq C (n \Delta n^{2 \alpha + 1})^{1/4},
\end{align}
where we have used~\eqref{eq:VT2_finite} and~\eqref{estimate-difference}. The corresponding bound for the Wasserstein distance follows from Remark~\ref{rem:wass_tv}. Thus, the proof of Theorem~\ref{CLT-tilde-STAT} is completed.

\begin{remark} Let us present an example where the bounds on total variation in Theorem~\ref{CLT-tilde-STAT} decrease to $0$ and thus a central limit theorem holds. Let $\Delta_n=1/n^{\lambda}$ with $\frac{1}{2\alpha+1}<\lambda<1$. Then, $n\Delta_n\rightarrow\infty$ and $n\Delta_n^{2\alpha+1}\rightarrow0$. Moreover,
\begin{align}
\notag 2 \left| 1 - \frac{Var(\widehat{f}_{T_n}(Z) - f_Z)}{Var(\widetilde{f}_n(Z) - f_Z)} \right| = 2 \left| \frac{\E[ U_n(Z)^2] - \E[ V_{T_n}(Z)^2] }{\E[ U_n(Z)^2]}\right| \leq 2 \frac{\E[\delta_n(Z)^2]}{\E[U_n(Z)^2]}.
\end{align}
Now, if $\E[\delta_n(Z)^2] \to 0$ and $\E[V_{T_n}(Z)] \uparrow \sigma_Z^2$, then $\E[U_n(Z)^2 ] \uparrow \sigma_Z^2$. Thus,
\begin{align}
\notag d_{TV} \left( \frac{\widetilde{f}_n(Z)  - f_Z}{\sqrt{ \E [( \widetilde{f}_n(Z) - f_Z)^2] }}, \mathcal{N}\right) \leq  \varphi_{T_n}(Z) + C n \Delta_n^{2 \alpha + 1}.
\end{align}
Further bounds on $\varphi_{T_n}(Z)$ are presented, for instance, in Corollary~\ref{cor:explicit_bounds} when $|\rho(t)| = \mathcal{O}(t^{2 \beta -2})$ for some $\beta \in (0, 3/4)$.
\end{remark}

Finally, note that our approach for the proof of Theorem~\ref{CLT-tilde-STAT} can be applied directly to Corollary~\ref{cor:sigmaz} and then the following holds:
\begin{corollary} Let $\sigma_Z^2 = 4 \int_0^\infty \rho(u)^2du$. Under the same assumptions as in Theorem~\ref{CLT-tilde-STAT},   for all $n \geq 1$,
\begin{align}
\notag d_{TV}\left(\frac{\sqrt{T}}{\sigma_Z} (\widetilde{f}_n(Z) -
f_Z), \mathcal{N} \right) \leq \varphi_{T_n}(Z) + 2\left| 1-
\frac{\sigma_Z^2}{E(V_{T_n}(Z)^2)}\right| + C (n \Delta n^{2 \alpha
+ 1})^{1/4},
\end{align}
where $\varphi_{T_n}(Z)$ is as in Theorem~\ref{CLT-hat-STAT} and $C > 0$ is an absolute constant depending on $\E[Z_0^4]$. The same result holds for the Wasserstein distance.

\end{corollary}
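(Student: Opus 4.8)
The plan is to run the argument from the proof of Theorem~\ref{CLT-tilde-STAT} almost verbatim, but with the appeal to Theorem~\ref{CLT-hat-STAT} replaced by an appeal to Corollary~\ref{cor:sigmaz}. Since $\widetilde{f}_n(Z)$ is unbiased and $U_n(Z)=\sqrt{T_n}(\widetilde{f}_n(Z)-f_Z)$, $V_{T_n}(Z)=\sqrt{T_n}(\widehat{f}_{T_n}(Z)-f_Z)$ by~\eqref{chaos-V} and~\eqref{eq:def_U}, one has $\frac{\sqrt{T_n}}{\sigma_Z}(\widetilde{f}_n(Z)-f_Z)=U_n(Z)/\sigma_Z$ and $\frac{\sqrt{T_n}}{\sigma_Z}(\widehat{f}_{T_n}(Z)-f_Z)=V_{T_n}(Z)/\sigma_Z$. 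By the triangle inequality for $d_{TV}$,
\[
d_{TV}\!\left(\frac{U_n(Z)}{\sigma_Z},\mathcal{N}\right)\le d_{TV}\!\left(\frac{U_n(Z)}{\sigma_Z},\frac{V_{T_n}(Z)}{\sigma_Z}\right)+d_{TV}\!\left(\frac{V_{T_n}(Z)}{\sigma_Z},\mathcal{N}\right).
\]

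First I would bound the second term by Corollary~\ref{cor:sigmaz} applied with $T=T_n$, which gives $d_{TV}(V_{T_n}(Z)/\sigma_Z,\mathcal{N})\le\varphi_{T_n}(Z)+2|1-\sigma_Z^2/\E[V_{T_n}(Z)^2]|$. For the first term, $d_{TV}$ is invariant under multiplication by the nonzero constant $1/\sigma_Z$, so it equals $d_{TV}(U_n(Z),V_{T_n}(Z))$; since $U_n(Z)$ and $V_{T_n}(Z)$ are both multiple Wiener--It\^o integrals of order $2$, Lemma~\ref{dTV} yields
\[
d_{TV}(U_n(Z),V_{T_n}(Z))\le C\left(\frac{\E[\delta_n(Z)^2]}{\E[V_{T_n}(Z)^2]}\right)^{1/4},
\]
with $\delta_n(Z)=U_n(Z)-V_{T_n}(Z)$, which is precisely the bound already established in~\eqref{d_TV-continuous-discrete}. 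Using $\E[\delta_n(Z)^2]\le C_\alpha\,n\Delta_n^{2\alpha+1}$ (this is~\eqref{estimate-difference} multiplied by $T_n$) together with $\E[V_{T_n}(Z)^2]\uparrow\sigma_Z^2$ bounds this term by $C(n\Delta_n^{2\alpha+1})^{1/4}$ with a constant depending only on $\E[Z_0^4]$ (through $C_\alpha$ and $\sigma_Z$). Summing the two estimates gives the announced inequality, and the Wasserstein version follows from Remark~\ref{rem:wass_tv}.

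Since both ingredients --- Corollary~\ref{cor:sigmaz} and the estimate~\eqref{d_TV-continuous-discrete} --- are already available, there is no real obstacle here; the only point to keep track of is the uniform-in-$n$ lower bound for $\E[V_{T_n}(Z)^2]$ appearing in the two denominators. By~\eqref{eq:vt_conv} the map $T\mapsto\E[V_T(Z)^2]$ is nondecreasing with limit $\sigma_Z^2$, so for all large $n$ one has $\E[V_{T_n}(Z)^2]\ge\sigma_Z^2/2$ and the constants are harmless; for the remaining finitely many indices the right-hand side already dominates, since the term $2|1-\sigma_Z^2/\E[V_{T_n}(Z)^2]|$ blows up exactly as $\E[V_{T_n}(Z)^2]\to 0$, so the stated bound holds trivially there. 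This is the same bookkeeping underlying the proof of Theorem~\ref{CLT-tilde-STAT}, and nothing beyond it is required.
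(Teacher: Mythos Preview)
Your proposal is correct and follows essentially the same approach as the paper, which merely remarks that the proof of Theorem~\ref{CLT-tilde-STAT} can be applied directly with Corollary~\ref{cor:sigmaz} in place of Theorem~\ref{CLT-hat-STAT}. Your write-up in fact supplies more detail than the paper does, including the bookkeeping about the uniform lower bound on $\E[V_{T_n}(Z)^2]$.
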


\section{Parameter estimation for non-stationary Gaussian processes}\label{sec:asym}

In practical applications, the data rarely comes from a stationary process. This is reflected on the modeling side by for instance studying stochastic systems started at a point mass rather than the system's stationary distribution. Some more specific examples are included in Section~\ref{sec:OU}.

The present section is devoted to the general treatment of a process that is asymptotically stationary. In particular, let $Z$ be a centered stationary Gaussian process (as in Section~\ref{sec:stationary}), and let $Y$ be a stochastic process satisfying the following: there exists a constant $\gamma >1$ such that for every $p\geq 1$ and for all $T>0$,
\begin{align}
\left\Vert Y_T\right\Vert _{L^{p}}=\mathcal{O}\left( T^{-\gamma}\right) .  \label{hypothesis-on-Z+Y}
\end{align}
Then we consider second moment estimators for the process $X \coloneqq Z + Y$. Our goal is to estimate the limiting variance $f_X \coloneqq \lim_{T \to \infty} \E[ X_T^2]$. The limit exists and in fact $f_X = f_Z$. Indeed,
\begin{align}
\notag \E[ X_T^2 ] = \E[ (Z_T + Y_T)^2 ] = \E[ Z_T^2 ] + 2 \E [ Z_T Y_T] + \E[ Y_T^2] \to f_Z,
\end{align}
 since $ \E[ Z_T^2 ]  = f_Z$, $\E[ Y_T^2] = \mathcal{O}(T^{- 2 \gamma})$ and by the Cauchy-Schwarz inequality  $|\E [ Z_T Y_T] |  = \mathcal{O}(T^{-\gamma})$.
As in Section~\ref{sec:stationary} we consider the second moment estimators $\widehat{f}_T(X)$ and $\widetilde{f}_n(X)$, based on continuous-time and discrete-time observations of $X$:
\begin{align}
\label{estim-cont-NONSTAT} \widehat{f}_T(X) \coloneqq & \quad \frac{1}{T} \int_{0}^{T} X_{t}^{2}dt, \quad T>0, \\
\label{estim-disc-NONSTAT}\widetilde{f}_n(X) \coloneqq & \quad   \frac{1}{n} \sum_{i =1}^{n} X_{t_{i}}^{2}, \quad n \geq 1,
\end{align}
where $t_{i} = i \Delta_{n}$, $  i=0,\ldots, n $, $\Delta_{n}\rightarrow0$ and  $ T_{n} \coloneqq n \Delta_{n}\rightarrow \infty$.
We establish some basic properties of the two estimators.

\begin{proposition} The estimators $\widehat{f}_T(X)$ and $\widetilde{f}_n(X)$ are asymptotically unbiased. Moreover, there is a constant $C > 0$, such that for all $T> 0$,
\begin{align}
\label{eq:nonstat_unbiased} |\E [ \widehat{f}_T(X) - f_X]| \leq
CT^{- \gamma}, \mbox{ and } |\E [ \widetilde{f}_T(X) - f_X]| \leq
CT^{- \gamma}.
\end{align}
\end{proposition}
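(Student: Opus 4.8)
The plan is to verify the two bounds in~\eqref{eq:nonstat_unbiased} by direct computation, exploiting the decomposition $X = Z + Y$ and the fact that $Z$ is stationary with $\E[Z_t^2] = f_Z = f_X$.

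For the continuous-time estimator, I would first write
\[
\E[\widehat{f}_T(X)] - f_X = \frac{1}{T}\int_0^T \E[X_t^2]\,dt - f_Z = \frac{1}{T}\int_0^T \left( \E[Z_t^2] + 2\E[Z_tY_t] + \E[Y_t^2] - f_Z \right)dt.
\]
Since $\E[Z_t^2] = f_Z$ for every $t$ by stationarity, the first and last terms inside the integrand cancel, leaving
\[
\E[\widehat{f}_T(X)] - f_X = \frac{1}{T}\int_0^T \left( 2\E[Z_tY_t] + \E[Y_t^2] \right)dt.
\]
Now I bound the integrand pointwise: by Cauchy--Schwarz, $|\E[Z_tY_t]| \leq \norm{Z_t}_{L^2}\norm{Y_t}_{L^2} = \sqrt{f_Z}\cdot \mathcal{O}(t^{-\gamma})$ using~\eqref{hypothesis-on-Z+Y} with $p=2$, and $\E[Y_t^2] = \norm{Y_t}_{L^2}^2 = \mathcal{O}(t^{-2\gamma})$. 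Hence the integrand is $\mathcal{O}(t^{-\gamma})$ (the slower-decaying term dominates since $\gamma > 1 > 0$), and since $\gamma > 1$, $\int_0^T t^{-\gamma}\,dt$ near $0$ needs a small remark: one splits $\int_0^T = \int_0^1 + \int_1^T$, the first integral being a finite constant (as $\E[Z_tY_t]$ and $\E[Y_t^2]$ are bounded on $[0,1]$, again by~\eqref{hypothesis-on-Z+Y} applied on the compact interval) and the second bounded by a constant times $\int_1^T t^{-\gamma}dt \leq C$. Dividing by $T$ gives $|\E[\widehat{f}_T(X)] - f_X| \leq C/T \leq CT^{-\gamma}$ when $\gamma \leq 1$ — but here $\gamma > 1$, so in fact one gets the stronger $C/T$; to get exactly the stated $CT^{-\gamma}$ rate one simply notes $1/T \leq T^{-\gamma}$ fails for $\gamma>1$, so the cleaner statement is that the bound is $C T^{-1}$, which is what should be recorded, or one keeps track more carefully. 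Actually the right reading is that~\eqref{eq:nonstat_unbiased} holds as stated because $\gamma>1$ forces the $\int_1^T t^{-\gamma}\,dt$ contribution to be $\mathcal{O}(1)$ and dividing by $T$ yields $\mathcal{O}(T^{-1})$, and since we only claim $\mathcal{O}(T^{-\gamma})$ is an upper bound up to constants on $[T_0,\infty)$ one should instead bound the average of $t^{-\gamma}$ over $[0,T]$; I would present it as $|\E[\widehat{f}_T(X)] - f_X| \leq \frac{C}{T} \leq C T^{-\gamma}$ is \emph{incorrect} for $\gamma>1$, so I would simply state and prove the bound $\leq C/T$ which dominates any desired power and is what the later sections use.

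For the discrete estimator the argument is identical with the integral replaced by a sum:
\[
\E[\widetilde{f}_n(X)] - f_X = \frac{1}{n}\sum_{i=1}^n \left( 2\E[Z_{t_i}Y_{t_i}] + \E[Y_{t_i}^2] \right),
\]
and each summand is $\mathcal{O}(t_i^{-\gamma}) = \mathcal{O}((i\Delta_n)^{-\gamma})$ by the same Cauchy--Schwarz estimate. Summing, $\frac{1}{n}\sum_{i=1}^n (i\Delta_n)^{-\gamma} = \frac{\Delta_n^{-\gamma}}{n}\sum_{i=1}^n i^{-\gamma}$; since $\gamma>1$ the series $\sum i^{-\gamma}$ converges, so this is $\mathcal{O}(\Delta_n^{-\gamma}/n) = \mathcal{O}(1/(n\Delta_n^\gamma))$. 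One then rewrites $n\Delta_n^\gamma = (n\Delta_n)\Delta_n^{\gamma-1} = T_n \Delta_n^{\gamma - 1}$, and since $\Delta_n \to 0$ with $\gamma>1$ this is not obviously comparable to $T_n^{-\gamma}$ either; the clean statement is again a bound in terms of $T_n$ and $\Delta_n$. I expect that the intended reading of~\eqref{eq:nonstat_unbiased} is simply that both biases tend to $0$ at the stated rate under the additional running assumptions ($\Delta_n \to 0$, $T_n \to \infty$), and the main obstacle is purely bookkeeping: reconciling the $1/T$ (resp. $1/(n\Delta_n^\gamma)$) that the computation actually produces with the $T^{-\gamma}$ (resp. $T_n^{-\gamma}$) appearing in the statement, which should be handled by noting the latter is the weaker of the two claims when restricted to the regime of interest, or by correcting the exponent. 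No deep tools are needed — only stationarity of $Z$, Cauchy--Schwarz, and hypothesis~\eqref{hypothesis-on-Z+Y}.
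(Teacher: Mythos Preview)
Your approach is identical to the paper's: expand $\E[X_t^2] = f_Z + 2\E[Z_tY_t] + \E[Y_t^2]$, cancel the $f_Z$ by stationarity, bound the remaining ``error'' term by Cauchy--Schwarz and~\eqref{hypothesis-on-Z+Y} as $\mathcal{O}(t^{-\gamma})$, then average. The paper's proof is literally three lines and does exactly this.

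Your hesitation about the exponent is well founded, and in fact you have caught a sloppiness in the paper itself. The paper writes
\[
|\E[\widehat{f}_T(X)-f_X]| \;\le\; \frac{1}{T}\int_0^T |err(t)|\,dt \;\le\; CT^{-\gamma},
\]
but as you observe, for $\gamma>1$ the integral $\int_0^T |err(t)|\,dt$ is bounded (split at $t=1$; the tail converges), so the honest conclusion is $\mathcal{O}(T^{-1})$, which is \emph{weaker} than the claimed $\mathcal{O}(T^{-\gamma})$. The paper does not justify the stronger rate and it does not hold in general. Your instinct to record $C/T$ is the correct one. The same remark applies to the discrete case, where your computation giving $\mathcal{O}(\Delta_n^{-\gamma}/n)=\mathcal{O}\big((T_n\Delta_n^{\gamma-1})^{-1}\big)$ is the right answer; the paper's ``similar computation'' is equally unjustified at the $T_n^{-\gamma}$ rate.

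This does not damage anything downstream: in the proof of Theorem~\ref{CLT-hat-NONSTAT} the bias term only needs to be $o(T^{-1/2})$, and $\mathcal{O}(T^{-1})$ is ample; in the Ornstein--Uhlenbeck applications $Y_t$ decays exponentially, so $\gamma$ may be taken arbitrarily large and the distinction evaporates. So: your proof is correct, matches the paper's method, and your bookkeeping is more careful than the paper's.
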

\begin{proof}
We established that for all $T > 0$, $\E[ X_T^2] = f_X + err(t)$, where the  error term satisfies $|err(t)| = \mathcal{O}(t^{- \gamma})$. Therefore, for the continuous estimator,
\begin{align}
\notag |\E[ \widehat{f}_T(X) - f_X ] | \leq \frac{1}{T} \int_0^T |err(t)| dt \leq C T^{ - \gamma} \to 0 \quad \mbox{as } T \to \infty,
\end{align}
and thus $\widehat{f}_T(X)$ is asymptotically unbiased. A similar computation yields the same for $\widetilde{f}_n(Z)$.

\end{proof}

We next show strong consistency.
\begin{proposition}\label{consistency for Z+Y}If $\int_{\R}\rho^2(r)dr<\infty$ and the condition (\ref{hypothesis-on-Z+Y})   holds, then
\begin{align}
\label{eq:bound_diff_asympt} \norm{\widehat{f}_T(X) - f_X}_{L^2} \leq CT^{-1/2}.
\end{align}
Moreover,
\begin{align}
\widehat{f}_{T}(X)\longrightarrow f_X \quad \mbox{almost surely as } T\to \infty.\label{CV-consistency-hat-X}
\end{align}
In addition, if  $Z$ satisfies the helix property (\ref{helix-property}), and $n\Delta_n^{2\alpha+1}\to 0$, as $n\rightarrow\infty$, then
\begin{align}
\widetilde{f}_{n}(X)\longrightarrow f_X \quad \mbox{almost surely as }  T\to \infty. \label{CV-consistency-discrete-X}
\end{align}
\end{proposition}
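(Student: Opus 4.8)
The plan is to exploit the split $X_t^2 = Z_t^2 + 2 Z_t Y_t + Y_t^2$, which, since $f_X = f_Z$, writes each estimator as the corresponding stationary estimator plus a ``$Y$-remainder'':
\begin{align}
\notag \widehat{f}_T(X) - f_X = \left(\widehat{f}_T(Z) - f_Z\right) + \frac{2}{T}\int_0^T Z_t Y_t\,dt + \frac{1}{T}\int_0^T Y_t^2\,dt,
\end{align}
and similarly for $\widetilde{f}_n(X)$ with $\tfrac1n\sum_{i=1}^n$ replacing $\tfrac1T\int_0^T$. By Cauchy--Schwarz the cross term is bounded, in both cases, by $\widehat{f}_T(Z)^{1/2}$ (resp.\ $\widetilde{f}_n(Z)^{1/2}$) times the square root of the average of $Y_t^2$; since $\widehat f_T(Z)$ is consistent by Lemma~\ref{lem:ft_strongc} and $\widetilde f_n(Z)$ is consistent by the corresponding result of Section~\ref{sec:stationary} (applicable under Assumption~\ref{assume:helix} with $\eta = 2\alpha+1 > 1$), the whole problem reduces to controlling averages of $Y_t^2$.

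For \eqref{eq:bound_diff_asympt} I would apply the triangle inequality in $L^2$: the first summand is at most $\sigma_Z/\sqrt T$ by \eqref{eq:VT2_finite}, while Minkowski's integral inequality together with Cauchy--Schwarz give $\norm{\tfrac1T\int_0^T Z_tY_t\,dt}_{L^2}\le\tfrac1T\int_0^T\norm{Z_t}_{L^4}\norm{Y_t}_{L^4}\,dt$ and $\norm{\tfrac1T\int_0^T Y_t^2\,dt}_{L^2}\le\tfrac1T\int_0^T\norm{Y_t}_{L^4}^2\,dt$. By stationarity $\norm{Z_t}_{L^4}$ is constant, and $\norm{Y_t}_{L^4}=\mathcal{O}(t^{-\gamma})$ with $\gamma>1$ by \eqref{hypothesis-on-Z+Y} (finite near $t=0$, hence integrable on $[0,1]$), so both $\int_0^T\norm{Y_t}_{L^4}\,dt$ and $\int_0^T\norm{Y_t}_{L^4}^2\,dt$ stay bounded in $T$; thus the two remainder terms are $\mathcal{O}(1/T)$ and $\norm{\widehat f_T(X)-f_X}_{L^2}\le C/\sqrt T$.

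For the a.s.\ statement \eqref{CV-consistency-hat-X}, by the reduction above it is enough to show $\tfrac1T\int_0^T Y_t^2\,dt\to0$ a.s. Repeating the $L^2$ estimate with a general exponent $p$ (invoking hypercontractivity \eqref{hypercontractivity} to control the higher moments of $Z_t$, and the moment decay of $Y_t$) gives $\norm{\tfrac1n\int_0^n Y_t^2\,dt}_{L^p}\le C_p/n$ for all $p\ge1$, so Lemma~\ref{Borel-Cantelli} gives convergence to $0$ along integer times, and Lemma~\ref{continuous-Borel-Cantelli} applied to $u_t=Y_t^2$ (whose moments $\sup_{t\ge0}\E[|Y_t^2|^p]=\sup_{t\ge0}\norm{Y_t}_{L^{2p}}^{2p}$ are finite) upgrades this to $T\in\R_+$. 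Together with $\widehat f_T(Z)\to f_Z$ a.s.\ this yields \eqref{CV-consistency-hat-X}.

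For the discrete statement \eqref{CV-consistency-discrete-X}, the same reduction (using $\widetilde f_n(Z)\to f_Z$ a.s.) leaves us to prove $\tfrac1n\sum_{i=1}^n Y_{t_i}^2\to0$ a.s. Because $Y$ is itself a centered Gaussian process, $\tfrac1n\sum_i Y_{t_i}^2$ minus its mean lies in the second Wiener chaos, so its variance is $\tfrac2{n^2}\sum_{i,j}\E[Y_{t_i}Y_{t_j}]^2\le\tfrac2{n^2}\big(\sum_i\norm{Y_{t_i}}_{L^2}^2\big)^2$; splitting the sum at $i\sim1/\Delta_n$ and using $\norm{Y_{t_i}}_{L^2}\le C\min(1,(i\Delta_n)^{-\gamma})$ with $\gamma>1$ gives $\sum_i\norm{Y_{t_i}}_{L^2}^2=\mathcal{O}(1/\Delta_n)$, hence the variance — and by hypercontractivity in the chaos also the $p$-th moment — is $\mathcal{O}(1/T_n^2)$, while the mean is $\mathcal{O}(1/T_n)$; so $\norm{\tfrac1n\sum_i Y_{t_i}^2}_{L^p}\le C_p/T_n$. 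I would then conclude by Lemma~\ref{Borel-Cantelli} (or, equivalently, by comparing $\tfrac1n\sum_i Y_{t_i}^2$ with $\tfrac1{T_n}\int_0^{T_n}Y_t^2\,dt$, shown above to vanish a.s., the quadrature error being $\mathcal{O}_{L^p}(1/T_n)$ by the same moment bounds on $Y$). This last point — turning an $\mathcal{O}(1/T_n)$ bound into almost sure decay at a rate in $n$ suitable for Lemma~\ref{Borel-Cantelli}, which is where the interplay $n\Delta_n^{2\alpha+1}\to0$, $n\Delta_n\to\infty$ of the mesh parameters enters — is the only genuinely delicate step; everything else mirrors the arguments of Section~\ref{sec:stationary}.
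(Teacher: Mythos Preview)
Your decomposition $X_t^2=Z_t^2+2Z_tY_t+Y_t^2$ and the subsequent use of Minkowski, Cauchy--Schwarz, hypercontractivity, and Lemmas~\ref{Borel-Cantelli}--\ref{continuous-Borel-Cantelli} mirror the paper's own argument; your treatment of \eqref{eq:bound_diff_asympt} and \eqref{CV-consistency-hat-X} is in fact more explicit than the paper's (whose display~\eqref{eq:estim_y} even writes $\tfrac1T\int_0^T Y_t\,dt$ where $X_t^2-Z_t^2$ is meant).

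There is, however, a genuine gap in your handling of the discrete $Y$-remainder for \eqref{CV-consistency-discrete-X}. You correctly obtain $\bigl\lVert\tfrac1n\sum_i Y_{t_i}^2\bigr\rVert_{L^p}=\mathcal{O}(1/T_n)$ and then assert that the mesh conditions $n\Delta_n^{2\alpha+1}\to0$, $n\Delta_n\to\infty$ convert this into a power of $n$ suitable for Lemma~\ref{Borel-Cantelli}. But $n\Delta_n^{2\alpha+1}\to0$ only yields $\Delta_n\le Cn^{-1/(2\alpha+1)}$, hence an \emph{upper} bound $T_n\le Cn^{2\alpha/(2\alpha+1)}$; neither hypothesis furnishes a polynomial \emph{lower} bound on $T_n$, so $1/T_n$ need not be $\mathcal{O}(n^{-\gamma})$ for any $\gamma>0$ (the choice $T_n=\log n$, $\Delta_n=\log n/n$ satisfies all assumptions yet gives $1/T_n=1/\log n$). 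The condition $n\Delta_n^{2\alpha+1}\to0$ enters only for the $Z$-part via \eqref{estimate-difference}, where the bound is $\Delta_n^{2\alpha}=\mathcal{O}(n^{-2\alpha/(2\alpha+1)})$; it has nothing to do with $Y$. Your alternative route through the quadrature error $\tfrac1n\sum_iY_{t_i}^2-\tfrac1{T_n}\int_0^{T_n}Y_t^2\,dt$ faces the same obstruction, and in any case requires regularity of $Y$ that is not among the hypotheses.

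To be fair, the paper's own proof is equally terse here (``similar, main ingredient is~\eqref{estimate-difference}''), and a rigorous bound on $\widetilde f_n(X)-\widehat f_{T_n}(X)$ also needs some increment control on $Y$. In the applications of Section~\ref{sec:OU} one has $Y_t=-e^{-\theta t}Z_0^\theta$, for which $\tfrac1n\sum_i Y_{t_i}^2=(Z_0^\theta)^2\cdot\tfrac1n\sum_i e^{-2\theta t_i}\le C(Z_0^\theta)^2/T_n\to0$ almost surely, so the conclusion holds there; but in the stated generality neither your argument nor the paper's is complete.
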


\begin{proof}
The first step, as before, is to establish a bound on $\norm{\widehat{f}_T(X) - f_X}_{L^2}$. We have that
\begin{align}
\notag \norm{\widehat{f}_T(X) - f_X}_{L^2} \leq & \quad \norm{\widehat{f}_T(X) - \widehat{f}_T(Z) }_{L^2} + \norm{ \widehat{f}_T(Z) - f_X}_{L^2}.
\end{align}
Note, that by the triangle inequality and the Cauchy-Schwarz inequality,
\begin{align}
\label{eq:estim_y} \norm{\widehat{f}_T(X) - \widehat{f}_T(Z) }_{L^2} \leq \sqrt{\E \left| \frac{1}{T} \int_0^T Y_t dt \right|^2 } \leq  \frac{1}{T} \left(\int_{[0, T]^2} \left( \E [ Y_t^2] \E [ Y_s^2] \right)^{1/2} ds dt \right)^{1/2}.
\end{align}
Therefore, using $\norm{Y_t}_{L^p} = \mathcal{O}(t^{- \gamma})$,
\begin{align}
\notag   \norm{\widehat{f}_T(X) - f_X}_{L^2} \leq C(T^{- \gamma} + T^{-1/2}) \leq CT^{-1/2},
\end{align}
where we have applied~\eqref{eq:VT2_finite}.

By hypercontractivity~\eqref{hypercontractivity} we can extend the bound to $L^p$ norms for $p \geq 1$. Then applying Lemma~\ref{Borel-Cantelli} and Lemma~\ref{continuous-Borel-Cantelli} consecutively establishes strong consistency for $\widehat{f}_T(X)$.

The proof for $\widetilde{f}_n(X)$ is similar and a main ingredient is the bound from~\eqref{estimate-difference}.
\end{proof}

We now turn to the alternative formulations for our main results Theorem~\ref{CLT-hat-STAT} and Theorem~\ref{CLT-tilde-STAT} when the stochastic process is asymptotically stationary. Recall that $\rho(t) = \rho(-t) \coloneqq \E[Z_0Z_t ]$ for $t \geq 0$.

\begin{theorem}\label{CLT-hat-NONSTAT}  Assume that $ \int_\R \rho^2(r) dr  < \infty$  and that the condition (\ref{hypothesis-on-Z+Y})   holds. Let   $\mathcal{N} \sim\mathcal{N}(0,1)$ be the standard normal random variable. Then, there exists a constant $C>0$ such that, for all $T>0$,
\begin{align}
d_{TV}\left(\frac{\widehat{f}_T(X) - f_X - \E[\widehat{f}_T(X) - f_X] }{\sqrt{ Var(\widehat{f}_T(X) - f_X)} }, \mathcal{N}\right) \leq \varphi_T(Z) + C T^{\frac{1-\gamma}{4}},
\end{align}
where $\varphi_T(Z)$ is defined in~\eqref{eq:varphi_bound}. The same result holds for the Wasserstein distance.
\end{theorem}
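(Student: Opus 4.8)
The plan is to deduce the statement from Theorem~\ref{CLT-hat-STAT} by treating $X=Z+Y$ as an $L^2$-small perturbation of the stationary process $Z$. I would set
\[
F_T\coloneqq\frac{\widehat f_T(X)-f_X-\E[\widehat f_T(X)-f_X]}{\sqrt{Var(\widehat f_T(X)-f_X)}}=\frac{\widehat f_T(X)-\E[\widehat f_T(X)]}{\sqrt{Var(\widehat f_T(X))}},
\]
so that $\E[F_T]=0$ and $\E[F_T^2]=1$, and let $\overline{V}_T\coloneqq V_T(Z)/\sqrt{\E[V_T(Z)^2]}$ be the standardized version of $\widehat f_T(Z)-f_Z$ from~\eqref{chaos-V}. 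Theorem~\ref{CLT-hat-STAT} already gives $d_{TV}(\overline{V}_T,\mathcal N)\leq\varphi_T(Z)$, so by the triangle inequality the task reduces to proving $d_{TV}(F_T,\overline{V}_T)\leq CT^{(1-\gamma)/4}$.

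Next I would realize $X$ on the same isonormal Gaussian space as $Z$, writing $X_t=I_1(h_t)$ with $h_t=\stepid{t}+g_t$ and $\norm{g_t}_{\HC}=\norm{Y_t}_{L^2}=\mathcal O(t^{-\gamma})$. Then $\widehat f_T(X)-\E[\widehat f_T(X)]=\frac1T\int_0^TI_2(h_t^{\otimes2})\,dt$ (since $X_t^2-\E[X_t^2]=I_2(h_t^{\otimes2})$, cf.~\eqref{eq:var_rep}) and $\widehat f_T(Z)-f_Z=\frac1T\int_0^TI_2(\stepid{t}^{\otimes2})\,dt$ both lie in the second Wiener chaos, hence so do $F_T$, $\overline{V}_T$ and their difference. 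Lemma~\ref{dTV} then applies and, since $\E[F_T^2]=1$, yields $d_{TV}(F_T,\overline{V}_T)\leq C\big(\E[(F_T-\overline{V}_T)^2]\big)^{1/4}$; thus it remains only to show $\E[(F_T-\overline{V}_T)^2]=\mathcal O(T^{1-\gamma})$.

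For this I would write $V_X\coloneqq Var(\widehat f_T(X)-f_X)$, $V_Z\coloneqq Var(\widehat f_T(Z)-f_Z)=\E[V_T(Z)^2]/T$, decompose $\widehat f_T(X)-\E[\widehat f_T(X)]=(\widehat f_T(Z)-f_Z)+R_T$ with centered remainder $R_T\coloneqq(\widehat f_T(X)-\widehat f_T(Z))-\E[\widehat f_T(X)-\widehat f_T(Z)]$, and use
\[
F_T-\overline{V}_T=(\widehat f_T(Z)-f_Z)\Big(\tfrac1{\sqrt{V_X}}-\tfrac1{\sqrt{V_Z}}\Big)+\frac{R_T}{\sqrt{V_X}},
\]
so that $\E[(F_T-\overline{V}_T)^2]\leq 2\big((\sqrt{V_Z}-\sqrt{V_X})^2+\norm{R_T}_{L^2}^2\big)/V_X$. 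The three ingredients to feed in are: $\norm{R_T}_{L^2}\leq2\norm{\widehat f_T(X)-\widehat f_T(Z)}_{L^2}=\mathcal O(T^{-\gamma})$, which is exactly the estimate from the proof of Proposition~\ref{consistency for Z+Y} (see~\eqref{eq:estim_y}), obtained from $X_t^2-Z_t^2=2Z_tY_t+Y_t^2$, Cauchy--Schwarz, the hypercontractivity~\eqref{hypercontractivity} of $Z$, and $\norm{Y_t}_{L^p}=\mathcal O(t^{-\gamma})$; the bound $V_Z\asymp T^{-1}$ for large $T$, valid because $\E[V_T(Z)^2]\uparrow\sigma_Z^2>0$ by Lemma~\ref{lem:ft_strongc}; and $|V_X-V_Z|\leq2\sqrt{V_Z}\,\norm{R_T}_{L^2}+\norm{R_T}_{L^2}^2=\mathcal O(T^{-1/2-\gamma})$, which also gives $V_X\asymp T^{-1}$ and $(\sqrt{V_Z}-\sqrt{V_X})^2\leq(V_Z-V_X)^2/V_Z=\mathcal O(T^{-1-2\gamma})$. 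A short computation then yields $\E[(F_T-\overline{V}_T)^2]=\mathcal O(T^{1-2\gamma})$, which is $\mathcal O(T^{1-\gamma})$ since $\gamma>0$, and hence $d_{TV}(F_T,\overline{V}_T)\leq CT^{(1-2\gamma)/4}\leq CT^{(1-\gamma)/4}$ for $T\geq1$ (the range $T<1$ is trivial since $d_{TV}\leq1$). The Wasserstein statement follows identically, using the $d_W$-analogues of these estimates (cf.\ Remark~\ref{rem:wass_tv}).

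I expect the main obstacle to be the quantitative control of the remainder $R_T$ --- equivalently, of $\norm{\widehat f_T(X)-\widehat f_T(Z)}_{L^2}$ and of the variance gap $V_X-V_Z$. This is the only place where the asymptotic stationarity hypothesis $\norm{Y_t}_{L^p}=\mathcal O(t^{-\gamma})$ is genuinely used, and it forces one to handle the cross term $2Z_tY_t$ (through Cauchy--Schwarz and hypercontractivity of $Z$) together with the behaviour of $\norm{Y_t}_{L^p}$ near $t=0$. A secondary, structural point is the realization of $X$ on the Wiener space of $Z$, which is what makes $F_T-\overline{V}_T$ a genuine second-chaos integral so that Lemma~\ref{dTV} applies; were $Y$ allowed to be a completely arbitrary process with the stated moment decay, so that $F_T$ need not lie in a fixed chaos, one would have to replace Lemma~\ref{dTV} by a smoothing estimate exploiting that $\overline{V}_T$ has a density bounded uniformly in $T$.
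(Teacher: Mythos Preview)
Your proposal is correct and uses the same core ingredients as the paper --- Theorem~\ref{CLT-hat-STAT} for the $\overline V_T$-to-$\mathcal N$ distance, Lemma~\ref{dTV} for the chaos-to-chaos distance, and the $L^2$ estimate on $\widehat f_T(X)-\widehat f_T(Z)$ from Proposition~\ref{consistency for Z+Y}. The organizational route differs slightly: the paper first inserts the intermediate variable $(\widehat f_T(Z)-f_Z)/\sqrt{Var(\widehat f_T(Z)-f_Z)}$ and then uses Lemma~\ref{upper-d_TV-lemma} to peel off the mean shift $\E[\widehat f_T(X)-f_X]$ and the variance ratio $Var(\widehat f_T(X))/Var(\widehat f_T(Z))$ as two separate error terms, whereas you apply Lemma~\ref{dTV} directly to the fully standardized pair $(F_T,\overline V_T)$ and absorb both the mean and variance corrections into the single bound on $\E[(F_T-\overline V_T)^2]$. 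Your route is marginally more economical (one lemma fewer) at the price of a little extra algebra in controlling $(\sqrt{V_X}-\sqrt{V_Z})^2$; the paper's route keeps each error source visible. Both rely --- as you rightly flag --- on realizing $X$ on the Wiener space of $Z$ so that the difference lies in the second chaos, which is also implicit in the paper's invocation of Lemma~\ref{dTV}.
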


\begin{proof} By the triangle inequality and Lemma~\eqref{upper-d_TV-lemma}:
\begin{align}
\notag d_{TV} &\left(\frac{\widehat{f}_T(X) - f_X - \E[\widehat{f}_T(X) - f_X] }{\sqrt{ Var(\widehat{f}_T(X) - f_X)} }, \mathcal{N}\right)  \\
\notag \leq & \quad d_{TV}\left(\frac{\widehat{f}_T(Z) - f_Z  }{\sqrt{ Var(\widehat{f}_T(Z) - f_Z)} }, \mathcal{N}\right)  + d_{TV}\left(\frac{\widehat{f}_T(X) - f_Z }{\sqrt{ Var(\widehat{f}_T(Z) - f_Z)} }, \frac{\widehat{f}_T(Z) - f_Z }{\sqrt{ Var(\widehat{f}_T(Z) - f_Z)} }\right)  \\
\label{eq:nonstat_cont_breakdown} & \quad + \sqrt{\frac{\pi}{2}} \left| \frac{\E[ \widehat{f}_T(X) - f_X] }{\sqrt{Var(\widehat{f}_T(X) - f_X)}} \right| + 2 \left| 1 - \frac{Var(\widehat{f}_T(X) - f_X)}{Var(\widehat{f}_T(Z) - f_Z)}\right|.
\end{align}
We analyze each term on the right hand side of~\eqref{eq:nonstat_cont_breakdown}. First, by Theorem~\ref{CLT-hat-STAT},
\begin{align}
\notag d_{TV}\left(\frac{\widehat{f}_T(Z) - f_Z  }{\sqrt{ Var(\widehat{f}_T(Z) - f_Z)} }, \mathcal{N}\right) \leq \varphi_T(Z),
\end{align}
with $\varphi_T(Z)$ as defined in~\eqref{eq:varphi_bound}.

Then, using standard properties of the total variation distance and Lemma~\ref{dTV} :
\begin{align}
\notag d_{TV}&\left(\frac{\widehat{f}_T(X) - f_Z }{\sqrt{ Var(\widehat{f}_T(Z) - f_Z)} }, \frac{\widehat{f}_T(Z) - f_Z }{\sqrt{ Var(\widehat{f}_T(Z) - f_Z)} }\right)   \\
\notag =  & \quad d_{TV} \left( \widehat{f}_T(X) - f_Z, \widehat{f}_T(Z) - f_Z\right) \leq C \left( \frac{ \E |\widehat{f}_T(X) -  \widehat{f}_T(Z)|^2 }{\E | \widehat{f}_T(Z) - f_Z|^2 }\right)^{1/4} \leq C T^{(1 - \gamma)/4}.
\end{align}
Indeed, $\E |\widehat{f}_T(X) -  \widehat{f}_T(Z)|^2 = \mathcal{O}(T^{- \gamma})$ and $T \E | \widehat{f}_T(Z) - f_Z|^2 \to \sigma_Z^2 < \infty$, see~\eqref{eq:VT2_finite}. Moreover since $|\E[ \widehat{f}_T(X) - f_X ] | = \mathcal{O}(T^{- \gamma})$, one has
\begin{align}
\label{eq:diff_var} \left| Var(\widehat{f}_T(X)  - f_X) - Var(\widehat{f}_T(Z)  - f_Z) \right| = \mathcal{O}(T^{- \gamma}).
\end{align}
Then, there exists a constant $C > 0$ such that for all $T > 0$,
\begin{align}
\notag \sqrt{\frac{\pi}{2}} \left| \frac{\E[ \widehat{f}_T(X) - f_X] }{\sqrt{Var(\widehat{f}_T(X) - f_X)}} \right|  \leq  C T^{1/2 - \gamma} \quad \mbox{and}  \quad 2 \left| 1 - \frac{Var(\widehat{f}_T(X) - f_X)}{Var(\widehat{f}_T(Z) - f_Z)}\right| \leq  C T^{1 - \gamma}.
\end{align}
Therefore,
\begin{align}
\notag d_{TV} &\left(\frac{\widehat{f}_T(X) - f_X - \E[\widehat{f}_T(X) - f_X] }{\sqrt{ Var(\widehat{f}_T(X) - f_X)} }, \mathcal{N}\right) \leq \varphi_T(Z) + CT^{\frac{1 - \gamma}{4}},
\end{align}
as desired.
The corresponding bound for the Wasserstein distance follows from Remark~\ref{rem:wass_tv}.
\end{proof}

We now turn to the discrete version of the previous result.
\begin{theorem}\label{CLT-tilde-NONSTAT}  Assume $\int_{\R}\rho^2(r)dr<\infty$, $Z$ verifies (\ref{helix-property}), and the condition (\ref{hypothesis-on-Z+Y})   holds. Let $ \mathcal{N}\sim\mathcal{N}(0,1)$. Then, there is $C>0$ such that, for every $n\geq1$,
\begin{eqnarray*}
d_{TV}\left(\frac{\sqrt{T_n}}{\sigma_Z} (\widetilde{f}_n(X) - f_X), N\right) \leq \varphi_{T_n}(Z)+  C
\left[n\Delta_n^{2\alpha+1}\right]^{1/4}+CT_n^{\frac{1-\gamma}{4}}.
\end{eqnarray*}
\end{theorem}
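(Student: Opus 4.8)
The plan is to combine the triangle inequality for the total variation distance with the tools already assembled: Theorem~\ref{CLT-tilde-STAT} (for the stationary part), Lemma~\ref{dTV} (to compare the discrete estimator of $X$ with that of $Z$), Lemma~\ref{upper-d_TV-lemma} (to absorb the bias and the variance mismatch), and the asymptotic-unbiasedness and $L^2$-comparison estimates proved earlier in this section. Concretely, I would first reduce to the stationary case by writing
\begin{align}
\notag d_{TV}\!\left(\frac{\sqrt{T_n}}{\sigma_Z}(\widetilde{f}_n(X) - f_X), \mathcal{N}\right) \leq \; & d_{TV}\!\left(\frac{\sqrt{T_n}}{\sigma_Z}(\widetilde{f}_n(X) - f_X), \frac{\sqrt{T_n}}{\sigma_Z}(\widetilde{f}_n(Z) - f_Z)\right) \\
\notag & + d_{TV}\!\left(\frac{\sqrt{T_n}}{\sigma_Z}(\widetilde{f}_n(Z) - f_Z), \mathcal{N}\right).
\end{align}
The second term is controlled by the version of Theorem~\ref{CLT-tilde-STAT} phrased with the $\sigma_Z$-normalization, i.e.\ the Corollary at the end of Section~\ref{sec:stationary}, which gives $\varphi_{T_n}(Z) + 2|1 - \sigma_Z^2/\E[V_{T_n}(Z)^2]| + C[n\Delta_n^{2\alpha+1}]^{1/4}$; since $\E[V_{T_n}(Z)^2]\uparrow \sigma_Z^2$ with $|\E[V_{T_n}(Z)^2] - \sigma_Z^2| = \mathcal{O}(T_n^{-1})$ by~\eqref{eq:vt_conv}, that middle term is of lower order and is absorbed into the stated bound.

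For the first term, since $f_X = f_Z$, the difference of the two standardized variables is $\frac{\sqrt{T_n}}{\sigma_Z}(\widetilde{f}_n(X) - \widetilde{f}_n(Z)) = \frac{\sqrt{T_n}}{\sigma_Z}\cdot\frac1n\sum_{i=1}^n (X_{t_i}^2 - Z_{t_i}^2)$. I would not apply Lemma~\ref{dTV} directly here because $\widetilde{f}_n(X) - \widetilde{f}_n(Z)$ need not be a second-order chaos. Instead I would proceed in two stages: first pass from $\widetilde{f}_n(X) - f_X$ to $\widehat{f}_{T_n}(X) - f_X$ (a difference of genuine second-order objects up to a small non-chaos remainder) and from $\widehat{f}_{T_n}(X) - f_X$ to $\widehat{f}_{T_n}(Z) - f_Z$, then from $\widehat{f}_{T_n}(Z) - f_Z$ back to $\widetilde{f}_n(Z) - f_Z$, applying Lemma~\ref{dTV} (or Lemma~\ref{upper-d_TV-lemma} for the bias piece) at each stage and collecting the errors. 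The two continuous-time comparisons are exactly those appearing in the proof of Theorem~\ref{CLT-hat-NONSTAT}: by~\eqref{eq:estim_y}, $\E|\widehat{f}_{T_n}(X) - \widehat{f}_{T_n}(Z)|^2 = \mathcal{O}(T_n^{-2\gamma})$, and the normalizing variance satisfies $T_n\,\E|\widehat{f}_{T_n}(Z) - f_Z|^2 \to \sigma_Z^2$, so Lemma~\ref{dTV} yields a contribution of order $(T_n^{-2\gamma}/T_n^{-1})^{1/4} = T_n^{(1-2\gamma)/4} \leq CT_n^{(1-\gamma)/4}$; the bias $|\E[\widehat{f}_{T_n}(X) - f_X]| = \mathcal{O}(T_n^{-\gamma})$ and the variance mismatch $|1 - Var(\widehat{f}_{T_n}(X)-f_X)/Var(\widehat{f}_{T_n}(Z)-f_Z)| = \mathcal{O}(T_n^{1-\gamma})$ contribute at most $CT_n^{1/2-\gamma}$ and $CT_n^{1-\gamma}$ respectively, all of which are $\leq CT_n^{(1-\gamma)/4}$ since $\gamma > 1$ and $T_n \to \infty$. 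The discrete-versus-continuous comparison for $Z$ is handled exactly as in~\eqref{d_TV-continuous-discrete}: Lemma~\ref{dTV} together with~\eqref{estimate-difference} and~\eqref{eq:VT2_finite} gives the term $C[n\Delta_n^{2\alpha+1}]^{1/4}$.

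The main obstacle, and the place where care is needed, is the bookkeeping of the normalizing constants: the various terms in the triangle-inequality chain are standardized by different quantities ($\sigma_Z/\sqrt{T_n}$, $\sqrt{Var(\widehat{f}_{T_n}(Z)-f_Z)}$, $\sqrt{Var(\widehat{f}_{T_n}(X)-f_X)}$, $\sqrt{Var(\widetilde{f}_n(Z)-f_Z)}$), and each change of normalization must be paid for via Lemma~\ref{upper-d_TV-lemma}, producing the ratio-of-variances error terms. One must check that every such ratio is within $\mathcal{O}(T_n^{-1} + n\Delta_n^{2\alpha+1} + T_n^{1-\gamma})$ of $1$ — which follows from~\eqref{eq:vt_conv}, \eqref{estimate-difference}, and~\eqref{eq:diff_var} — so that all of them are dominated by the three displayed error terms. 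Once this is organized, the Wasserstein version follows verbatim from Remark~\ref{rem:wass_tv}, and the theorem is proved.
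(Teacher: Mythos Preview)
Your overall strategy---triangle inequality, reduce to the stationary discrete case via the Corollary after Theorem~\ref{CLT-tilde-STAT}, and handle the $X$-versus-$Z$ discrepancy by Lemma~\ref{dTV} together with Lemma~\ref{upper-d_TV-lemma}---is exactly what the paper does. The paper's own proof is two lines: ``follow the steps of Theorem~\ref{CLT-hat-NONSTAT}, with the extra $[n\Delta_n^{2\alpha+1}]^{1/4}$ term coming from Theorem~\ref{CLT-tilde-STAT}.'' Your proposal is a faithful elaboration of that.

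Two points where your write-up could be tightened. First, the detour through the continuous estimators $\widehat f_{T_n}(X)$ and $\widehat f_{T_n}(Z)$ is unnecessary, and your stated reason for it is misplaced. Lemma~\ref{dTV} requires that $F$ and $G$ each lie in the second Wiener chaos, not that $F-G$ does. Section~\ref{sec:asym} is explicitly about \emph{Gaussian} processes $X=Z+Y$, so $\widetilde f_n(X)-\E[\widetilde f_n(X)]$ is a second-chaos random variable and Lemma~\ref{dTV} applies directly to the pair $(\widetilde f_n(X),\widetilde f_n(Z))$ after centering---this is precisely how the paper uses it in the proof of Theorem~\ref{CLT-hat-NONSTAT}. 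Your proposed chain through $\widehat f_{T_n}(X)$ would face the same chaos question and does not sidestep it.

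Second, your assertion that $|\E[V_{T_n}(Z)^2]-\sigma_Z^2|=\mathcal O(T_n^{-1})$ ``by~\eqref{eq:vt_conv}'' is not justified: \eqref{eq:vt_conv} only gives $\E[V_T(Z)^2]\uparrow\sigma_Z^2$, and a quantitative rate requires a decay hypothesis on $\rho$ as in~\eqref{eq:bound_vt_sz}. This term appears explicitly in the Corollary after Theorem~\ref{CLT-tilde-STAT} but is absent from the statement of Theorem~\ref{CLT-tilde-NONSTAT}; you have correctly identified where it enters, but you cannot absorb it into the three displayed errors under the stated hypotheses alone. This is arguably an imprecision in the paper's statement rather than in your argument.
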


\begin{proof} We follows the same steps as in the proof of Theorem~\ref{CLT-tilde-NONSTAT}. The extra term $Cn [ \Delta_n^{2 \alpha + 1}]^{1/4}$ comes from the bound in Theorem~\ref{CLT-tilde-STAT}.

\end{proof}

\section{Applications to Gaussian Ornstein-Uhlenbeck processes}\label{sec:OU}
In this section we  consider the Gaussian Ornstein-Uhlenbeck process $X^{\theta} \coloneqq \{X_{t}^{\theta}, t\geq 0\}$ defined via  the following linear stochastic differential equation
\begin{equation}
dX_{t}^{\theta}=-\theta X_{t}^{\theta}dt+dG_{t},\quad X_{0}^{\theta}=0,\quad \label{GOU}.
\end{equation}%
Here  $\theta>0$ is an unknown parameter and  $G$ is an arbitrary mean-zero Gaussian process such that $Z_{t}^{\theta} \coloneqq \int_{-\infty }^{t}e^{-\theta (t-s)}dG_s$, for $t \geq 0$,  is a stationary Gaussian process.
The equation~\eqref{GOU} has the following explicit solution (see~\cite{CKM})
\begin{equation*}
X_{t}^{\theta}=e^{-\theta t}\int_{0}^{t} e^{\theta s}dG_{s},\quad t\geq 0,
\end{equation*}
where the integral can be understood in the Wiener sense.  Then $X_t^{\theta} = Z_t^{\theta} + e^{-\theta t}Z_0^{\theta}$ thus  satisfies~\eqref{hypothesis-on-Z+Y} with $Y \coloneqq e^{- \theta t} Z_0^\theta$. Moreover, $\norm{Y_T}_{L^p} = \mathcal{O}(T^{- \gamma})$ is satisfied for every $p \geq 1$ and for every constant $\gamma > 1$.

As in the previous sections our goal is to establish a Berry-Esseen theorem involving estimators of the parameters of the process. Here we want to estimate  $\theta$. Our approach is based on writing
\begin{align}
\notag \theta = g_{Z^\theta} ( \E [ (Z_0^\theta)^2 ] ),
\end{align}
for some invertible function  $g_{Z^\theta} : \R^+ \to \R^+$. There are certain cases when there is an explicit expression for $g_{Z^\theta}$ (or its inverse). We explore those in the coming sections. For now, we describe the general treatment and make only one assumption for $g_{Z^\theta}$:
\begin{assumption}\label{assume:diffeo} The function $g_{Z^{\theta}}$ is a diffeomorphism, and is twice continuously differentiable.
\end{assumption}

We estimate $\theta $ based the continuous and discrete observations of $X$:
\begin{align}
\label{estim-cont-GOU} \widehat{\theta}_T \coloneqq & g_{Z^{\theta}} \left( \frac{1}{T} \int_{0}^{T}  (X_{t}^{\theta})^{2} dt \right) = g_{Z^{\theta}}\left(\widehat{f}_T(X^{\theta})\right),\quad  T>0, \\
\label{estim-disc-GOU} \widetilde{\theta}_n  \coloneqq & g_{Z^{\theta}}\left(\frac{1}{n} \sum_{i=1}^{n} (X_{t_{i}}^{\theta})^{2}\right)=g_{Z^{\theta}}\left(\widetilde{f}_n(X^{\theta})\right),\quad n\geq1,
\end{align}
where $t_{i} = i \Delta_{n}$, $  i=0,\ldots, n $, $\Delta_{n}\rightarrow0$ and  $ T_{n} = n \Delta_{n}\rightarrow \infty$, whereas $\widehat{f}_T(X^{\theta})$ and $\widetilde{f}_n(X^{\theta})$ are given by (\ref{estim-cont-NONSTAT}) and (\ref{estim-disc-NONSTAT}), respectively.

The following holds for the continuous estimator $\widehat{\theta}_T$.
 \begin{theorem}\label{CLT-hate-NONSTAT-FOU} Assume that $\int_{\R}\rho^2(r)dr<\infty$ and that Assumption~\ref{assume:diffeo} holds. Then for every  $\gamma >1$, and $p \geq 1$,
 \begin{align}
\label{eq:dw_bound_ou_gen} d_{W} \left( \frac{\widehat{\theta}_T - \theta - \E[ \widehat{\theta} - \theta] }{g'_{Z^\theta}(f_{X^\theta}) \sqrt{ Var( \widehat{f}_T(X^\theta) - f_{X^\theta})}} , \mathcal{N} \right) \leq C\frac{ \left(\E|g''(\zeta_{T})|^p\right)^{1/p}}{\sqrt{T}} +\varphi_T(Z^{\theta})+CT^{-1/2},
\end{align}
for some absolute constant $ C> 0$ and where $\zeta_T$ is a random variable in $[ \widehat{f}_T(X^\theta), f_{X^\theta}]$.
\end{theorem}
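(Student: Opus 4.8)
The plan is to reduce the statement to Theorem~\ref{CLT-hat-NONSTAT} by a Taylor expansion of $g_{Z^\theta}$ around $f_{X^\theta}$, treating the second-order term as an error. Write $a_T \coloneqq \widehat{f}_T(X^\theta)$ and $f \coloneqq f_{X^\theta}$, so that $\widehat{\theta}_T = g_{Z^\theta}(a_T)$ and $\theta = g_{Z^\theta}(f)$. By Assumption~\ref{assume:diffeo}, $g_{Z^\theta}$ is $C^2$, so a Taylor expansion with Lagrange remainder gives
\begin{align}
\notag \widehat{\theta}_T - \theta = g'_{Z^\theta}(f)(a_T - f) + \tfrac12 g''_{Z^\theta}(\zeta_T)(a_T - f)^2,
\end{align}
for a random $\zeta_T$ between $a_T$ and $f$. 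Subtracting the expectation and dividing by $g'_{Z^\theta}(f)\sqrt{Var(a_T - f)}$, the normalized estimator becomes
\begin{align}
\notag \frac{\widehat{\theta}_T - \theta - \E[\widehat{\theta}_T - \theta]}{g'_{Z^\theta}(f)\sqrt{Var(a_T-f)}} = \frac{a_T - f - \E[a_T-f]}{\sqrt{Var(a_T-f)}} + R_T,
\end{align}
where $R_T \coloneqq \tfrac12 \big( g''_{Z^\theta}(\zeta_T)(a_T-f)^2 - \E[g''_{Z^\theta}(\zeta_T)(a_T-f)^2] \big)\big/\big(g'_{Z^\theta}(f)\sqrt{Var(a_T-f)}\big)$.

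Next I would control the two pieces separately. For the leading term, apply Theorem~\ref{CLT-hat-NONSTAT} directly: since $X^\theta = Z^\theta + Y^\theta$ with $\norm{Y^\theta_T}_{L^p} = \mathcal{O}(T^{-\gamma})$ for every $\gamma > 1$ and every $p \geq 1$, we get
\begin{align}
\notag d_W\!\left( \frac{a_T - f - \E[a_T - f]}{\sqrt{Var(a_T-f)}}, \mathcal{N} \right) \leq \varphi_T(Z^\theta) + C T^{(1-\gamma)/4},
\end{align}
and since $\gamma>1$ can be taken arbitrarily large, the last term is $\mathcal{O}(T^{-1/2})$ (or better), which explains the $CT^{-1/2}$ in the claimed bound. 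For the remainder, note that $d_W$ satisfies $d_W(U + R_T, \mathcal{N}) \leq d_W(U, \mathcal{N}) + \E|R_T|$ by the Lipschitz-test-function definition~\eqref{eq:def_w}. So it remains to bound $\E|R_T|$. Here $g'_{Z^\theta}(f)$ is a positive constant (diffeomorphism), and $\sqrt{Var(a_T-f)} = \norm{a_T - f}_{L^2}\big(1+o(1)\big)$ is comparable to $\sigma_{Z^\theta}/\sqrt{T}$ by Proposition~\ref{consistency for Z+Y} and~\eqref{eq:VT2_finite}–\eqref{eq:diff_var}; in particular $\sqrt{Var(a_T - f)} \geq c/\sqrt{T}$ for large $T$. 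Thus
\begin{align}
\notag \E|R_T| \leq \frac{C}{\sqrt{T}}\,\E\big[ |g''_{Z^\theta}(\zeta_T)|\,(a_T - f)^2 \big] \leq \frac{C}{\sqrt{T}}\,\big(\E|g''_{Z^\theta}(\zeta_T)|^p\big)^{1/p}\,\norm{a_T - f}_{L^{2p'}}^2,
\end{align}
by Hölder with conjugate exponents $p, p'$. Since $a_T - f = \widehat{f}_T(X^\theta) - f_{X^\theta}$ lies in a fixed sum of Wiener chaoses up to the $Y^\theta$ contribution, the hypercontractivity estimate~\eqref{hypercontractivity} together with Proposition~\ref{consistency for Z+Y} gives $\norm{a_T - f}_{L^{2p'}} \leq C\norm{a_T - f}_{L^2} \leq C/\sqrt{T}$ (strictly, one splits $a_T - f$ into the chaos part $\widehat{f}_T(Z^\theta) - f_{Z^\theta}$, to which hypercontractivity applies, plus the $\mathcal{O}(T^{-\gamma})$ term from $Y^\theta$). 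Hence $\E|R_T| \leq C T^{-3/2}\big(\E|g''_{Z^\theta}(\zeta_T)|^p\big)^{1/p}$, which is dominated by the stated $C T^{-1/2}\big(\E|g''(\zeta_T)|^p\big)^{1/p}$ term. Collecting the three contributions yields~\eqref{eq:dw_bound_ou_gen}.

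The main obstacle, I expect, is the bookkeeping around $Var(a_T - f)$: one must confirm it is nondegenerate and of order $T^{-1}$ so that division by $\sqrt{Var(a_T-f)}$ is harmless, and one must be careful that the centering $\E[\widehat\theta_T - \theta]$ is handled consistently on both sides (it includes the $\E$ of the quadratic remainder, which is exactly why $R_T$ appears centered). A secondary subtlety is that $\zeta_T$ depends on $\omega$, so $\E|g''_{Z^\theta}(\zeta_T)|^p$ cannot be simplified further without more information on $g''_{Z^\theta}$; leaving it as is in the bound is the right move, and in the concrete first/second-kind cases of the next sections $g''_{Z^\theta}$ will be bounded on the relevant range so this factor is $\mathcal{O}(1)$. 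Everything else — the Taylor expansion, Hölder, hypercontractivity, and the appeal to Theorem~\ref{CLT-hat-NONSTAT} — is routine.
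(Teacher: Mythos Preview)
Your approach---Taylor expansion around $f_{X^\theta}$, the Wasserstein shift inequality $d_W(U+R,\mathcal{N})\leq d_W(U,\mathcal{N})+\E|R|$, Theorem~\ref{CLT-hat-NONSTAT} for the leading term, and H\"older plus hypercontractivity for the quadratic remainder---is exactly the paper's. One arithmetic slip to correct: since $\sqrt{Var(a_T-f)}\geq c/\sqrt{T}$, dividing by it contributes a factor of order $\sqrt{T}$, not $1/\sqrt{T}$; hence the true remainder bound is $\E|R_T|\leq C\sqrt{T}\,(\E|g''(\zeta_T)|^p)^{1/p}\,\|a_T-f\|_{L^{2p'}}^2\leq C(\E|g''(\zeta_T)|^p)^{1/p}\,T^{-1/2}$, which matches the stated rate on the nose rather than merely being dominated by it (your $T^{-3/2}$ is too optimistic).
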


\begin{proof} Recall that by definition $\theta = g_{Z^\theta} ( f_{X^\theta})$. Under Assumption~\ref{assume:diffeo}
\begin{align}
\notag \left( \widehat{\theta}_{T}-\theta \right) =g_{Z^{\theta}}'(f_{X^{\theta}})\left(\widehat{f}_T (X^{\theta}) - f_{X^{\theta}}\right)+\frac{1}{2} g_{Z^{\theta}}''(\zeta_{T})\left(\widehat{f}_T(X^{\theta})-f_{X^{\theta}}\right)^2
\end{align}
for some random point $\zeta_{T}$ between $\widehat{f}_T(X^{\theta})$ and $f_{X^{\theta}}$. Denote $V_f^2 \coloneqq Var(\widehat{f}_T(X^\theta) - f_{X^\theta})$. Then,
\begin{align}
\notag d_{W}  \left(\frac{\widehat{\theta}_T-\theta - \E[ \widehat{\theta}_T -\theta]}{g_{Z^{\theta}}'(f_{X^{\theta}})V_f} , \mathcal{N} \right) \leq  & \frac{ |\E [\widehat{\theta} - \theta]| + \frac{1}{2} \E \left| (\widehat{f}_T(X^\theta) - f_{X^\theta})^2 g_{Z^\theta}'' (\zeta_T)  \right| }{  | g_{Z^\theta}' (f_{X^\theta} ) V_f |}  \\
\notag & + d_{W}\left(\frac{1}{V_f} (\widehat{f}_T(X^{\theta}) - f_{X^{\theta}}), \mathcal{N}\right),
\end{align}
where we have used that $d_W(x_1 + x_2, y) \leq  \E [| x_2|] + d_W(x_1, y)$ for any random variables $x_1, x_2, y$. This property of the Wasserstein distance is the main reason our results in Section~\ref{sec:OU} concern $d_W$ and not $d_{TV}$.

The second term in the  inequality above is bounded in Theorem~\ref{CLT-hat-NONSTAT}. By H\"older's inequality, and the hypercontractivity property, for  $p,q>1$ with $1/p + 1/q = 1$,
\begin{align}
\notag  \E \left| (\widehat{f}_T(X^\theta) - f_{X^\theta})^2 g_{Z^\theta}'' (\zeta_T)  \right| \leq  & \quad \left( \E|g''(\zeta_{T})|^p\right)^{1/p} \left( \E \left|\widehat{f}_T(X^\theta) - f_{X^\theta} \right|^{2q}  \right)^{1/q} \\
\notag \leq & \quad C\left( \E|g''(\zeta_{T})|^p\right)^{1/p} \E \left|\widehat{f}_T(X^\theta) - f_{X^\theta} \right|^2,
\end{align}
for some constant $C> 0$ depending on $p$. Moreover,
\begin{align}
\notag | \E [\widetilde{\theta} - \theta]| \leq |g_{Z^{\theta}}'(f_{X^{\theta}})| \left| \E [ \widehat{f}_T (X^{\theta}) - f_{X^{\theta}}] \right| + \E \left| (\widehat{f}_T(X^\theta) - f_{X^\theta})^2 g_{Z^\theta}'' (\zeta_T)  \right|.
\end{align}
Recall, that by~\eqref{eq:bound_diff_asympt}, $\E | \widehat{f}_T(X^\theta) - f_{X^\theta}|^2 \leq C /T$, and by~\eqref{eq:nonstat_unbiased} $|\E [ \widehat{f}_T(X^\theta) - f_{X^\theta}] | \leq C T^{- \gamma}$.
Therefore,
\begin{align}
\notag \E | \widehat{\theta} - \theta| + \frac{1}{2} \E \left| (\widehat{f}_T(X^\theta) - f_{X^\theta})^2 g_{Z^\theta}'' (\zeta_T)  \right| \leq C \left( |g'_{Z^\theta}(f_{X^\theta}) | + \frac{3}{2} \left(\E|g''(\zeta_{T})|^p\right)^{1/p} \right) T^{- 1}.
\end{align}
 Now, by~\eqref{eq:VT2_finite} and~\eqref{eq:diff_var}, $ TV_f^2  \to \sigma_z^2$. The bound~\eqref{eq:dw_bound_ou_gen} follows.

\end{proof}

Similarly, we obtain the rate of convergence in law of $ \sqrt{T_n} (\widetilde{\theta}_n-\theta)$ as follows.

 \begin{theorem}\label{CLT-tilde-NONSTAT-FOU} Suppose that the conditions of Theorem  \ref{CLT-tilde-NONSTAT} hold. If $g''(\xi_{n})$, with $\xi_{n} \in [|\widetilde{f}_n(X^{\theta}), f_{X^{\theta}}|]$, has a moment of order greater than $1$ which is bounded in $n$,
 \begin{eqnarray*}
d_{W}\left(\frac{\sqrt{T_n}}{\sigma_{Z^{\theta}}g_{Z^{\theta}}'(f_{X^{\theta}})}(\widetilde{\theta}_n-\theta), \mathcal{N} \right) \leq \frac{C}{\sqrt{T_n}} +\varphi_{T_n}(Z^{\theta})+  C\left[n\Delta_n^{2\alpha+1}\right]^{1/4}.
\end{eqnarray*}
\end{theorem}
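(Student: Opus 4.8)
The plan is to mirror the proof of Theorem~\ref{CLT-hate-NONSTAT-FOU}, but replacing every appeal to the continuous-time results by their discrete-time counterparts. First I would write the second-order Taylor expansion
\begin{align}
\notag \widetilde{\theta}_n - \theta = g_{Z^\theta}'(f_{X^\theta}) \left( \widetilde{f}_n(X^\theta) - f_{X^\theta} \right) + \frac{1}{2} g_{Z^\theta}''(\xi_n) \left( \widetilde{f}_n(X^\theta) - f_{X^\theta} \right)^2,
\end{align}
valid by Assumption~\ref{assume:diffeo}, for some random $\xi_n$ between $\widetilde{f}_n(X^\theta)$ and $f_{X^\theta}$. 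Dividing by $\sigma_{Z^\theta} g_{Z^\theta}'(f_{X^\theta}) / \sqrt{T_n}$ and using the elementary bound $d_W(x_1 + x_2, y) \leq \E|x_2| + d_W(x_1, y)$ (the same device used in Theorem~\ref{CLT-hate-NONSTAT-FOU}, which forces us to work with $d_W$ rather than $d_{TV}$), I reduce the problem to controlling three quantities: the Wasserstein distance $d_W\big( \tfrac{\sqrt{T_n}}{\sigma_{Z^\theta}}(\widetilde{f}_n(X^\theta) - f_{X^\theta}), \mathcal{N}\big)$, the expected remainder $\E\big| (\widetilde{f}_n(X^\theta) - f_{X^\theta})^2 g_{Z^\theta}''(\xi_n)\big|$ rescaled by $\sqrt{T_n}$, and the centering term $\sqrt{T_n}\,|\E[\widetilde{\theta}_n - \theta]|$.

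For the first quantity, I would invoke Theorem~\ref{CLT-tilde-NONSTAT} (applied with $Z = Z^\theta$, which satisfies the helix property, and $Y = e^{-\theta t} Z_0^\theta$, which satisfies~\eqref{hypothesis-on-Z+Y} for every $\gamma > 1$); since $\gamma$ may be taken arbitrarily large, the term $C T_n^{(1-\gamma)/4}$ is absorbed into $C T_n^{-1/2}$, and indeed one can arrange $CT_n^{(1-\gamma)/4}\le C/\sqrt{T_n}$, giving
\begin{align}
\notag d_W\left( \frac{\sqrt{T_n}}{\sigma_{Z^\theta}} (\widetilde{f}_n(X^\theta) - f_{X^\theta}), \mathcal{N} \right) \leq \varphi_{T_n}(Z^\theta) + C\left[ n \Delta_n^{2\alpha+1}\right]^{1/4} + \frac{C}{\sqrt{T_n}}.
\end{align}
For the remainder term, I would use H\"older's inequality with exponents $p, q$ satisfying $1/p + 1/q = 1$, bounding $\E| (\widetilde{f}_n(X^\theta) - f_{X^\theta})^2 g_{Z^\theta}''(\xi_n)| \leq (\E|g_{Z^\theta}''(\xi_n)|^p)^{1/p} (\E|\widetilde{f}_n(X^\theta) - f_{X^\theta}|^{2q})^{1/q}$, then applying hypercontractivity~\eqref{hypercontractivity} to replace the $2q$-norm by the $2$-norm up to a constant; by the hypothesis $(\E|g_{Z^\theta}''(\xi_n)|^p)^{1/p}$ is bounded in $n$. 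Combined with $\E|\widetilde{f}_n(X^\theta) - f_{X^\theta}|^2 \le C/T_n$ — which follows from~\eqref{eq:bound_diff_asympt} together with~\eqref{estimate-difference} (compare the proof of Proposition~\ref{consistency for Z+Y}) — this yields a remainder of order $T_n^{-1}$, so after multiplication by $\sqrt{T_n}$ it contributes $C/\sqrt{T_n}$. The centering term is handled the same way: $|\E[\widetilde{\theta}_n - \theta]| \leq |g_{Z^\theta}'(f_{X^\theta})|\,|\E[\widetilde{f}_n(X^\theta) - f_{X^\theta}]| + \frac12 \E|(\widetilde{f}_n(X^\theta) - f_{X^\theta})^2 g_{Z^\theta}''(\xi_n)|$, and both pieces are $\mathcal{O}(T_n^{-1})$ using~\eqref{eq:nonstat_unbiased} (with $\gamma$ large) and the remainder bound just obtained, contributing again $C/\sqrt{T_n}$ after rescaling. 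Finally I note $T_n V_f^2 \to \sigma_{Z^\theta}^2$ by~\eqref{eq:VT2_finite} and~\eqref{eq:diff_var}, so the normalizing constant $\sigma_{Z^\theta} g_{Z^\theta}'(f_{X^\theta})/\sqrt{T_n}$ and $g_{Z^\theta}'(f_{X^\theta}) V_f$ are comparable; collecting the three bounds gives the claim.

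I do not anticipate a genuine obstacle, as this is a near-verbatim discrete analogue of Theorem~\ref{CLT-hate-NONSTAT-FOU}. The one point that needs slight care is bookkeeping the $[n\Delta_n^{2\alpha+1}]^{1/4}$ term, which is genuinely new relative to the continuous case and must be carried through from Theorem~\ref{CLT-tilde-NONSTAT} without being swallowed by hypercontractivity estimates; and one should double-check that the moment hypothesis on $g_{Z^\theta}''(\xi_n)$ is exactly what is needed to make the H\"older step legitimate with some $p > 1$.
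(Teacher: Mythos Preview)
Your proposal is correct and follows exactly the approach implied by the paper, which for this theorem gives no proof at all beyond the sentence ``Similarly, we obtain the rate of convergence in law of $\sqrt{T_n}(\widetilde{\theta}_n-\theta)$ as follows''; you have in fact supplied more detail than the paper does. One small bookkeeping point: the bound $\E|\widetilde{f}_n(X^\theta)-f_{X^\theta}|^2\le C/T_n$ is not quite right as stated, since comparing $\widetilde{f}_n$ to $\widehat{f}_{T_n}$ via~\eqref{estimate-difference} produces an extra $\Delta_n^{2\alpha}=(n\Delta_n^{2\alpha+1})/T_n$ term; but after multiplication by $\sqrt{T_n}$ this contributes $C(n\Delta_n^{2\alpha+1})/\sqrt{T_n}$, which is dominated by the $C[n\Delta_n^{2\alpha+1}]^{1/4}$ already present in the final bound, so the conclusion is unaffected.
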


\subsection{Fractional Ornstein-Uhlenbeck process of the first kind}\label{FOUsect}

Here we consider the Ornstein-Uhlenbeck process $X^{\theta} \coloneqq \left\{X^{\theta}_{t},t\geq 0\right\} $ driven by a fractional Brownian motion $\left\{B_{t}^{H},t\geq 0\right\} $ of Hurst index $H\in (0,1)$.  More precisely,  $X^{\theta}$ is the solution of the following linear stochastic differential equation
\begin{equation}
X^{\theta}_{0}=0;\quad dX^{\theta}_{t}=-\theta X^{\theta}_{t}dt+dB_{t}^{H},\quad t\geq 0, \label{FOU}
\end{equation}
where $\theta >0$ is an unknown parameter. The process $X^\theta$ is called a fractional Ornstein-Uhlenbeck process of the first kind, following the notation in~\cite{KS}. As in the general case described above there is an explicit solution to~\eqref{FOU}:
\begin{align}
X^{\theta}_{t}=\int_{0}^{t}e^{-\theta (t-s)}dB_{s}^{H}. \label{fOUX}
\end{align}
Moreover,
\begin{equation}
Z_{t}^{\theta }=\int_{-\infty }^{t}e^{-\theta (t-s)}dB_{s}^{H}.\label{Ztheta}
\end{equation}
 is a stationary Gaussian process, see \cite{CKM,EV}. The process $Z^\theta$ is also the stationary solution of equation (\ref{FOU}) when $X^{\theta}_0=Z_{0}^{\theta }$. We are thus in the setup of Section~\ref{sec:asym} with $X^\theta = Z^\theta + Y^\theta$, where $Y_t^\theta=-e^{-\theta t}Z_{0}^{\theta }$. Again, $\norm{Y_t}_{L^p} = \mathcal{O}(t^{- \gamma})$ for every $p \geq 1$ and any $\gamma > 1$.
Note that by integration by parts and the formula for covariance for fractional Brownian motion
\begin{align}
\notag \E [(Z_0^\theta)^2] = &  \E \left( \int_{-\infty}^0 e^{\theta s} dB_s^H \right)^2 = \E \int_{-\infty}^0 \int_{-\infty}^0 B_s^H B_r^H e^{\theta(s + r) }ds dr \\
\notag  = & \theta^2  \int_{-\infty}^0 \int_{-\infty}^0  \frac{1}{2}e^{\theta(s + r)} (|r|^{2H} + |s|^{2H}- |r-s|^{2H} ) dr ds \\
\notag = &  \theta^{-2H} H \Gamma(2H).
\end{align}
Therefore,  $\theta = g_{Z^\theta} (\E[ (Z_0^\theta)^2]$ where the invertible function $g_{Z^\theta}: \R^+ \to \R^+$ is given by
\begin{align}
\label{eq:first_gztheta}g_{Z^{\theta}}(x) =\left(\frac{H\Gamma
(2H)}{x}\right)^{\frac{1}{2H}},\quad x>0.
\end{align}
The estimators $\widehat{\theta}_T$ and $\widetilde{\theta}_T$ given by~\eqref{estim-cont-GOU} and~\eqref{estim-disc-GOU} were carefully studied in~\cite{HNZ}. In particular, due to~\cite[Theorem 9]{HNZ} and~\cite[Theorem 11]{HNZ} the following holds:
\begin{proposition} Let $H \in (0, 3/4)$. The estimators $\widehat{\theta}_T$ and $\widetilde{\theta}_n$ are strongly consistent. Denote
\begin{align}
\label{variance-fOU} \delta_H^2 \coloneqq  \frac{\theta}{(2H)^2} \times \left\{ \begin{array}{ll} (4 H-1)+\frac{2 \Gamma(2-4 H) \Gamma(4 H)}{\Gamma(2 H) \Gamma(1-2 H)} & \text { if } H \in\left(0, \frac{1}{2}\right), \\
(4 H-1) \left( 1+\frac{\Gamma(3-4 H) \Gamma(4 H-1)}{\Gamma(2-2 H) \Gamma(2 H)}\right) & \text { if } H \in\left[ \frac{1}{2}, \frac{3}{4} \right). \end{array} \right.
\end{align}
The following limit theorems hold:
\begin{enumerate}
\item $\sqrt{T}(\widehat{\theta}_T - \theta) \overset{\mathcal{L}}{\rightarrow} \mathcal{N}(0, \delta_H^2 ) $ as $T \to \infty$.

\item Assume there is $p \in (1, \frac{3 + 2H}{1 + 2H} \wedge (1 + 2H))$ such that $n \Delta_n^p \to 0$. Then $\sqrt{T_n} (\widetilde{\theta} - \theta)  \overset{\mathcal{L}}{\rightarrow} \mathcal{N}(0, \delta_H^2 ) $ as $n \to \infty$.
\end{enumerate}

\end{proposition}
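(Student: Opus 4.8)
The plan is to derive both items from the general Berry--Esseen bounds of Sections~\ref{sec:asym} and~\ref{sec:OU}, specialized to the fractional Ornstein--Uhlenbeck process of the first kind; the convergences in law themselves are originally due to~\cite[Theorems 9 and 11]{HNZ}, which one could also simply quote. First I would check the structural hypotheses. The map $g_{Z^{\theta}}$ in~\eqref{eq:first_gztheta} is an explicit $C^{\infty}$, strictly decreasing bijection of $(0,\infty)$, so Assumption~\ref{assume:diffeo} holds. It is classical (see~\cite{CKM,EV}) that the covariance $\rho^{\theta}$ of the stationary process $Z^{\theta}$ of~\eqref{Ztheta} satisfies $|\rho^{\theta}(t)|=\mathcal{O}(t^{2H-2})$ as $t\to\infty$, so $\int_{\R}\rho^{\theta}(r)^2\,dr<\infty$ precisely because $H<3/4$, and that $Z^{\theta}$ has the helix property~\eqref{helix-property} with $\alpha=H$ (write $Z_t^{\theta}-Z_0^{\theta}=X_t^{\theta}+(e^{-\theta t}-1)Z_0^{\theta}$ and bound each summand in $L^2$). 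Since $Y_t^{\theta}=-e^{-\theta t}Z_0^{\theta}$ fulfils~\eqref{hypothesis-on-Z+Y} for every $\gamma>1$, Proposition~\ref{consistency for Z+Y} gives $\widehat{f}_T(X^{\theta})\to f_{X^{\theta}}$ almost surely, and, since $n\Delta_n^{p}\to0$ with $p<1+2H=2\alpha+1$ forces $n\Delta_n^{2\alpha+1}\to0$, also $\widetilde{f}_n(X^{\theta})\to f_{X^{\theta}}$ almost surely; continuity of $g_{Z^{\theta}}$ then yields strong consistency of $\widehat{\theta}_T$ and $\widetilde{\theta}_n$.

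For item~(1) I would apply Theorem~\ref{CLT-hate-NONSTAT-FOU}, whose bound $C(\E|g''(\zeta_T)|^p)^{1/p}T^{-1/2}+\varphi_T(Z^{\theta})+CT^{-1/2}$ tends to $0$: indeed $\varphi_T(Z^{\theta})\to0$ by Corollary~\ref{cor:explicit_bounds} (with $\beta=H<3/4$), and the prefactor $(\E|g''(\zeta_T)|^p)^{1/p}$ stays bounded as $T\to\infty$ (see below). Since $\sqrt{T}\,|\E[\widehat{\theta}_T-\theta]|\le CT^{-1/2}\to0$ (read off from the proof of Theorem~\ref{CLT-hate-NONSTAT-FOU}) and $T\cdot Var(\widehat{f}_T(X^{\theta})-f_{X^{\theta}})\to\sigma_{Z^{\theta}}^{2}$ (by~\eqref{eq:VT2_finite} and~\eqref{eq:diff_var}), one may drop the centering and replace the random normalization by $\sigma_{Z^{\theta}}/\sqrt{T}$ by a Slutsky argument, which gives $\sqrt{T}(\widehat{\theta}_T-\theta)\overset{\mathcal{L}}{\rightarrow}\mathcal{N}(0,\sigma_{Z^{\theta}}^{2}\,g_{Z^{\theta}}'(f_{X^{\theta}})^{2})$; evaluating $g_{Z^{\theta}}'(f_{X^{\theta}})$ from~\eqref{eq:first_gztheta} and plugging in the closed form of $\int_0^{\infty}\rho^{\theta}(u)^2\,du$ in terms of Gamma functions identifies this variance with $\delta_H^2$ in~\eqref{variance-fOU} (equivalently, its value can be read off from~\cite[Theorem 9]{HNZ}). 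Item~(2) is entirely parallel via Theorem~\ref{CLT-tilde-NONSTAT-FOU}: its bound $CT_n^{-1/2}+\varphi_{T_n}(Z^{\theta})+C[n\Delta_n^{2\alpha+1}]^{1/4}$ vanishes because $T_n\to\infty$, $\varphi_{T_n}(Z^{\theta})\to0$ as above, and $n\Delta_n^{2H+1}\to0$ under the stated restriction on $p$, and one then concludes exactly as in item~(1).

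The only genuinely delicate step is the uniform control, in $T$ (resp.\ in $n$), of the moments $\E|g_{Z^{\theta}}''(\zeta_T)|^{p}$ and $\E|g_{Z^{\theta}}''(\xi_n)|^{p}$ coming from the second-order Taylor expansions in Theorems~\ref{CLT-hate-NONSTAT-FOU} and~\ref{CLT-tilde-NONSTAT-FOU}. Since $g_{Z^{\theta}}''(x)$ is of order $x^{-1/(2H)-2}$ and blows up as $x\downarrow0$, while $\widehat{f}_T(X^{\theta})=\frac1T\int_0^T(X_t^{\theta})^2\,dt$ and $\widetilde{f}_n(X^{\theta})$ may be small, bounding these moments amounts to a uniform bound on the negative moments of $\widehat{f}_T(X^{\theta})$ and $\widetilde{f}_n(X^{\theta})$ --- which is exactly what Proposition~\ref{prop:bound_ft_p} and its discrete analogue supply, and it is in the discrete case that this is responsible for the sharper mesh condition $1<p<\frac{3+2H}{1+2H}\wedge(1+2H)$ in item~(2). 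Everything else is routine bookkeeping.
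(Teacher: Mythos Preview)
The paper does not give its own proof of this proposition: it is simply attributed to~\cite[Theorems~9 and~11]{HNZ} in the line preceding the statement. Your route is therefore genuinely different: you re-derive the strong consistency and the two CLTs from the paper's own Berry--Esseen machinery (Proposition~\ref{consistency for Z+Y}, Theorems~\ref{CLT-hate-NONSTAT-FOU} and~\ref{CLT-tilde-NONSTAT-FOU}, together with Proposition~\ref{prop:bound_ft_p} for the negative-moment control). This is essentially the computation the paper carries out immediately afterwards in Theorem~\ref{thm:explicit_bounds_ou1} to obtain quantitative bounds, so your approach is correct and has the advantage of being self-contained within the paper; the paper's approach has the advantage of being a one-line citation.

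One inaccuracy: your last paragraph attributes the extra mesh restriction $p<\frac{3+2H}{1+2H}$ in item~(2) to the discrete negative-moment bound of Proposition~\ref{prop:bound_ft_p}. That proposition imposes no such rate condition (it only needs $\Delta_n\to0$ and $n\Delta_n\to\infty$). Your own argument requires only $n\Delta_n^{2H+1}\to0$, i.e.\ $p<1+2H$, coming from the $[n\Delta_n^{2\alpha+1}]^{1/4}$ term in Theorem~\ref{CLT-tilde-NONSTAT-FOU} with $\alpha=H$. The sharper constraint $p<\frac{3+2H}{1+2H}$ (binding only for $H>1/2$) is an artefact of the method in~\cite{HNZ}, not of the machinery here; under your approach the stated hypothesis is sufficient but stronger than necessary.
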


We can extend these results and obtain Berry-Es\'een bounds using Theorems~\ref{CLT-hate-NONSTAT-FOU} and~\ref{CLT-tilde-NONSTAT-FOU}.
\begin{theorem}\label{thm:explicit_bounds_ou1} Let $H\in(0,\frac34)$, and  $\delta_H$ be given by~\eqref{variance-fOU}. Then
\begin{align}
\label{eq:bound_hat_1}d_{W}\left(\frac{\sqrt{T}}{\delta_H} (\widehat{\theta}_T-\theta), \mathcal{N} \right)  & \leq   C \left\{ \begin{array}{ll} \frac{1}{\sqrt{T}} & \mbox{ if }0<H \leq\frac{5}{8},  \\
~~ &  \\
\frac{1}{T^{3-4H}} & \mbox{ if }\frac{5}{8}<H<\frac34. \end{array}\right.
\end{align}
Moreover,
\begin{align}
\label{eq:bound_tilde_1}d_{W}\left( \frac{\sqrt{T_n}}{\delta_H}( \widetilde{\theta}_n - \theta ), \mathcal{N} \right)  & \leq  C \left[ n\Delta_n^{2H+1} \right]^{1/2}+  C \left\{ \begin{array}{ll} \frac{1}{ \sqrt{n \Delta_n} } & \mbox{ if } 0<H \leq \frac{5}{8},  \\
~~ &  \\
\frac{1}{(n\Delta_n){3-4H}} & \mbox{ if }\frac{5}{8}<H<\frac{3}{4}. \end{array}\right.
\end{align}

\end{theorem}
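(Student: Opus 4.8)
The plan is to specialize the general Berry--Esseen bounds for the drift estimators --- Theorem~\ref{CLT-hate-NONSTAT-FOU} for $\widehat{\theta}_T$ and Theorem~\ref{CLT-tilde-NONSTAT-FOU} for $\widetilde{\theta}_n$ --- to the process~\eqref{FOU}, and then make every term on the right-hand side explicit. Two structural inputs are needed. First, the covariance $\rho^\theta(t) = \E[Z_0^\theta Z_t^\theta]$ of the stationary process~\eqref{Ztheta} satisfies $|\rho^\theta(t)| = \mathcal{O}(t^{2H-2})$ as $t\to\infty$ (with exponential decay when $H = 1/2$); this is the classical covariance estimate for the fractional Ornstein--Uhlenbeck process of the first kind (see~\cite{CKM,EV}), so Corollary~\ref{cor:explicit_bounds} and the estimate~\eqref{eq:bound_vt_sz} apply with $\beta = H \in (0,3/4)$. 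Second, $Z^\theta$ behaves locally like fractional Brownian motion, so Assumption~\ref{assume:helix} holds with $\alpha = H$. I also record that $f_{X^\theta} = f_{Z^\theta} = \E[(Z_0^\theta)^2] = \theta^{-2H}H\Gamma(2H) > 0$, that from~\eqref{eq:first_gztheta} one has $g_{Z^\theta}''(x) = c_H\, x^{-1/(2H)-2}$ for an explicit $c_H > 0$, and that $g_{Z^\theta}'(f_{X^\theta})^2\,\sigma_{Z^\theta}^2 = \delta_H^2$ --- the last identity being a direct computation, or a consequence of the delta method applied to the central limit theorem of~\cite{HNZ} recalled above.

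For the continuous estimator I would start from Theorem~\ref{CLT-hate-NONSTAT-FOU}, applied with the exponent $\gamma > 1$ taken as large as convenient (permitted since $\norm{Y_t^\theta}_{L^p} = \mathcal{O}(t^{-\gamma})$ for every $\gamma > 1$), and then re-normalize from $g_{Z^\theta}'(f_{X^\theta})\sqrt{Var(\widehat{f}_T(X^\theta) - f_{X^\theta})}$ to $\delta_H/\sqrt{T}$ and remove the centering $\E[\widehat{\theta}_T - \theta]$ using the scaling and translation properties of $d_W$. The re-normalization error is governed by $|1 - T\,Var(\widehat{f}_T(X^\theta) - f_{X^\theta})/\sigma_{Z^\theta}^2|$, which by~\eqref{eq:diff_var} and~\eqref{eq:bound_vt_sz} is $\mathcal{O}(T^{-1/2})$ for $H \leq 5/8$ and $\mathcal{O}(T^{4H-3})$ for $5/8 < H < 3/4$; the centering error is $\leq CT^{-1/2}$ because $|\E[\widehat{\theta}_T - \theta]| \leq CT^{-1}$ (established in the proof of Theorem~\ref{CLT-hate-NONSTAT-FOU}); and $\varphi_T(Z^\theta)$, bounded by Corollary~\ref{cor:explicit_bounds} with $\beta = H$, obeys the same dichotomy (the logarithmic factor at $H = 2/3$ being dominated by the polynomial rate $T^{4H-3}$ claimed there). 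Added to the $CT^{-1/2}$ term already present in~\eqref{eq:dw_bound_ou_gen}, these reproduce exactly the right-hand side of~\eqref{eq:bound_hat_1}, provided the first term of~\eqref{eq:dw_bound_ou_gen} is also $\mathcal{O}(T^{-1/2})$.

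That last point is the one genuinely delicate step, and the expected main obstacle: one must show $(\E|g_{Z^\theta}''(\zeta_T)|^p)^{1/p}$ is bounded uniformly in $T$ for some $p > 1$. Since $g_{Z^\theta}''$ is positive and strictly decreasing on $(0,\infty)$ and $\zeta_T$ lies between $\widehat{f}_T(X^\theta)$ and $f_{X^\theta}$, we have $g_{Z^\theta}''(\zeta_T) \leq c_H\bigl(\widehat{f}_T(X^\theta)^{-1/(2H)-2} + f_{X^\theta}^{-1/(2H)-2}\bigr)$, so the issue reduces to a uniform-in-$T$ bound on the negative moments $\sup_{T>0}\E\bigl[\widehat{f}_T(X^\theta)^{-q}\bigr]$ with $q = p(1/(2H)+2)$ --- equivalently, to the statement that $\widehat{f}_T(X^\theta) = \frac1T\int_0^T (X_t^\theta)^2\,dt$ puts no mass near $0$, uniformly in $T$. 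This is precisely the content of Proposition~\ref{prop:bound_ft_p}, the result singled out in the introduction as being of independent interest and using novel techniques; granting it, $(\E|g_{Z^\theta}''(\zeta_T)|^p)^{1/p} \leq C$ and the rest of the continuous case is bookkeeping.

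The discrete case runs identically with Theorem~\ref{CLT-tilde-NONSTAT-FOU} replacing Theorem~\ref{CLT-hate-NONSTAT-FOU}, $\alpha = H$, and $T$ replaced throughout by $T_n = n\Delta_n$: $\varphi_{T_n}(Z^\theta)$ and the re-normalization error produce the same dichotomy in $n\Delta_n$, and the moment hypothesis on $g_{Z^\theta}''(\xi_n)$ is verified via the discrete counterpart of Proposition~\ref{prop:bound_ft_p}. The extra discretization term comes from~\eqref{estimate-difference}: since $\E[(\widetilde{f}_n(X^\theta) - \widehat{f}_{T_n}(X^\theta))^2] \leq C\Delta_n^{2H}$, the $L^2$ distance between the normalized continuous and discrete statistics is $\leq C(n\Delta_n^{2H+1})^{1/2}$, which for the Wasserstein metric bounds their $d_W$-distance directly (without passing through Lemma~\ref{dTV}, which would only give the weaker exponent $1/4$); the contribution of the quadratic Taylor remainder, of order $\sqrt{T_n}\,\Delta_n^{2H} = n^{1/2}\Delta_n^{2H+1/2}$, is dominated by it. This gives~\eqref{eq:bound_tilde_1}.
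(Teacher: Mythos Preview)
Your proposal is correct and follows essentially the same route as the paper: invoke the general Theorems~\ref{CLT-hate-NONSTAT-FOU} and~\ref{CLT-tilde-NONSTAT-FOU}, feed in the covariance decay $|\rho^\theta(t)|=\mathcal{O}(t^{2H-2})$ via Corollary~\ref{cor:sigmaz} with $\beta=H$, and reduce the control of $g_{Z^\theta}''(\zeta_T)$ to the negative-moment bound of Proposition~\ref{prop:bound_ft_p}. The one step you omit that the paper carries out explicitly is the verification of the Karhunen hypothesis~\eqref{cond-karhunen} for $Z^\theta$, which is required to invoke Proposition~\ref{prop:bound_ft_p}; conversely, your observation that the discretization term carries exponent $1/2$ for $d_W$ (via $d_W(X,Y)\leq\|X-Y\|_{L^2}$, bypassing Lemma~\ref{dTV}) is a point the paper's own proof leaves implicit.
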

\begin{proof}

Recall that $\E[Z_0 Z_r] = \mathcal{O}(r^{2H -2})$ and $H \in (0, 3/4)$, see~\cite[Proposition 4.2]{DEV}. Thus we are in the setting of Corollary~\ref{cor:sigmaz}. Then $\varphi_T(Z)$ and $\varphi_{T_n}(Z)$ are respectively bounded via~\eqref{eq:explicit_sz} with $\beta = H$.

To establish~\eqref{eq:bound_hat_1} and~\eqref{eq:bound_tilde_1} it is left to show that $\E |g''(\zeta_T)|^p < \infty$ for some $p \geq 1$. Using the monotonocity of $g''$ and the fact that $\zeta_T \in [|\widehat{f}_T(X^{\theta}), f_{X^{\theta}}|]$, it is enough to show that $\E |\widehat{f}_T(X^{\theta})|^p < \infty$ for some $p \geq 1$. This follows as an application of the technical Proposition~\ref{prop:bound_ft_p} in  Section~\ref{sec:bound_ft_p}. Indeed, since $X_t^\theta = Z_t^\theta - e^{\theta t}Z_0^\theta$ for a centered stationary Gaussian process $(Z_t^\theta)_{t \geq 0}$,  one needs only to check that $Z^\theta$ satisfies the condition~\eqref{cond-karhunen}, i.e,
\begin{align}
\label{eq-check}  \lim_{t\rightarrow0}  \E[(Z_t^\theta - Z_0^\theta)^2] = 0,\quad \mbox{ and }   \bigcap_{t\in\R}\overline{sp}\{Z_s^\theta:-\infty<s\leq t\} = \{0\},
\end{align}
where $\overline{sp}$ denotes the $L^2$-closure of the linear span of a set of square-integrable random variables. Note, that by~\eqref{Ztheta}, and integration by parts,
\begin{align}
\notag \E[(Z_t^\theta - Z_0^\theta)^2]  = \E \int_0^t \int_0^t B_s^H B_r^H e^{\theta(s +r)} ds dr =  \int_0^t \int_0^t \frac{\theta^2}{2} e^{\theta (s + r) } (r^{2H} + s^{2H} - |r - s|^{2H} ds dr,
\end{align}
which in turn approaches $0$ as $t \to 0$ and the first part of~\eqref{eq-check} is satisfied. For the second part, note that $\{Z_s^\theta: - \infty < s \leq t \} \subset \sigma( B_s^H : - \infty < s \leq t)$ and the intersection of the sigma algebras is empty.
\end{proof}

\subsection{ Fractional Ornstein-Uhlenbeck process of the second kind\label{FOUSKsection}}

The last example we consider is the so-called fractional Ornstein-Uhlenbeck process of the second kind, defined via the stochastic differential equation
\begin{equation}
S_{0}^{\mu }=0,\mbox{ and }\ dS_{t}^{\mu }=-\mu S_{t}^{\mu }dt+dY_{t}^{(1)},\quad t\geq 0, \label{FOUSK}
\end{equation}%
where $Y_{t}^{(1)}=\int_{0}^{t}e^{-s}dB^H_{a_{s}}$ with $a_{s}=He^{\frac{s}{H}} $ and $\left\{ B^H_{t},t\geq 0\right\} $ is a fractional Brownian motion with Hurst parameter $H\in \left(\frac{1}{2},1\right)$,  and where $\mu >0$ is the unknown real parameter which we would like to estimate. The equation (\ref{FOUSK}) admits an explicit solution, see~\cite[Equation 3.9]{KS}:
\begin{equation*}
S_{t}^{\mu }=e^{-\mu t}\int_{0}^{t}e^{\mu s}dY_{s}^{(1)}=e^{-\mu t}\int_{0}^{t}e^{(\mu -1)s}dB^H_{a_{s}}=H^{(1-\mu )H}e^{-\mu t}\int_{a_{0}}^{a_{t}}r^{(\mu -1)H}dB^H_{r}.
\end{equation*}
Hence we can also write
\begin{equation*}
S_{t}^{\mu }=Z_{t}^{\mu }-e^{-\mu t}Z_{0}^{\mu },
\end{equation*}
where
\begin{equation*}
Z_{t}^{\mu }=e^{-\mu t}\int_{-\infty }^{t}e^{(\mu -1)s}dB^H_{a_{s}}=H^{(1-\mu )H}e^{-\mu t}\int_{0}^{a_{t}}r^{(\mu -1)H}d B^H_{r}.
\end{equation*}
From~\cite[Lemma 37]{EV}, for every $H\in (\frac{1}{2},1)$,
\begin{equation*}
g_{Z^{\mu}}^{-1}(\mu) =f_{X^{\mu}}=f_{Z^{\mu}}=E\left[ \left( Z_{0}^{\mu }\right) ^{2}\right] =\frac{(2H-1)H^{2H}}{\mu} \mathcal{B} (1-H+\mu H,2H-1),
\end{equation*}%
where here $\mathcal{B}(\cdot)$ is the usual beta function. The function $\mu \mapsto g_{Z^{\mu}}^{-1}(\mu)$ is monotone (decreasing) and convex from $\R_{+}$ to $\R_{+}$. Now, the following Berry-Esseen result holds:
\begin{theorem} Assume $H\in(\frac12,1)$. Then
\begin{eqnarray*}
d_{W}\left(\frac{\sqrt{T}}{\sigma_{Z^{\mu}}g_{Z^{\mu}}'(f_{X^{\mu}})} (\widehat{\mu}_T-\mu), \mathcal{N}\right)  & \leq   C/\sqrt{T}.
\end{eqnarray*}
Also,
\begin{eqnarray*}
d_{W}\left(\frac{\sqrt{T_n}}{\sigma_{Z^{\mu}}g_{Z^{\mu}}'(f_{X^{\mu}})} (\widetilde{\mu}_n-\mu), \mathcal{N}\right)  & \leq  C \left[n\Delta_n^{2H+1}\right]^{1/2}+  C /\sqrt{n\Delta_n}.
\end{eqnarray*}
\end{theorem}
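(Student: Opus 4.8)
The plan is to derive both displayed bounds from the general asymptotically-stationary results established above --- Theorem~\ref{CLT-hate-NONSTAT-FOU} for $\widehat{\mu}_T$ and Theorem~\ref{CLT-tilde-NONSTAT-FOU} for $\widetilde{\mu}_n$ --- applied to the decomposition $X^{\mu}=S^{\mu}=Z^{\mu}+Y^{\mu}$ with $Y^{\mu}_t=-e^{-\mu t}Z^{\mu}_0$. First I would check the standing hypotheses. Since $\norm{Y^{\mu}_t}_{L^p}=\mathcal{O}(e^{-\mu t})=\mathcal{O}(t^{-\gamma})$ for every $p\geq1$ and every $\gamma>1$, condition~\eqref{hypothesis-on-Z+Y} holds with $\gamma$ free to be taken as large as we wish. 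The stationary process $Z^{\mu}$ is built from a fractional Brownian motion of index $H\in(\tfrac12,1)$ through a locally smooth time change, hence inherits its local regularity, $\E[|Z^{\mu}_t-Z^{\mu}_s|^2]\leq C|t-s|^{2H}$ for $|t-s|$ small (see~\cite{KS}); thus Assumption~\ref{assume:helix} holds with $\alpha=H$. Finally $g_{Z^{\mu}}$ is the inverse of $\mu\mapsto g_{Z^{\mu}}^{-1}(\mu)=\tfrac{(2H-1)H^{2H}}{\mu}\,\mathcal{B}(1-H+\mu H,2H-1)$, a strictly decreasing, convex, $C^{\infty}$ bijection of $\R^+$ onto $\R^+$ (the Beta factor being $C^{\infty}$ in $\mu>0$ because $1-H+\mu H>0$ and $2H-1>0$), so $g_{Z^{\mu}}$ is itself a twice continuously differentiable diffeomorphism of $\R^+$ and Assumption~\ref{assume:diffeo} holds. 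Hence Theorems~\ref{CLT-hate-NONSTAT-FOU} and~\ref{CLT-tilde-NONSTAT-FOU} apply with $Z=Z^{\mu}$, $\theta=\mu$, $\alpha=H$.

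The second step is to size $\varphi_{T}(Z^{\mu})$ (and $\varphi_{T_n}(Z^{\mu})$) from~\eqref{eq:varphi_bound}. The key input is that the autocovariance $\rho^{\mu}(r)=\E[Z^{\mu}_0 Z^{\mu}_r]$ of the stationary fractional Ornstein--Uhlenbeck process of the second kind decays, for every $H\in(\tfrac12,1)$, strictly faster than $r^{2H-2}$ --- this is exactly the feature distinguishing the second kind from the first and is recorded in~\cite{KS} (cf.\ also~\cite[Lemma 37]{EV} for $f_{Z^{\mu}}$). In particular $|\rho^{\mu}(r)|=\mathcal{O}(r^{2\beta-2})$ for some $\beta\in(0,\tfrac58)$, which already guarantees $\int_{\R}(\rho^{\mu})^2<\infty$, so Corollary~\ref{cor:explicit_bounds} gives $\varphi_T(Z^{\mu})=\mathcal{O}(T^{-1/2})$ and hence $\varphi_{T_n}(Z^{\mu})=\mathcal{O}((n\Delta_n)^{-1/2})$; Corollary~\ref{cor:sigmaz} moreover yields $|\E[V_T(Z^{\mu})^2]-\sigma_{Z^{\mu}}^2|=\mathcal{O}(T^{-1})$, needed for the final normalization. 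This is precisely where the uniformity of the rate $T^{-1/2}$ over all $H\in(\tfrac12,1)$ comes from, in contrast with the first-kind case of Theorem~\ref{thm:explicit_bounds_ou1} where the slower decay $r^{2H-2}$ forces a worse rate for $H>\tfrac58$.

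The third and most delicate ingredient is the moment control on $g''_{Z^{\mu}}$ required by Theorems~\ref{CLT-hate-NONSTAT-FOU} and~\ref{CLT-tilde-NONSTAT-FOU}: one needs $\E|g''_{Z^{\mu}}(\zeta_T)|^p$ (resp.\ $\E|g''_{Z^{\mu}}(\xi_n)|^p$) finite and bounded uniformly in $T$ (resp.\ in $n$) for some $p>1$, where $\zeta_T\in[|\widehat{f}_T(X^{\mu}),f_{X^{\mu}}|]$ and $\xi_n\in[|\widetilde{f}_n(X^{\mu}),f_{X^{\mu}}|]$. Since $f_{X^{\mu}}>0$ is a fixed constant, $g''_{Z^{\mu}}$ is continuous and bounded on $[f_{X^{\mu}},\infty)$, while near $0$ it behaves like a negative power of its argument (because $g_{Z^{\mu}}(x)\asymp x^{-1/2H}$ as $x\to0^+$) and is monotone there; hence $|g''_{Z^{\mu}}(\zeta_T)|\leq C\big(1+|g''_{Z^{\mu}}(\widehat{f}_T(X^{\mu}))|\big)$ and the problem reduces to a uniform-in-$T$ small-ball (negative-moment) estimate for $\widehat{f}_T(X^{\mu})=\tfrac1T\int_0^T(X^{\mu}_t)^2\,dt$ --- which is exactly what Proposition~\ref{prop:bound_ft_p} delivers, once its hypothesis~\eqref{cond-karhunen} is verified for $Z^{\mu}$. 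That verification is the same two-step check as in the first-kind case: $\lim_{t\to0}\E[(Z^{\mu}_t-Z^{\mu}_0)^2]=0$ is immediate from the explicit representation, while $\bigcap_{t\in\R}\overline{sp}\{Z^{\mu}_s:s\leq t\}=\{0\}$ follows because $Z^{\mu}_s$ lies in the $L^2$-closed span of $\{B^H_r:0\leq r\leq a_s\}$ with $a_s=He^{s/H}\downarrow0$ as $s\to-\infty$, and $\bigcap_{\varepsilon>0}\overline{sp}\{B^H_r:0\leq r\leq\varepsilon\}=\{0\}$ by continuity of fractional Brownian motion at $0$; the identical argument handles $\widetilde{f}_n(X^{\mu})$.

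Assembling: Theorem~\ref{CLT-tilde-NONSTAT-FOU} with $\alpha=H$ gives, for $\widetilde{\mu}_n$, a bound $\tfrac{C}{\sqrt{T_n}}+\varphi_{T_n}(Z^{\mu})+C[n\Delta_n^{2H+1}]^{1/4}$, which by Step~2 is $C/\sqrt{n\Delta_n}+C[n\Delta_n^{2H+1}]^{1/4}$, i.e.\ the second displayed inequality. For $\widehat{\mu}_T$, Theorem~\ref{CLT-hate-NONSTAT-FOU} together with Steps~2 and~3 bounds by $\mathcal{O}(T^{-1/2})$ the normalized variable centered at $\E[\widehat{\mu}_T-\mu]$ and scaled by $g'_{Z^{\mu}}(f_{X^{\mu}})\sqrt{Var(\widehat{f}_T(X^{\mu})-f_{X^{\mu}})}$; passing to the centering at $\mu$ and the scaling $\sigma_{Z^{\mu}}g'_{Z^{\mu}}(f_{X^{\mu}})/\sqrt{T}$ is a routine adjustment (a Wasserstein counterpart of Lemma~\ref{upper-d_TV-lemma}) costing $\mathcal{O}(|T\,Var(\widehat{f}_T(X^{\mu})-f_{X^{\mu}})-\sigma_{Z^{\mu}}^2|)=\mathcal{O}(T^{-1})$ plus $\mathcal{O}(\sqrt T\,|\E[\widehat{f}_T(X^{\mu})-f_{X^{\mu}}]|)=\mathcal{O}(T^{1/2-\gamma})$, both negligible for $\gamma$ large, giving the first displayed inequality. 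The main obstacle is Step~3: $g''_{Z^{\mu}}$ is genuinely unbounded at $0$ and $\widehat{f}_T(X^{\mu})$ can be arbitrarily small, so the argument really rests on the negative-moment bound of Proposition~\ref{prop:bound_ft_p} and on checking~\eqref{cond-karhunen}; pinning down in the literature the precise faster-than-$r^{2H-2}$ decay of $\rho^{\mu}$, which underlies the $H$-uniform rate, is the other point requiring care.
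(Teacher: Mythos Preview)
Your approach matches the paper's almost step for step: apply Theorems~\ref{CLT-hate-NONSTAT-FOU} and~\ref{CLT-tilde-NONSTAT-FOU} with $\alpha=H$, verify Assumption~\ref{assume:diffeo}, check the helix property, and control $\E|g''_{Z^{\mu}}(\zeta_T)|^p$ via Proposition~\ref{prop:bound_ft_p} after verifying~\eqref{cond-karhunen}. The paper cites \cite[Lemma~4]{BEV} for the local $2H$-H\"older regularity of the driving noise and \cite[Lemma~2.1]{AV} for $\E[(Z^{\mu}_t-Z^{\mu}_0)^2]\sim Ht^{2H}$ as $t\to0^+$; otherwise your verification of~\eqref{cond-karhunen} is the same.

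The one point you flagged but did not resolve is exactly where the paper is sharper: it invokes from~\cite{KS} that the autocovariance of $Z^{\mu}$ decays \emph{exponentially}, namely $\rho_{Z^{\mu}}(t)\le Ce^{-c|t|}$ for large $|t|$. Your inference ``decays strictly faster than $r^{2H-2}$, hence $|\rho^{\mu}(r)|=\mathcal{O}(r^{2\beta-2})$ for some $\beta<\tfrac58$'' is not valid as stated when $H>\tfrac58$ (strictly faster than $r^{2H-2}$ could still be slower than $r^{2\cdot\frac58-2}$). The exponential bound from~\cite{KS} removes the issue at once: all integrals $\int_{-T}^{T}|\rho_{Z^{\mu}}(t)|^{p}\,dt$ are bounded uniformly in $T$, so~\eqref{eq:varphi_bound} gives $\varphi_T(Z^{\mu})\le C/\sqrt{T}$ directly, and also yields the $\mathcal{O}(T^{-1})$ control on $|\E[V_T^2]-\sigma_{Z^{\mu}}^2|$ you need for the final rescaling. (Your observation that $[n\Delta_n^{2H+1}]^{1/4}$ is what Theorem~\ref{CLT-tilde-NONSTAT-FOU} literally gives, rather than the $1/2$ power in the displayed statement, is a genuine discrepancy in the exponents that the paper does not address; it is not a defect of your argument.)
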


\begin{proof} From~\cite{KS}, there exist $c,C>0$ such that for all large $|t|$,
\begin{equation*}
\rho_{Z^{\mu }}(t)= \E\left[ Z_{0}^{\mu }Z_{t}^{\mu }\right] \leq Ce^{-c|t|}.
\end{equation*}
Thus, by a straightforward calculation,
\begin{eqnarray*}
 \varphi_T(Z^{\mu}) & \leq   C/\sqrt{T}.
\end{eqnarray*}
Moreover, according to~\cite[Lemma 4]{BEV}, for all $H \in(0,1)$, and $|t-s|$ small enough, $\E\left|Y_{t}^{(1)}-Y_{s}^{(1)}\right|^{2}=C|t-s|^{2H}$ and we are in the setting of Corollary~\ref{cor:sigmaz}    .

Finally, one needs to bound $\E |g''(\zeta_T)|^p$. As in the proof of Theorem~\ref{thm:explicit_bounds_ou1} we use the technical proposition Proposition~\ref{prop:bound_ft_p} in  Section~\ref{sec:bound_ft_p}. Indeed, since $S_t^\mu = Z_t^\mu - e^{-\mu t}Z_0^\mu$ for a centered stationary Gaussian process $(Z_t^\mu)_{t \geq 0}$,  one needs only to check that $Z^\mu$ satisfies the condition~\eqref{cond-karhunen}, i.e,
\begin{align}
\label{eq-check}  \lim_{t\rightarrow0}  \E[(Z_t^\mu - Z_0^\mu)^2] = 0,\quad \mbox{ and }   \bigcap_{t\in\R}\overline{sp}\{Z_s^\mu:-\infty<s\leq t\} = \{0\},
\end{align}
where $\overline{sp}$ denotes the $L^2$-closure of the linear span of a set of square-integrable random variables. Recall~\cite[Lemma 2.1]{AV}. One has $\E[(Z_t^\mu - Z_0^\mu)^2]  \sim Ht^{2H}$ as $t \to 0^+$. Thus the first part of~\eqref{eq-check} is satisfied. For the second part, note that $\{Z_s^\theta: - \infty < s \leq t \} \subset \sigma( B_{He^{s/H}}^H : - \infty < s \leq t)$ and the intersection of the sigma algebras is empty.

\end{proof}

\section{Technical results}\label{sec:technical}

\subsection{Two technical lemmas}
\begin{proof}[Proof of Lemma~\ref{continuous-Borel-Cantelli} ]  We employ similar arguments to~\cite[Proposition 4.1]{NT}. First, write
\[\frac{1}{T}\int_0^T u_t dt=\frac{1}{\lfloor{T}\rfloor}\int_0^{\lfloor{T}\rfloor} u_t dt +\frac{1}{T}\int_{\lfloor{T}\rfloor}^T u_t dt+\left(\frac{1}{T}-\frac{1}{\lfloor{T}\rfloor}\right)\int_0^{\lfloor{T}\rfloor} u_t dt.\]
Notice that
\begin{align}
\notag\left|\left(\frac{1}{T}-\frac{1}{\lfloor{T}\rfloor}\right)\int_0^{\lfloor{T}\rfloor} u_t dt\right| = \left(1-\frac{\lfloor{T}\rfloor}{T} \right) \left|\frac{1}{\lfloor{T}\rfloor}\int_0^{\lfloor{T}\rfloor} u_t  dt\right| \leq  \left|\frac{1}{\lfloor{T}\rfloor}\int_0^{\lfloor{T}\rfloor} u_t dt\right|
\end{align}
We know that $\frac{1}{\lfloor T \rfloor} \int_0^{\lfloor T \rfloor } u_t dt \to 0$ almost surely as $T \to 0$. Thus, we are left to show that
\begin{align}
\label{eq:fractions_almsure} \left|\frac{1}{T}\int_{\lfloor{T}\rfloor}^T u_t dt\right| \to 0, \mbox{ almost surely as } T \to \infty.
\end{align}
Recall, that $\sup_{t \geq 0} \E |u_t|^p < C_p$ for any $p \geq 1$. Thus, by Minkowski's inequality
\begin{align}
\notag \left(\E\left| \frac{1}{\lfloor{T}\rfloor}\int_{\lfloor{T}\rfloor}^{\lfloor{T}\rfloor+1} \left|u_t\right| dt\right|^p\right)^{1/p}\leq \frac{1}{\lfloor T \rfloor} \int_{\lfloor{T}\rfloor}^{\lfloor{T}\rfloor+1} \left( \E |u_t|^p \right)^{1/p} dt  \leq  \frac{C_p^{1/p}}{\lfloor{T}\rfloor}.
\end{align}
Now, by Lemma~\ref{Borel-Cantelli},
\begin{align}
\notag \left|\frac{1}{\lfloor T \rfloor}\int_{\lfloor{T}\rfloor}^{\lfloor{T}\rfloor +  1} |u_t| dt\right| \to 0, \mbox{ almost surely as } T \to \infty,
\end{align}
and~\eqref{eq:fractions_almsure} follows.
\end{proof}
\medskip

\begin{proof}[Proof of Lemma~\ref{rate-cumulants}]
 According to~\eqref{chaos-V} and since $\E[V_T(Z) ] =0$,  we have
\begin{align}
\notag \kappa_3(V_T(Z)) = & \quad \E [ V_T(Z)^3] = \E \left[ \left( \frac{1}{\sqrt{T}} \int_0^T I_2(\stepid{t}^{\otimes 2}) \right)^3 \right] \\
\notag = & \quad \frac{1}{T^{3/2}} \int_{[0,T]^3} \E \left[ I_2(\stepid{r}^{\otimes 2}) I_2(\stepid{s}^{\otimes 2}) I_2(\stepid{t}^{\otimes 2})\right] dr ds dt.
\end{align}
Applying the product formula~\eqref{eq:product}
\begin{align}
\label{eq:prod2} I_2(\stepid{s}^{\otimes 2}) I_2(\stepid{t}^{\otimes 2}) = I_4(\stepid{s}^{\otimes 2} \widetilde{\otimes}_0 \stepid{t}^{\otimes 2}) + 4 I_2(\stepid{s}^{\otimes 2} \widetilde{\otimes}_1 \stepid{t}^{\otimes 2})  + I_0(\stepid{s}^{\otimes 2} \widetilde{\otimes}_2 \stepid{t}^{\otimes 2}).
\end{align}
Now, by isometry~\eqref{eq:isometry}
\begin{align}
\notag \E \left[ I_2(\stepid{r}^{\otimes 2}) I_2(\stepid{s}^{\otimes 2}) I_2(\stepid{t}^{\otimes 2})\right] = 4\E \left[ I_2(\stepid{r}^{\otimes 2}) I_2(\stepid{s}^{\otimes 2} \widetilde{\otimes}_1 \stepid{t}^{\otimes 2}) \right] = 8 \inner{\stepid{r}^{\otimes 2}}{(\stepid{s}^{\otimes 2} \widetilde{\otimes}_1 \stepid{t}^{\otimes 2}}_{\HC^{\otimes 2}}.
\end{align}
Recall the formula for contraction~\eqref{eq:contraction}. Then,
\begin{align}
\notag \stepid{s}^{\otimes 2} \otimes_1 \stepid{t}^{\otimes 2} =\inner{\stepid{s}}{\stepid{t}}_\HC \stepid{s} \otimes \stepid{t}.
\end{align}
The symmetrization is then
\begin{align}
\notag \stepid{s}^{\otimes 2} \widetilde{\otimes}_1 \stepid{t}^{\otimes 2} = \frac{1}{2} \inner{\stepid{s}}{\stepid{t}}_\HC \left( \stepid{s} \otimes \stepid{t} + \stepid{t} \otimes \stepid{s}\right).
\end{align}
Therefore,
\begin{align}
\notag \E \left[ I_2(\stepid{r}^{\otimes 2}) I_2(\stepid{s}^{\otimes 2}) I_2(\stepid{t}^{\otimes 2})\right]  = 8  \inner{\stepid{r}}{\stepid{s}}_\HC  \inner{\stepid{r}}{\stepid{t}}_\HC  \inner{\stepid{s}}{\stepid{t}}_\HC,
\end{align}
and then
\begin{align}
\notag \kappa_3(V_T(Z)) = & \quad \frac{8}{T^{3/2} } \int_{[0, T]^3} \inner{\stepid{r}}{\stepid{s}}_\HC  \inner{\stepid{r}}{\stepid{t}}_\HC  \inner{\stepid{s}}{\stepid{t}}_\HC dr ds dt \\
\notag = & \quad \frac{8}{T^{3/2} } \int_{[0, T]^3} \E[ Z_r Z_s] \E [Z_r Z_t] \E [ Z_s Z_t] dr ds dt \\
\notag = &  \quad \frac{8}{T^{3/2}} \int_{[0,T]^3} \rho(r-s) \rho(r-t) \rho(s-t) dr ds dt \\
\notag =& \quad \frac{8}{T^{3/2}} \int_{0}^{T} \int_{-t}^{T-t} \int_{-t}^{T-t} \rho(x-y)\rho(x)\rho(y) dx dy dt \\
\notag = & \quad \frac{8}{T^{3/2}} \int_{-T}^{T} \int_{-T}^{T} \rho(x-y) \rho(x) \rho(y) dx dy \int_{(-x)\vee(-y)\vee0}^{(T-x)\wedge(T-y)\wedge T}dt
\end{align}
Let $\rho_T(x) \coloneqq |\rho(x) | \mathbf{1}_{|x| \leq T}$. Then,
\begin{align}
\notag \kappa_3(V_T(Z)) \leq & \quad \frac{8}{\sqrt{T}} \int_\R \int_\R \rho_T(x-y) \rho(x) \rho(y) dx dy \\
\notag = & \quad \frac{8}{\sqrt{T}} \int_\R (\rho_T * \rho_T) (y) \rho_T(y) dy \\
\notag \leq &\quad  \norm{\rho_T * \rho_T}_{L^3(\R)} \norm{\rho_T}_{L^{3/2}(\R)},
\end{align}
where $\rho_T * \rho_T$ is the convolution of the two functions and we have applied H\"older's inequality in the last line. Now, recall Young's inequality: If $p, q, r \geq 1$, $f \in L^p(\R)$, $g \in L^q(\R)$ and $1/p + 1/q = 1/r + 1$, then
\begin{align}
\notag \norm{f * g}_{L^r(\R)} \leq \norm{f}_{L^p(\R)} \norm{g}_{L^q(\R)}.
\end{align}
 Hence,
\begin{align}
\notag \kappa_3(V_T(Z)) \leq \norm{\rho_T}_{L^{3/2}(\R)}^3 = \left( \int_{-T}^T |\rho(T)|^{3/2} \right)^2,
\end{align}
and~\eqref{third-cumulant} is established.

Similarly, we have
\begin{align}
\notag \kappa_4(V_T(Z))  = &  \E [ V_T(Z)^4]  - 3 \E[V_T(Z)^2]^2 = \frac{1}{T^2}\E \left[ \left( \int_0^T I_2(\stepid{t}^{\otimes 2}) \right)^4 \right] - \frac{3}{T^2} \E \left[ \left(  \int_0^T I_2(\stepid{t}^{\otimes 2}) \right)^2 \right]^2\\
\notag = & \quad \frac{1}{T^2} \int_{[0,T]^4} \E \left[  I_2(\stepid{s}^{\otimes 2}) I_2(\stepid{t}^{\otimes 2})I_2(\stepid{u}^{\otimes 2}) I_2(\stepid{v}^{\otimes 2})\right] ds dt du dv \\
\notag & - \frac{3}{T^2} \left( \int_{[0, T]^2} \E \left[
I_2(\stepid{t}^{\otimes 2}) I_2(\stepid{s}^{\otimes 2})\right] dt ds
\right)^2.
\end{align}
By~\eqref{eq:prod2} and the isometry property
\begin{align}
\notag \E  & \left[ I_2(\stepid{s}^{\otimes 2}) I_2(\stepid{t}^{\otimes 2})I_2(\stepid{s}^{\otimes 2}) I_2(\stepid{t}^{\otimes 2})\right]  \\
\notag = \quad &   \E \left[  I_4(\stepid{s}^{\otimes 2} \widetilde{\otimes}_0 \stepid{t}^{\otimes 2}) I_4(\stepid{u}^{\otimes 2} \widetilde{\otimes}_0 \stepid{v}^{\otimes 2}) \right]   + 16 \E \left[ I_2(\stepid{s}^{\otimes 2} \widetilde{\otimes}_1 \stepid{t}^{\otimes 2})  I_2(\stepid{u}^{\otimes 2} \widetilde{\otimes}_1 \stepid{v}^{\otimes 2}) \right]  \\
\notag  &  + \E \left[  I_0(\stepid{s}^{\otimes 2} \widetilde{\otimes}_2 \stepid{t}^{\otimes 2})
  I_0(\stepid{u}^{\otimes 2} \widetilde{\otimes}_2 \stepid{v}^{\otimes 2})
  \right].
\end{align}
For the contractions the following holds:
\begin{align}
\notag \stepid{s}^{\otimes 2} \otimes_0 \stepid{t}^{\otimes 2} =&  \stepid{s} \otimes  \stepid{s} \otimes \stepid{t} \otimes \stepid{t},\\
\notag \stepid{s}^{\otimes 2} \otimes_1 \stepid{t}^{\otimes 2} =& \inner{\stepid{s}}{\stepid{t}}_\HC \stepid{s} \otimes \stepid{t},\\
\notag \stepid{s}^{\otimes 2} \otimes_2 \stepid{t}^{\otimes 2} =& \inner{\stepid{s}}{\stepid{t}}_\HC \inner{\stepid{s}}{\stepid{t}}_\HC.
\end{align}
After taking symmetrization into account and by the isometry formula~\eqref{eq:isometry},
\begin{align}
\notag \E \left[  I_4(\stepid{s}^{\otimes 2} \widetilde{\otimes}_0 \stepid{t}^{\otimes 2}) I_4(\stepid{u}^{\otimes 2} \widetilde{\otimes}_0 \stepid{v}^{\otimes 2}) \right]  =  &  4 \left(\E[Z_s Z_u] \E [Z_t Z_v]\right)^2  + 4 \left(\E[Z_s Z_u] \E [Z_t Z_v]\right)^2\\
\notag & + 16 \E[Z_s Z_u] \E[Z_s Z_v] \E[Z_t Z_u] \E [Z_t Z_v], \\
\notag \E \left[ I_2(\stepid{s}^{\otimes 2} \widetilde{\otimes}_1 \stepid{t}^{\otimes 2})  I_2(\stepid{u}^{\otimes 2} \widetilde{\otimes}_1 \stepid{v}^{\otimes 2}) \right]  = &  \E[ Z_s Z_t] \E[Z_u Z_v] \E[Z_s Z_u] \E[Z_t Z_v] \\
\notag & + \E [Z_s Z_t] \E [Z_u Z_v] \E [Z_s Z_v] \E[Z_t Z_u], \\
\notag \E\left[  I_0(\stepid{s}^{\otimes 2} \widetilde{\otimes}_2 \stepid{t}^{\otimes 2})  I_0(\stepid{u}^{\otimes 2} \widetilde{\otimes}_2 \stepid{v}^{\otimes 2}) \right] = & (\E[Z_s Z_t] \E[Z_u Z_v])^2.
\end{align}

Therefore by symmetry and~\eqref{eq:second_exp},
\begin{align}
\notag \kappa_4(V_T(Z)) = & \quad \frac{48}{T^2}  \int_{[0, T]^4} \E[ Z_s Z_u] \E [Z_s Z_v] \E[ Z_t Z_u] \E [Z_t Z_v]  ds dt du dv \\
\notag & + \frac{4 + 4 + 1 - 3*4}{T^2} \int_{[0, T]^4} \E[ Z_s Z_t]^2 \E [Z_u Z_v]^2  ds dt du dv \\
\notag \leq  &\quad  \frac{48}{T^2}  \int_{[0, T]^4}   \rho(s-u) \rho(s-v) \rho(t-u) \rho(t-v) ds dt du dv \\
\notag = & \quad \frac{48}{T^2}  \int_{[0, T]^2} \int_{\R^2} \rho_T(u-s) \rho_T(v-s) \rho_T(t-u) \rho_T(t-v) du dv  ds dt \\
\notag = & \quad  \frac{48}{T^2}  \int_{[0, T]^2} \int_{\R^2} \rho_T (t-s - x) \rho_T(t-s-y) \rho_T(x) \rho_T(y) dx dy ds dt\\
\notag = & \quad \frac{48}{T^2}  \int_{[0, T]^2} ((\rho_T * \rho_T)(t-s) )^2 ds dt\\
\notag = & \quad \frac{48}{T^2} \norm{\rho_T * \rho_T}_{L^2(\R)}^2,
\end{align}
where we have also applied the change of variables $x = t-u$, $y = t-v$ and used the formula for convolution. Next, by  Young's inequality,
\begin{align}
\notag \norm{\rho_T * \rho_T}_{L^2(\R)}^2 \leq  \norm{\rho_T}_{L^{4/3}(\R)}^3,
\end{align}
and this establishes~\eqref{fourth-cumulant}

\end{proof}

\subsection{A bound on $\E|\widehat{f}_T(X)|^p$}\label{sec:bound_ft_p}

First, we recall an important representation for stationary Gaussian processes.
\begin{definition}
Let $\phi_S$ denote the set of functions $\xi\in L^2(\R)$ such that $\xi(t) = 0$ for all $t <0$.  If $\xi \in \phi_S$, we can define for all $t \in \R$,
\begin{align}
\label{rep-integ} Z_t^\xi \coloneqq \int_\R \xi(t - u) dW_u,
\end{align}
where $(W_t)_{t \geq 0}$ is the Wiener process. The process $(Z_t^\xi)_{t \in \R}$ is a stationary centered Gaussian process.
\end{definition}

The following result is key for our approach.
\begin{theorem}[Karhunen \cite{karhunen}]\label{thm-karhunen}
Let $\{Z_t,t\in\R\}$ be a stationary centered Gaussian process such that
\begin{align}
 \lim_{t\rightarrow0}  \E[(Z_t - Z_0)^2] = 0,\quad  \bigcap_{t\in\R}\overline{sp}\{Z_s:-\infty<s\leq t\} = \{0\},\label{cond-karhunen}
 \end{align}
where $\overline{sp}$ denotes the $L^2$-closure of the linear span of a set of square-integrable random variables.

Then  there exists a $\xi\in \phi_S$  such that
\begin{align}
\notag \{ Z_t, t \in \R \} \overset{(d)}{=} \{ Z_t^{\xi}, t \in \R \},
 \end{align}
where $(d)$ denotes equality of all finite-dimensional distributions, and   $\{Z_t^{\xi}, t \in \R \}$ is a stationary centered Gaussian process defined as in~\eqref{rep-integ}.
\end{theorem}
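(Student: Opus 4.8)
This is the classical Karhunen representation theorem, so the plan is to recall a proof via continuous-time prediction theory rather than to invent anything new; in the paper one would simply cite~\cite{karhunen}, but the route is as follows. Let $\mathcal{G} := \overline{sp}\{Z_t : t \in \R\} \subseteq L^2(\Omega)$ be the Gaussian space generated by $Z$, and for $t \in \R$ let $\mathcal{G}_t := \overline{sp}\{Z_s : s \leq t\}$ be the ``past up to time $t$''. The first condition in~\eqref{cond-karhunen}, combined with stationarity, makes $t \mapsto Z_t$ continuous from $\R$ into $L^2(\Omega)$ (hence $r(\tau) := \E[Z_0 Z_\tau]$ is continuous), so that $(\mathcal{G}_t)_{t\in\R}$ is an increasing family of closed subspaces with $\overline{\bigcup_t \mathcal{G}_t} = \mathcal{G}$, while the second condition says precisely that $Z$ is \emph{purely non-deterministic}, i.e.\ $\bigcap_t \mathcal{G}_t = \{0\}$. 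Stationarity also gives a strongly continuous one-parameter unitary group $U_s$ on $\mathcal{G}$ with $U_s Z_t = Z_{t+s}$ and $U_s \mathcal{G}_t = \mathcal{G}_{t+s}$, which is what will force the moving-average kernel to be of convolution type.

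First I would pass to the spectral domain. Since $r$ is continuous and positive-definite, Bochner's theorem gives $r(\tau) = \int_\R e^{i\tau\lambda}\,F(d\lambda)$ for a finite Borel measure $F$, and $Z_t$ corresponds isometrically to the exponential $e_t(\lambda) := e^{it\lambda}$ inside $L^2(\R, F)$. The crucial classical input (Kolmogorov--Krein) is that the purely-non-deterministic hypothesis $\bigcap_t \mathcal{G}_t = \{0\}$ is equivalent to: $F$ is absolutely continuous, $F(d\lambda) = f(\lambda)\,d\lambda$, and the Szeg\H{o}-type logarithmic condition $\int_\R \frac{|\log f(\lambda)|}{1+\lambda^2}\,d\lambda < \infty$ holds. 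Under this condition one constructs the outer spectral factor: there is an outer function $\Phi$ in the Hardy space $H^2$ of a half-plane whose nontangential boundary values satisfy $|\Phi(\lambda)|^2 = f(\lambda)$ for almost every $\lambda$. By the Paley--Wiener theorem, $\Phi$ is the Fourier transform of a function $\xi \in L^2(\R)$ with $\xi(t) = 0$ for $t < 0$, i.e.\ $\xi \in \phi_S$.

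Then I would recover the driving Wiener process. Because $\Phi$ is outer it is nonzero a.e., so multiplication by $\Phi$ is a unitary isomorphism $L^2(\R, f\,d\lambda) \to L^2(\R, d\lambda)$ carrying $e_t$ onto $e^{it\lambda}\Phi(\lambda)$ and carrying the spectral copy $\overline{sp}\{e^{is\lambda} : s \leq t\}$ of $\mathcal{G}_t$ onto $e^{it\lambda}H^2$ (here outerness is exactly what kills the inner factor). Pulling this isomorphism back to $L^2(\Omega)$ produces a centered Gaussian process $(W_t)_{t\in\R}$ with stationary independent increments -- a two-sided Wiener process after normalization -- such that $\mathcal{G}_t = \overline{sp}\{W_s - W_u : u \leq s \leq t\}$ and, by Parseval's identity applied to $\Phi = \widehat{\xi}$, $Z_t = \int_\R e^{it\lambda}\Phi(\lambda)\,\widehat{dW}(\lambda) = \int_\R \xi(t-u)\,dW_u = Z_t^\xi$. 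Since $Z$ and $Z^\xi$ are both centered Gaussian, equality of all finite-dimensional distributions reduces to the single identity $r(t-s) = \int_\R \xi(t-u)\xi(s-u)\,du$, which is immediate from $\widehat{\xi} = \Phi$ and $|\Phi|^2 = f$ via Parseval.

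The main obstacle is the middle step: proving the equivalence of the abstract purely-non-deterministic condition with absolute continuity of $F$ plus the logarithmic integrability of $f$, and then building the outer factor $\Phi \in H^2$ with $|\Phi|^2 = f$ and controlling its boundary behavior. This is the heart of Kolmogorov--Krein--Wiener prediction theory; the delicate points are the absolute-continuity step, the handling of outer functions (in particular the fact that $1/\Phi$ need not lie in $H^2$ even though the surrounding manipulations still close up in $L^2$), and the translation of ``inner part trivial'' into ``span of the past is all of $H^2$''. Once $\xi$ is in hand, recovering $W$ and matching covariances are routine, so I would present those quickly and spend essentially all the effort on the spectral factorization -- or, as the paper does, simply invoke~\cite{karhunen}.
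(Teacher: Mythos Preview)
The paper does not prove this theorem at all: it is stated as a classical result and simply attributed to Karhunen~\cite{karhunen}, exactly as you anticipate in both your opening and closing sentences. Your sketch via Bochner's theorem, the Kolmogorov--Krein--Szeg\H{o} criterion, outer spectral factorization in $H^2$, and Paley--Wiener is the standard route to this representation and is correct as an outline; there is nothing in the paper to compare it against beyond the bare citation.
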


Now, our goal is to show the following:

\begin{proposition}\label{prop:bound_ft_p}
Let $\{Z_t, t\geq0\}$ be a stationary centered Gaussian process  satisfying~\eqref{cond-karhunen}, and $\E[Z_0^2]=1$.  Then, for every $p>0$, there is $T_0>0$ such that
\begin{align}
\label{average-cont} \sup_{T\geq T_0} \E\left[\left(\frac{1}{T}\int_0^T
 Z_t^2dt\right)^{-p}\right]<\infty, \mbox{ and } \sup_{T\geq T_0} \E\left[\left(\frac{1}{T}\int_0^T(Z_t-e^{\theta t
 }Z_0)^2dt\right)^{-p}\right]<\infty.
\end{align}
Moreover,  for every $p>0$, there is $n_0 \geq 1$ such that
\begin{eqnarray}
\sup_{n\geq n_0} \E\left[\left(\frac{1}{n} \sum_{i =1}^{n} Z_{t_{i}}^{2}\right)^{-p}\right]<\infty, \mbox{ and } \sup_{n\geq n_0} \E\left[\left(\frac{1}{n} \sum_{i =1}^{n} (Z_{t_{i}}-e^{\theta t_{i} }Z_0)^{2}\right)^{-p}\right]<\infty, \label{average-disc}
\end{eqnarray}
where $t_i = i \Delta_n$, for $i = 1, \ldots, n$, with $n \Delta_n \to \infty$ and $\Delta_n \to 0$, as $n \to \infty$.
\end{proposition}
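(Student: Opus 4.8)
The plan is to bound $\tfrac1T\int_0^T Z_t^2\,dt$ from below by a fixed multiple of a chi-squared variable whose number of degrees of freedom grows linearly in $T$, and then to use the elementary identity $\E[(\chi^2_m)^{-p}]=2^{-p}\,\Gamma(m/2-p)/\Gamma(m/2)$, which is $\le C_p\,m^{-p}$ once $m\ge m_p>2p$. The first step is to apply Karhunen's Theorem~\ref{thm-karhunen} to write $Z_t=\int_{\R}\xi(t-u)\,dW_u$ with $\xi\in\phi_S$ and $\|\xi\|_{L^2(\R)}^2=\E[Z_0^2]=1$; then $Z$ is purely non-deterministic, $\rho$ is (up to reflection) the autocorrelation of $\xi$, hence $\rho$ is continuous with $\rho(u)\to 0$ as $|u|\to\infty$, and $Z$ has a spectral density $f\ge 0$ with $\int_{\R}f=\rho(0)=1$.

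Fix a small $h>0$ and a large $\tau>2h$ (to be chosen below), set $c_j=j\tau$ and $W_j=[c_j-h,c_j+h]$. For $T\ge T_0$ the windows $W_1,\dots,W_m$ with $m=m(T):=\lfloor (T-h)/\tau\rfloor$ are pairwise disjoint and contained in $[0,T]$, and $m(T)\sim T/\tau\to\infty$. Define the smoothed samples $\bar Z_j:=\tfrac1{2h}\int_{W_j}Z_t\,dt$. Cauchy--Schwarz gives $2h\,\bar Z_j^2\le\int_{W_j}Z_t^2\,dt$, hence $\tfrac1T\int_0^T Z_t^2\,dt\ge\tfrac{2h}{T}\sum_{j=1}^m\bar Z_j^2$. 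Since $(\bar Z_1,\dots,\bar Z_m)$ is a finite block of a stationary centered Gaussian sequence, writing $\Sigma^{(m)}$ for its covariance matrix we have $(\bar Z_1,\dots,\bar Z_m)\overset{d}{=}(\Sigma^{(m)})^{1/2}N$ with $N\sim\mathcal N(0,I_m)$, so $\sum_{j=1}^m\bar Z_j^2=N^{\mathsf T}\Sigma^{(m)}N\ge\lambda_{\min}(\Sigma^{(m)})\,\chi^2_m$, with $\chi^2_m=|N|^2$.

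The heart of the argument --- and what I expect to be the main obstacle --- is the claim that $\lambda_{\min}(\Sigma^{(m)})\ge c$ for some $c=c(h,\tau)>0$ \emph{uniformly in} $m$. Writing $\bar Z_j=\int_{\R}\psi(t-c_j)Z_t\,dt$ with $\psi=\tfrac1{2h}\mathbf 1_{[-h,h]}$ and using the spectral representation of $Z$, one finds for every $a\in\R^m$
\[
a^{\mathsf T}\Sigma^{(m)}a=\int_{\R}\Big|\sum_j a_j e^{ij\tau\lambda}\Big|^2\,|\widehat\psi(\lambda)|^2 f(\lambda)\,d\lambda=\frac1\tau\int_{-\pi}^{\pi}\Big|\sum_j a_j e^{ij\theta}\Big|^2 G_\tau(\theta)\,d\theta ,
\]
where $G_\tau(\theta)=\sum_{k\in\mathbb Z}|\widehat\psi(\tfrac{\theta+2\pi k}{\tau})|^2 f(\tfrac{\theta+2\pi k}{\tau})\ge 0$ is the folded spectral symbol; since $\tfrac1{2\pi}\int_{-\pi}^{\pi}|\sum_j a_j e^{ij\theta}|^2\,d\theta=|a|^2$, this gives $\lambda_{\min}(\Sigma^{(m)})\ge\tfrac{2\pi}{\tau}\operatorname*{ess\,inf}_{\theta}G_\tau(\theta)$ for \emph{all} $m$. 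One then chooses $h$ so small that $|\widehat\psi(\lambda)|^2=\operatorname{sinc}^2(h\lambda)$ is bounded away from $0$ on a band $[a,b]\subset(0,\infty)$ on which $f$ is bounded below, and $\tau$ so large that every residue class $\{\tfrac{\theta+2\pi k}{\tau}:k\in\mathbb Z\}$ meets $[a,b]$; this forces $\operatorname*{ess\,inf}_{\theta}G_\tau(\theta)>0$. The existence of such a band is where the content of \eqref{cond-karhunen} --- the existence of the spectral density of the purely non-deterministic process $Z$, with $\int f=1$ --- is used; it is explicit in every example of Section~\ref{sec:OU} (there $\rho$ decays at least polynomially, so $f$ is continuous and positive away from the origin), and establishing it for a general $f\ge0$ with $\int f=1$ is the delicate point.

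Granting this, $\tfrac1T\int_0^T Z_t^2\,dt\ge\tfrac{2hc}{T}\,\chi^2_{m(T)}$, so for $T\ge T_0$ (so that $m(T)\ge m_p$)
\[
\E\Big[\big(\tfrac1T\textstyle\int_0^T Z_t^2\,dt\big)^{-p}\Big]\le\Big(\tfrac{T}{2hc}\Big)^{p}\E\big[(\chi^2_{m(T)})^{-p}\big]\le C_p\Big(\tfrac{T}{2hc}\Big)^{p}m(T)^{-p}\le C_p\Big(\tfrac{\tau}{hc}\Big)^{p},
\]
uniformly in $T\ge T_0$; the cancellation of $T^p$ is precisely why $m(T)$ must be taken $\asymp T$. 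For the shifted process $Z_t-e^{-\theta t}Z_0$ the smoothed samples become $\bar Z_j-\epsilon_j Z_0$ with $\epsilon_j=\tfrac1{2h}\int_{W_j}e^{-\theta t}\,dt\le e^{-\theta(c_j-h)}$; the covariance matrix of $(\bar Z_j-\epsilon_j Z_0)_{j_0\le j\le m}$ differs from the corresponding block of $\Sigma^{(m)}$ by a matrix of rank at most $2$ whose operator norm tends to $0$ as $j_0\to\infty$ (because $\sum_j\epsilon_j^2$ converges geometrically while the covariances $\mathrm{Cov}(Z_0,\bar Z_j)$ stay bounded), so by Weyl's inequality the uniform lower bound on $\lambda_{\min}$ persists after discarding finitely many initial windows, and the estimate above goes through. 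Finally, for the discrete statistics one repeats the construction with $\bar Z_j$ replaced by the grid average $\tfrac1{\#_j}\sum_{t_i\in W_j}Z_{t_i}$, where $\#_j=\#\{i:t_i\in W_j\}\sim 2h/\Delta_n$: since $\Delta_n\to 0$ and $\rho$ is continuous, the covariance matrix of these grid averages converges to $\Sigma^{(m)}$, hence $\lambda_{\min}$ stays bounded below for $n$ large; Cauchy--Schwarz gives $\tfrac1n\sum_{i=1}^n Z_{t_i}^2\ge\tfrac{2h}{T_n}(1+o(1))\sum_{j=1}^m\big(\tfrac1{\#_j}\sum_{t_i\in W_j}Z_{t_i}\big)^2$ with $m\sim T_n/\tau\to\infty$, and the same chi-squared estimate yields \eqref{average-disc}, the shift $e^{-\theta t_i}Z_0$ being handled as in the continuous case.
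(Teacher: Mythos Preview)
Your approach is quite different from the paper's, and the step you yourself flag as ``the delicate point'' is a genuine gap, not just a technicality. Condition~\eqref{cond-karhunen} does give you a spectral density $f=|\widehat\xi|^2\in L^1(\R)$, and the Paley--Wiener/Szeg\H{o} criterion underlying pure non-determinism even forces $f>0$ almost everywhere; but it does \emph{not} guarantee that $f$ is bounded away from zero on any band. One can build $f>0$ a.e.\ with $\int\tfrac{\log f(\lambda)}{1+\lambda^2}\,d\lambda>-\infty$ and yet $\operatorname{ess\,inf}_I f=0$ for every interval $I$ (let $f(\lambda)=\tfrac{C}{1+\lambda^2}e^{-\phi(\lambda)}$ where $\phi$ has increasingly tall but rapidly narrowing bumps centred at the rationals, tuned so that $\int\phi/(1+\lambda^2)<\infty$). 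For such $f$ your folded symbol satisfies $\operatorname{ess\,inf}_\theta G_\tau(\theta)=0$ for every choice of $h,\tau$, so the uniform lower bound $\lambda_{\min}(\Sigma^{(m)})\ge c>0$ fails and the chi-squared comparison breaks down. Your argument therefore proves the proposition only under the extra hypothesis that the spectral density is bounded below on some interval---true for all the examples in Section~\ref{sec:OU}, but not part of the stated assumptions.

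The paper avoids spectral considerations entirely by exploiting the \emph{causal} structure of the Karhunen representation. It splits $[0,T]$ into $m>2p$ blocks and uses the AM--GM inequality to turn the $-p$th moment of the sum into a product of $-p/m$th moments of the block integrals. Each factor is then bounded by conditioning on $\mathcal{F}_{(k-1)T/m}=\sigma(W_u:u\le(k-1)T/m)$ and applying the Carbery--Wright inequality for Gaussian polynomials, which reduces the problem to a \emph{lower} bound on the conditional second moment of the block integral. That lower bound comes for free from the independence of $Z_t^\xi-Z_a^\xi=\int_a^t\xi(t-u)\,dW_u$ from $\mathcal{F}_a$: one gets $\E[(Z_t^\xi)^2\mid\mathcal{F}_a]\ge\int_0^{t-a}\xi^2$, which exceeds $\tfrac12$ once $t-a$ is large, using only $\int_0^\infty\xi^2=1$. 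No property of $f$ beyond $\|\xi\|_{L^2}=1$ is needed, and the shifted and discrete variants follow by the same conditioning device rather than by a matrix perturbation argument.
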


\begin{remark} We note the main idea for the proof is inspired by the approach in~\cite[Theorem 1.1]{NN}, which provides a bound on the some negative moments of the Malliavin derivative.

\end{remark}

\begin{proof} Let $p > 0$. Condition~\eqref{cond-karhunen} is satisfied and thus Theorem~\ref{thm-karhunen} implies that $\{Z_t, t \in \R_+ \} \overset{(d)}{=} \{ Z_t^{\xi}, t \in \R_+ \}$ where $\xi \in L^2(\R)$ with $\xi(t) = 0$, for $t \leq 0$, and $Z_t^\xi$ is given by~\eqref{rep-integ}. Then, for a positive integer $m > 2p$,
\begin{align}
\notag \E \left[ \left( \frac{1}{T} \int_0^T Z_t^2 dt \right)^{-p} \right]  =  \E \left[ \left( \frac{1}{T}\int_0^T(Z_t^{\xi})^2dt\right)^{-p}\right] = \E \left[ \left( \sum_{k =1}^{m} \frac{1}{T} \int_{(k-1)T/m}^{kT/m}(Z_t^{\xi})^2dt\right)^{-p} \right].
\end{align}
Recall, that by the inequality between arithmetic and geometric means, for any positive reals $x_1, \ldots, x_m$, $\sum_{k = 1}^m x_k \geq m \prod_{k=1}^m x_k^{1/m}$. Then,
\begin{align}
\label{ineq-0} \E \left[ \left( \frac{1}{T} \int_0^T Z_t^2 dt \right)^{-p} \right]  \leq  m^{-p} \E \left[ \prod_{k =1}^{m}\left(\frac{1}{T}\int_{(k-1)T/m}^{kT/m}(Z_t^{\xi})^2dt\right)^{-p/m}\right].
\end{align}
We proceed by conditioning. Let $\mathcal{F}_t \coloneqq \sigma(W_u, u \leq t)$. By definition, $Z_t^\xi$ is $\mathcal{F}_t-$measurable. Thus,
\begin{align}
\notag \E  \left[ \left( \frac{1}{T} \int_0^T Z_t^2 dt \right)^{-p} \right]  \leq   m^{-p} \E \left[ \prod_{k =1}^{m} \E \left[ \left(\frac{1}{T}\int_{(k-1)T/m}^{kT/m}(Z_t^{\xi})^2dt\right)^{-p/m}\Big| \mathcal{F}_{(k-1)T/m} \right] \right].
\end{align}
Next, note that
\begin{align}
 & \E \left[ \left( \frac{1}{T} \int_{(k-1)T/m}^{kT/m}(Z_t^{\xi})^2 dt \right)^{-p/m} \Big| \mathcal{F}^W_{(k-1)T/m} \right] \nonumber\\
= & \quad  \int_0^{\infty} \mathbb{P} \left(\frac{1}{T}\int_{(k-1)T/m}^{kT/m}(Z_t^{\xi})^2dt\leq x^{-m/p} \Big| \mathcal{F}^W_{(k-1)T/m}\right)dx\nonumber\\
\leq & \quad 1+\int_1^{\infty} \mathbb{P} \left(\frac{1}{T}\int_{(k-1)T/m}^{kT/m}(Z_t^{\xi})^2dt\leq x^{-m/p}\Big| \mathcal{F}^W_{(k-1)T/m} \right) dx.\label{ineq-1}
\end{align}
Applying the Carbery-Wright Inequality~\cite{CW}, there is a universal constant $c> 0$ such that, for any $\varepsilon>0$ we can write
\begin{align}
\mathbb{P}\left( \frac{1}{T} \int_{(k-1)T/m}^{kT/m} (Z_t^{\xi})^2 dt \leq \varepsilon \Big| \mathcal{F}^W_{(k-1)T/m}\right) \leq \frac{c\sqrt{\varepsilon}}{\E \left[\frac{1}{T}\int_{(k-1)T/m}^{kT/m}(Z_t^{\xi})^2dt \mid\mathcal{F}^W_{(k-1)T/m} \right]}.\label{ineq-2}
\end{align}
Next, note that for any $0 \leq a < b$,
\begin{align}
\notag \E  \left[  \int_a^b (Z_t^\xi)^2 dt \Big| \mathcal{F}_a \right] =  &\int_a^b \E \left[ (Z_t^\xi)^2  | \mathcal{F}_a \right] dt \geq \int_a^b 2 Z_a^\xi \E[ Z_t^\xi - Z_a^\xi | \mathcal{F}_a] + \E[  (Z_t^\xi - Z_a^\xi)^2 | \mathcal{F}_a]dt\\
\label{conditional-id-1} \geq & \int_a^b \int_a^t \xi^2(t-u) du  dt = \int_0^{b-a} (b-a-v) \xi^2(v)  dv,
\end{align}
where we have used It\^o isometry and the fact that $Z_t^\xi - Z_a^\xi$ is independent of $\mathcal{F}_a$.

By isometry, $\int_0^{\infty}\xi^2(v) dv = \E[Z_0^2]=1$, so there is $T_0>0$ such that $\int_0^{\frac{T_0}{2m}}\xi^2(v)dv \geq \frac{1}{2}$. Thus, by~\eqref{conditional-id-1}, for every $T \geq T_0$,
\begin{eqnarray}
\label{ineq-3}\E \left[\frac{1}{T}\int_{(k-1)T/m}^{kT/m}(Z_t^{\xi})^2dt \mid\mathcal{F}_{(k-1)T/m}\right] \geq \frac{1}{2m} \int_0^{\frac{T}{2m}} \xi^2(v) dv \geq \frac{1}{4m}.
\end{eqnarray}

Therefore, combining~\eqref{ineq-1},~\eqref{ineq-2} and~\eqref{ineq-3}, we obtain for every $T\geq T_0$,
\begin{eqnarray}
\E\left[\left(\frac{1}{T}\int_{(k-1)T/m}^{kT/m}Z_t^2dt\right)^{-p/m}\Big| \mathcal{F}_{(k-1)T/m}\right] &\leq&1+ 4c m\int_1^{\infty}x^{-\frac{m}{2p}}dx.\label{ineq-4}
\end{eqnarray}
Hence,
\begin{eqnarray}
\gamma_{m,T_0} \coloneqq \sup_{\substack{T\geq T_0 \\ 1\leq k\leq m}} \E\left[\left(\frac{1}{T}\int_{(k-1)T/m}^{kT/m}Z_t^2dt\right)^{-p/m}\Big| \mathcal{F}_{(k-1)T/m}\right] <\infty.\label{ineq-5}
\end{eqnarray}
Consequently, it follows from~\eqref{ineq-0} and~\eqref{ineq-5} that, for all $T\geq T_0$,
\begin{align}
\notag \E \left[ \left(\frac{1}{T} \int_0^T Z_t^2 dt \right)^{-p} \right] \leq m^{-p} (\gamma_{m,T_0})^m<\infty,
\end{align}
which completes the proof of  the first part of~\eqref{average-cont}. To establish the second part of~\eqref{average-cont} one need only replace $Z_t$ by $Z_t - e^{\theta t}Z_0$ in the proof above. Indeed, the key inequalities~\eqref{ineq-0},~\eqref{ineq-1} and~\eqref{ineq-2} are the same (with the change $Z_t^\xi \to Z_t^\xi - e^{\theta t}Z_0^\xi$) and the equivalent of~\eqref{conditional-id-1} is
\begin{align}
\notag \E  \left[  \int_a^b (Z_t^\xi - e^{\theta t}Z_0^\xi)^2 dt \Big| \mathcal{F}_a \right]  \geq & \int_a^b 2 (Z_a^\xi - e^{\theta t} Z_0^\xi) \E[ Z_t^\xi - Z_a^\xi | \mathcal{F}_a] + \E[  (Z_t^\xi - Z_a^\xi)^2 | \mathcal{F}_a]dt\\
\label{conditional-id-2}   \geq & \int_a^b \int_a^t \xi^2(t-u) du  dt = \int_0^{b-a} (b-a-v) \xi^2(v)  dv,
\end{align}

Now, let us prove the discrete version~\eqref{average-disc}. First we prove it for the case $n=m^2$. Define, for every $m \geq 1$, $T_m\coloneqq m^2\Delta_m$ such that $\Delta_m\rightarrow0$, and $\frac{T}{m}=m\Delta_m\rightarrow\infty$ as $m\rightarrow\infty$. Fix $p>0$, and let $m_0$ be a positive integer such that, for every $m\geq m_0$, $m > 2p$ and $\int_0^{\frac{T_m}{m}}\xi^2(v) dv \geq \frac{1}{2}$. We have $t_i = i \Delta_m$ for $i = 1, \ldots, m$. Write
\begin{eqnarray*}
\frac{1}{m^2} \sum_{i =1}^{m^2} Z_{t_{i}}^{2} &=&\frac{1}{T_m}\int_0^{T_m} Y_t dt,
\end{eqnarray*}
where $Y_t \coloneqq \sum_{i =1}^{m^2} Z_{t_{i}} 1_{(t_{i-1},t_{i}]}(t)$. Also, denote $Y_t^\xi    \coloneqq  \sum_{i =1}^{m^2} Z_{t_{i}}^\xi 1_{(t_{i-1},t_{i}]}(t)$.

We follow the same techniques as in the proof of~\eqref{average-cont}. Notice that the inequalities~\eqref{ineq-0}-\eqref{conditional-id-1} hold with $Y_t^\xi$ instead of $Z_t^\xi$.

Thus, it suffices to prove the following equivalent of~\eqref{ineq-3}: for every $m\geq m_0$,
\begin{eqnarray*}
\E\left[\frac{1}{T_m}\int_{(k-1)T_m/m}^{kT_m/m}(Y_t^{\xi})^2dt \Big| \mathcal{F}_{(k-1)T_m/m}\right] \geq  \frac{1}{2m^2}.
\end{eqnarray*}
Notice that for every $k=1, \ldots, m$,
\begin{eqnarray*}
 \frac{1}{T_m}\int_{(k-1)T_m/m}^{kT/m}(Y_t^{\xi})^2dt =  \frac{1}{m^2}\sum_{j  =1}^{m}(Z_{t_{(k-1)m+j}}^{\xi})^2.
\end{eqnarray*}
Moreover,
\begin{eqnarray*}
\E\left[(Z_{t_{(k-1)m+j}}^{\xi})^2 \Big| \mathcal{F}_{t_{(k-1)m}}\right] \geq \int_{t_{(k-1)m}}^{t_{(k-1)m+j}}\xi^2(t_{(k-1)m+j}-u)du = \int_0^{t_{j}}\xi^2(v)dv.
\end{eqnarray*}
Therefore, for every $m\geq m_0$,
\begin{eqnarray*}
\E\left[\frac{1}{T_m}\int_{(k-1)T_m/m}^{kT_m/m}(Y_t^{\xi})^2dt \mid\mathcal{F}_{(k-1)T_m/m}\right]&\geq& \frac{1}{m^2}\sum_{j  =1}^{m}\int_0^{t_{j}}\xi^2(v)dv \geq \frac{1}{m^2} \int_0^{t_{m}}\xi^2(v)dv  \\
 &=& \frac{1}{m^2} \int_0^{\frac{T}{m}}\xi^2(v)dv \geq \frac{1}{2m^2},
\end{eqnarray*}
  which yields the proof of~\eqref{average-disc} for $n=m^2$.

 For general $n$ a simple computation yields
\begin{align}
\frac{1}{n}\sum_{i =1}^{n} Z_{t_{i}}^{2} \geq \frac{1}{n} \sum_{i =1}^{\lfloor \sqrt{n} \rfloor^2} Z_{t_{i}}^{2} \geq C \frac{1}{\lfloor \sqrt{n} \rfloor^2}  \sum_{i =1}^{\lfloor \sqrt{n} \rfloor^2} Z_{t_{i}}^{2},
\end{align}
for some absolute constant $C > 0$, and thus the first part of~\eqref{average-disc} is established. The second part follows using the same techniques as above and~\eqref{conditional-id-2}.

\end{proof}

\end{document}